\newcommand{\nat}{\ensuremath {\mathbb N} }
\newcommand{\Prob}{\mathbb{P}}
\newcommand{\eps}{\varepsilon}
\newcommand{\E}{\mathbb E}
\newcommand{\aas}{{\sl{a.a.s.}}}
\newcommand{\bin}{\mathrm{Bin}}
\newtheorem{theorem}{Theorem}[section]
\newtheorem{lemma}[theorem]{Lemma}
\newtheorem{cor}[theorem]{Corollary}
\author{
Lenar Iskhakov\thanks{Advanced Combinatorics and Network Applications Lab, Moscow Institute of Physics and Technology, Moscow, Russia} %\\ \texttt{lenar-iskhakov@yandex.ru}
\and
Bogumi\l{} Kami\'nski\thanks{SGH Warsaw School of Economics, Warsaw, Poland} %\\ \texttt{bogumil.kaminski@gmail.com}
\and 
Maksim Mironov\footnotemark[1] %\\ \texttt{maxim-m94@mail.ru}
\and
Pawe\l{}~Pra\l{}at\thanks{Department of Mathematics, Ryerson University, Toronto, ON, Canada}\thanks{Corresponding author} %\\ \texttt{pralat@ryerson.ca}
\and 
Liudmila Prokhorenkova\footnotemark[1] \thanks{Machine intelligence and research department, Yandex, Moscow, Russia} %\\ \texttt{ostroumova-la@yandex.ru}
}
\title{Local Clustering Coefficient of Spatial Preferential Attachment Model}
\date{}
\begin{document}

\maketitle

\begin{abstract}
In this paper, we study the clustering properties of the Spatial Preferential Attachment (SPA) model. This model naturally combines geometry and preferential attachment using the notion of spheres of influence. It was previously shown in several research papers that graphs generated by the SPA model are similar to real-world networks in many aspects. Also, this model was successfully used for several practical applications. However, the clustering properties of the SPA model were not fully analyzed. The clustering coefficient is an important characteristic of complex networks which is tightly connected with its community structure. In the current paper, we study the behaviour of $C(d)$, which is the average local clustering coefficient for the vertices of degree $d$. It was empirically shown that in real-world networks $C(d)$ usually decreases as $d^{-a}$ for some $a>0$ and it was often observed that $a=1$. We prove that in the SPA model $C(d)$ decreases as $1/d$. Furthermore, we are also able to prove that not only the average but the individual local clustering coefficient of a vertex $v$ of degree $d$ behaves as $1/d$ if $d$ is large enough. The obtained results further confirm the suitability of the SPA model for fitting various real-world complex networks.
\end{abstract}

\begin{keywords}
complex networks; spatial preferential attachment; local clustering coefficient
\end{keywords}

\section{Introduction}

The evolution of complex networks has attracted a lot of attention in recent years. Empirical studies of different real-world networks have shown that such networks have some typical properties: small diameter, power-law degree distribution, clustering structure, and others~\cite{costa2007characterization,newman2003structure,thadakamalla2015}.
Therefore, numerous random graph models have been proposed to reflect and predict such quantitative and topological aspects of growing real-world networks~\cite{boccaletti2006complex,bollobas2003mathematical}.

The most well studied property of complex networks is their vertex degree distribution. For the majority of studied real-world networks, the degree distribution follows a power law with a parameter $\gamma$ which usually belongs to $(2,3)$~ \cite{barabasi1999emergence,faloutsos1999power,newman2005power}.

Another important property of real-world networks is their clustering (or community) structure. The presence of such a structure highly affects many processes occurring in networks like promotion of products via viral marketing or the spreading of computer viruses and infection diseases~\cite{prokhorenkova2019learning}. One way to characterize the presence of clustering structure is to measure the {\it clustering coefficient}, which is, roughly speaking, the probability that two neighbours of a vertex are connected. There are two well-known formal definitions: the global clustering coefficient and the average local clustering coefficient (see Section~\ref{sec:clustering} for definitions). At some point, it was believed that for many real-world networks both the average local and the global clustering coefficients tend to non-zero limit as the network becomes large; for example, some numerical values can be found in~\cite{newman2003structure}; however, this statement for the global clustering coefficient is questionable and recently some contradicting theoretical results were presented in~\cite{ostroumova2016global}.

In this paper, we mostly focus on the behaviour of $C(d)$, which is the average local clustering coefficient for the vertices of degree $d$. The function $C(d)$ gives a better insight into the network structure than just the average clustering coefficient. It was empirically shown that in real-world networks $C(d)$ usually decreases as $d^{-\psi}$ for some $\psi>0$~\cite{csanyi2004structure, leskovec2008dynamics, serrano2006clustering1, vazquez2002large}. In particular, for many studied networks, $C(d)$ scales as $d^{-1}$~\cite{ravasz2003hierarchical}. Moreover, it was shown in~\cite{serrano2006clustering1,serrano2006clustering2} that the behaviour of $C(d)$ is tightly connected to the notion of weak and strong transitivity, which, in turn, affects percolation properties of a network.

We study the clustering properties of the \emph{Spatial Preferential Attachment} (SPA) model introduced in~\cite{spa1}. This model combines geometry and preferential attachment; %Setting the SPA model apart is the incorporation of `spheres of influence' to accomplish preferential attachment: the greater the degree of a vertex, the larger its sphere of influence, and hence the higher the likelihood of the vertex gaining more neighbors. 
the formal definition is given in Section~\ref{sec:spa_def}. It was previously shown that graphs generated by the SPA model are similar to real-world networks in many aspects.  For example, it was proven in \cite{spa1} that the vertex degree distribution follows a power law. Also, this model was successfully used for several practical applications like the analysis of a duopoly market~\cite{SPAKaminski2017}. More details on the properties and applications of the SPA model are given in Section~\ref{sec:spa_prop}. However, the clustering coefficient $C(d)$, which is an extremely important characteristic, was not previously analyzed for this model. Some basic clustering properties of the closely related model were analyzed: it is proved in \cite{jacob2013spatial} and \cite{jacob2015spatial} that the average local clustering coefficient converges in probability to a strictly positive limit; also, the global clustering coefficient converges to a nonnegative limit, which is nonzero if and only if the power-law degree distribution has a finite variance.

A short version of this article was published as a conference proceeding~\cite{iskhakov2018clustering} and was mostly focused on the behaviour of the clustering coefficient on simulated graphs. In the current paper we provide a thorough theoretical analysis of the asymptotic properties of the SPA model. However, we also include some empirical results from \cite{iskhakov2018clustering} to illustrate the obtained theoretical counterparts.

The rest of the paper is organized as follows. In the next section we define the SPA model and  discuss its properties. Then, in Section~\ref{sec:clustering}, we define the local clustering coefficient. The obtained theoretical results are discussed in Section~\ref{sec:results} and illustrated on simulated graphs in Section~\ref{sec:simulations}. All proofs are given in Section~\ref{sec:proofs}. Section~\ref{sec:conclusion} concludes the paper.

\section{Spatial Preferential Attachment model}\label{sec:spa}

\subsection{Definition}\label{sec:spa_def}

This paper focuses on the \emph{Spatial Preferential Attachment} (SPA) model, which was first introduced in~\cite{spa1}. This model combines preferential attachment with geometry by introducing ``spheres of influence'' whose volume grows with the degree of a vertex. The parameters of the model are the \emph{link probability} $p\in[0,1]$ and two constants $A_1,A_2$ such that $0 < A_1 < \frac{1}{p}$, $A_2>0$. All vertices are placed in the $m$-dimensional unit hypercube $S = [0,1]^m$ equipped with the torus metric derived from any of the $L_k$ norms, i.e.,  
\begin{equation}
d(x,y)=\min \big\{ ||x-y+u||_k\,:\,u\in \{-1,0,1\}^m \big\} \,\,\,\,\,\,\, \forall  x,y \in S \,.
\end{equation}
The SPA model generates a sequences random directed graphs $\{G_{t}\}$, where $G_{t}=(V_{t},E_{t})$, $V_{t}\subseteq S$. Let $\deg^{-}(v,t)$ be the in-degree of the vertex $v$ in $ G_{t}$, and $\deg^+(v,t)$ its out-degree. Then, the \emph{sphere of influence} $S(v,t)$ of the vertex $v$ at time $t\geq 1$ is the ball centered at $v$ with the following volume:
\begin{equation}
|S(v,t)|=\min\left\{\frac{A_1{\deg}^{-}(v,t)+A_2}{t},1\right\}.
\end{equation}

In order to construct a sequence of graphs we start at $t=0$ with $G_0$ being the null graph. At each time step $t$ we construct $G_{t}$ from $G_{t-1}$ by, first, choosing a new vertex $v_t$ \emph{uniformly at random} from $S$ and adding it to $V_{t-1}$ to create $V_{t}$. Then, independently, for each vertex $u\in V_{t-1}$ such that $v_t \in S(u,t-1)$, a directed link $(v_{t},u)$ is created with probability $p$. Thus, the probability that a link $(v_t,u)$ is added in time-step $t$ equals $p\,|S(u,t-1)|$.
See Figure~\ref{fig1} for a drawing of a simulation of the SPA model~\cite{spa1}.

\begin{figure} [ht] \label{fig1}
\begin{center}
\includegraphics[width=0.6\textwidth]{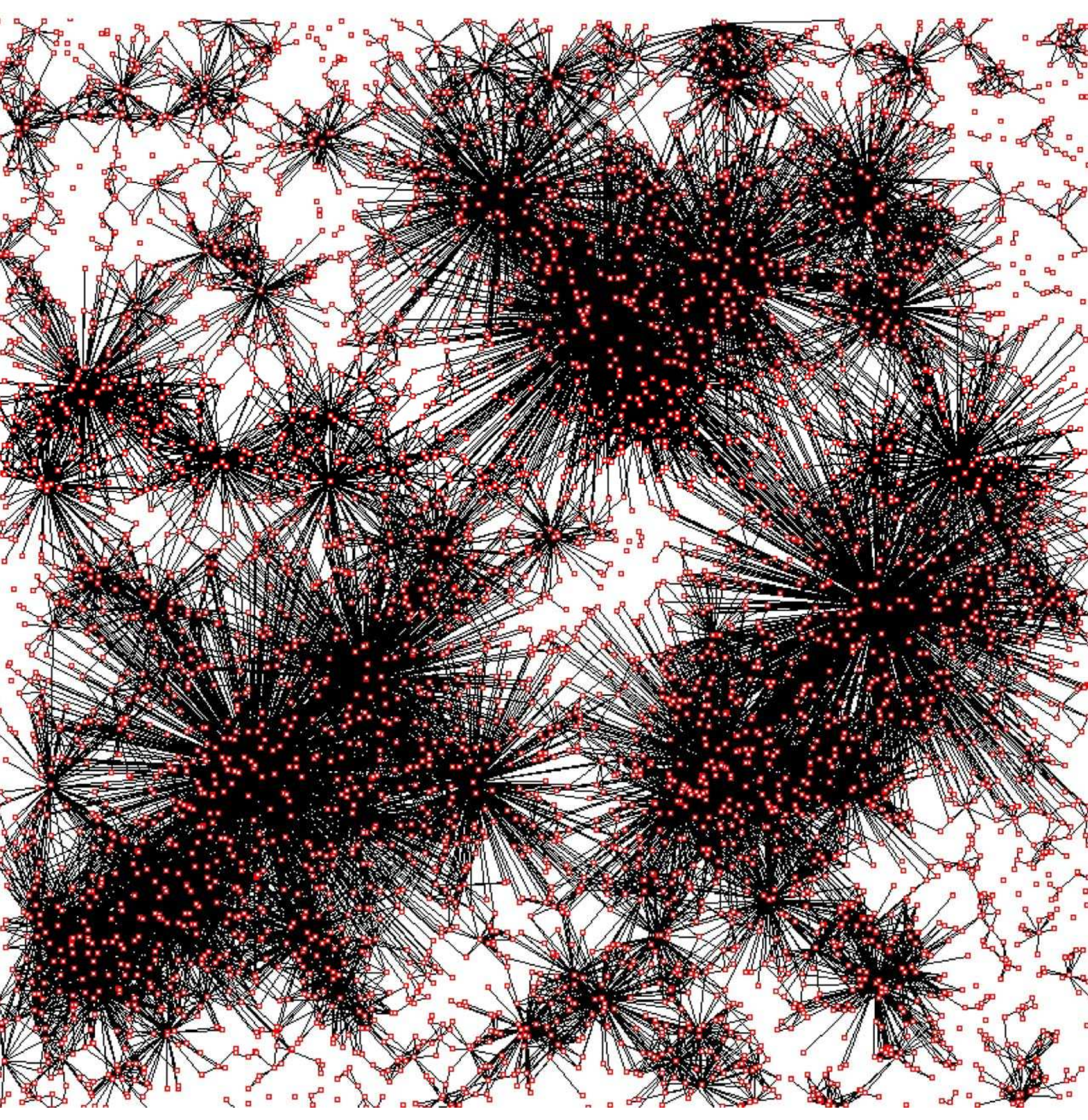}\caption{A simulation on the unit square with $t=5000$, $p=1$, and $A_1=A_2=1.$}
\end{center}
\end{figure}

\subsection{Properties and applications of the model}\label{sec:spa_prop}

In this section, we briefly discuss previous studies on properties and applications of the SPA model. This model is known to produce scale-free networks, which exhibit many of the characteristics of real-life networks \cite[see][]{spa1,spa2}. 
Specifically,~\cite{spa1} (Theorem~1.1) proved that the SPA model generates graphs with a power law in-degree distribution with coefficient $1 + 1/(pA_1)$. On the other hand, the average out-degree is asymptotic to $pA_2/(1-pA_1)$ \cite[see Theorem~1.3 in][]{spa1}.
In~\cite{spa4}, some properties of common neighbours were used to explore the underlying geometry of the SPA model and quantify vertex similarity based on the distance in the space.
Usually, the distribution of vertices in $S$ is assumed to be uniform~\cite{spa4}, but~\cite{spa5} also investigated non-uniform distributions, which is clearly a more realistic setting. 

Let us briefly discuss the parameters of the model: $p, A_1$, and $A_2$. The parameter $p$ is usually highly influenced by the application; for example, if one wants to model the citation network, $p$ would correspond to the ratio of the average number of papers cited and the number of papers a typical author is aware of (presumably larger for, say, computer science papers; lower for mathematics, etc.). Then, $A_1$ controls the degree distribution and $A_2$ can be used to tune the average degree.

Importantly, in~\cite{spa3}, it was shown that the SPA model gave the best fit, in terms of graph structure, for a series of social networks derived from Facebook. This means that this model is a good synthetic approximation for some real-world networks and can be suitable for various practical applications. For instance, the SPA model was used to analyze a duopoly market on which there is uncertainty of a product quality~\cite{SPAKaminski2017} and to model the interpersonal network of top managers~\cite{morgan2018cognition}. Also, \cite{feldman2017high} used the SPA model to study the interaction between community structure and the spread of infections in complex networks. Fitting the SPA model to real-world networks can also potentially be used for link prediction problems. The model is especially useful when some underlying structure that affects the network is not easily measurable but is important for the application. Consider, for example, a citation network in which vertices correspond to papers and directed edges correspond to citations between them. Clearly, the content of the paper (that is, location of the corresponding vertex in $S$) affects edges between vertices but very often the content is not provided. Assuming that the citation network is similar to the SPA model (and after a careful tuning of parameters), one can use the model and results from~\cite{spa4,spa5} to predict the similarity between papers and then use some clustering algorithm to extract papers on a similar topic. The same methodology applies to a more sophisticated scenarios such as predicting people's taste/hobbies/believes based on social networks they are part of. 

As we already mentioned, one of the most important properties of real-world networks is their clustering structure: it highly affects many processes occurring in networks. Therefore, to understand the suitability of the SPA model for various applications, it is crucial to analyze its clustering structure. The first step in this direction was made in~\cite{prokhorenkova2017modularity,modularity}, where modularity of the SPA model was investigated, which is a global criterion to define communities and a way to measure the presence of community structure in a network. In the current paper we analyze the clustering structure using another characteristic~--- the average local clustering coefficient $C(d)$.

\section{Clustering coefficient}\label{sec:clustering}

Clustering coefficient measures how likely two neighbours of a vertex are connected by an edge.
There are several definitions of clustering coefficient proposed in the literature (see, e.g.,~\cite{bollobas2003mathematical}). 

The {\it global clustering coefficient} $C_{glob}(G)$ of a graph $G$ is the ratio of three times the number of triangles to the number of pairs of adjacent edges in $G$. 
In other worlds, if we sample a random pair of adjacent vertices in $G$, then $C_{glob}(G)$ is the probability that these three vertices form a triangle. The global clustering coefficient in the SPA model was previously studied in~\cite{jacob2013spatial, jacob2015spatial} and it was proven that $C_{glob}(G_n)$ converges to a  limit, which is positive if and only if the power-law degree distribution has a finite variance.

In this paper, we focus on the {\it local clustering coefficient}. 
Let us first define it for an undirected graph $G = (V, E)$.
%Consider $G_{n}=(V_{n},E_{n})$, a graph generated by the SPA model. As clustering coefficients are defined for undirected graphs, we consider $\hat{G}_n$ that is a graph obtained from $G_n$ by replacing each directed edge $(u,v)$ by undirected edge $uv$. (As edges in $G_n$ are always from `younger' to `older' vertices, there is no problem with generating multigraph; $\hat{G}_n$ is a simple graph.)
Let $N(v)$ be the set of neighbours of a vertex $v$, $|N(v)| = \deg(v)$. For any $B \subseteq V$, let $E(B)$ be the set of edges in the graph induced by the vertex set $B$; that is,
\begin{equation}
E(B) = \{ \{ u,w\}  \in E : u, w \in B \}.  
\end{equation}
Finally, \emph{clustering coefficient} of a vertex $v$ is defined as follows:
\begin{equation}
c(v) = |E(N(v))| \Big/ \binom{\deg(v)}{2}.
\end{equation}
Clearly, $0 \le c(v) \le 1$.

Note that the local clustering $c(v)$ is defined individually for each vertex and it can be noisy, especially for the vertices of not too large degrees. Therefore, the following characteristic was extensively studied in the literature. Let $C(d)$ be the local clustering coefficient averaged over the vertices of degree $d$; that is,
\begin{equation}
C(d) = \frac{\sum_{v: \deg(v) = d}c(v)}{|\{v: \deg(v) = d\}|}\,.
\end{equation}
Further in the paper we will also use the notation $c(v,t)$ and $C(d,t)$ referring to graphs on $t$ vertices.

The local clustering $C(d)$ was extensively studied both theoretically and empirically. 
For example, it was observed in a series of papers that in real-world networks $C(d) \propto d^{-\varphi}$ for some $\varphi > 0$.
In particular, \cite{ravasz2003hierarchical} shows that $C(d)$ can be well approximated by $d^{-1}$ for four large networks, \cite{vazquez2002large} obtains power-law in a real network with parameter 0.75, while \cite{csanyi2004structure} obtain $\varphi = 0.33$.
The local clustering coefficient was also studied in several random graph models of complex networks. For instance, it was shown in \cite{dorogovtsev2002pseudofractal, krot2017local, newman2003properties} that some models have $C(d) \propto d^{-1}$. As we prove in this paper, similar behaviour is observed in the SPA model.

Recall that the graph $G_t$ constructed according to the SPA model is directed. Therefore, we first analyze the directed version of the local clustering coefficient and then, as a corollary, we obtain the corresponding results for the undirected version. Let us now define the directed clustering coefficient. By $N^-(v,t) \subseteq V_t$ we denote the set of in-neighbours of a vertex $v$ at time $t$; $\deg^-(v,t) = |N^-(v,t)|$. So, the directed clustering coefficient of vertex $v$ at time $t$ is defined as
\begin{equation}
c^{-}(v,t) = |E(N^-(v,t))| \Big/ \binom{\deg^-(v,t)}{2},
\end{equation}
where this time $E(B) = \{ (u,w) \in E : u, w \in B \}$ for any $B \subseteq V_t$.
Similarly to the undirected case, we define
\begin{equation}
C^{-}(d,t) = \frac{\sum_{v: \deg^-(v,t) = d}c^-(v,t)}{|\{v: \deg^-(v,t) = d\}|}\,.
\end{equation}

\section{Results}\label{sec:results}

Let us start with introducing some notation. As typical in random graph theory, all results in this paper are asymptotic in nature; that is, we aim to investigate properties of $G_n$ for $n$ tending to infinity. As a result, we will be always allowed to assume that $n$ is large enough for some inequalities/properties to hold. We say that an event holds \emph{asymptotically almost surely} (\aas) if it holds with probability tending to one as $n\to\infty$. Also, given a set $S$ we say that \emph{almost all} elements of $S$ have some property $P$ if the number of elements of $S$ that do not have $P$ is $o(|S|)$ as $n \to \infty$. Throughout the paper, the standard notations $o(\cdot)$, $O(\cdot)$, $\Theta(\cdot)$, $\Omega(\cdot)$ refer to functions whose growth is bounded as $n \to \infty$. We use the notation $f \ll g$ for $f=o(g)$ and $f \gg g$ for $g=o(f)$. We also write $f \sim g$ if $f/g \to 1$ as $n \to \infty$ (that is, when $f = (1+o(1)) g$). 
%When we make probabilistic statements, we say that $f \sim g$ with probability at least $p(n)$ if there exists a function $\varepsilon(n)=o(1)$ such that $g (1-\varepsilon) < f < g(1+\varepsilon)$ with probability at least $p(n)$, provided that $n$ is large enough. 
Let us illustrate how we combine this notation in probabilistic statements. For example, we say that $f \sim g$ with probability $1-o(p(n))$ if there exists a function $\varepsilon=\varepsilon(n)=o(1)$ such that $g (1-\varepsilon) < f < g(1+\varepsilon)$ with probability at least $1-\varepsilon p(n)$ (of course, provided that $n$ is large enough). 
Finally, let us emphasize that $f$ and $g$ are functions of $n$ and possibly some other variables, but all asymptotic relations presented in this paper are for the number of vertices $n$ tending to infinity.

%Finally, we emphasize that the notations $o(\cdot)$ and $O(\cdot)$ refer to functions of $n$, not necessarily positive, whose growth is bounded. We use the notations $f \ll g$ for $f=o(g)$ and $f \gg g$ for $g=o(f)$. We also write $f(n) \sim g(n)$ if $f(n)/g(n) \to 1$ as $n \to \infty$ (that is, when $f(n) = (1+o(1)) g(n)$). 

Let us first consider the directed clustering coefficient. It turns out that for the SPA model we are able not only to prove the asymptotics for $C^-(d,n)$, which is the average clustering over all vertices of in-degree $d$, but also analyze the individual clustering coefficients $c^-(v,n)$, which is a much stronger result. However, in order to do this, we need to assume that $\deg^-(v,n)$ is large enough.

From technical point of view, it will be convenient to partition the set of contributing edges, $E(N^-(v,n))$, and independently consider edges to ``old'' and to ``young'' neighbours of $v$. Formally, let us take any function $\omega(n)$ that tends to infinity (arbitrarily slowly) as $n\to \infty$; for example, $\omega(n) = \log \log \log n$ or even $\omega(n)$ could be the inverse Ackermann function that is less than 5 for any practical input size $n$. The function $\omega(n)$ will remain fixed throughout the rest of the paper. Let $\hat{T}_v$ be the smallest integer $t$ such that $\deg^-(v,t)$ exceeds $\omega \log n$ (or $\hat{T}_v=n$ if $\deg^-(v,n) < \omega \log n$). The reason for introducing $\hat{T}_v$ is that the behaviour of vertices is chaotic and unpredictable at first but it stabilizes once they accumulate enough neighbours; the threshold happens to be around $\log n$. Vertices in $N^-(v,\hat{T}_v)$ are called \emph{old neighbours of $v$}; $N^-(v,n) \setminus N^-(v,\hat{T}_v)$ are \emph{new neighbours of $v$}. So, we can partition $E(N^-(v,n))$ into $E_{old}(N^-(v,n))$ and $E_{new}(N^-(v,n))$, the first contains the edges going from neighbours of $v$ to its old neighbours, the second contains the remaining edges, i.e., ones connecting only new neighbours. Again, the reason for partitioning $E(N^-(v,n))$ is that old neighbours of $v$ are unpredictable (but, fortunately, there are few of them); on the other hand, the behaviour of young neighbours can be well understood. Formally,
$$
E_{old}(N^-(v,n)) = \{ (u,w) \in E_n : u \in N^-(v,n), w \in N^-(v,\hat{T}_v) \},
$$
$$
E_{new}(N^-(v,n)) = E(N^-(v,n)) \setminus E_{old}(N^-(v,n))\,;
$$
and
\begin{equation}
c^-(v,n) = c_{old}(v,n) + c_{new}(v,n),\label{eq:old_new}
\end{equation}
where
\begin{eqnarray*}
c_{old}(v,n) &=& |E_{old}(N^-(v,n))| \Big/ \binom{\deg^-(v,n)}{2}, \\
c_{new}(v,n) &=& |E_{new}(N^-(v,n))| \Big/ \binom{\deg^-(v,n)}{2}.
\end{eqnarray*}

\bigskip

Let us start with the following theorem which is extensively used in our reasonings and is interesting and important on its own. Variants of this results were proved in~\cite{spa4, spa5}; here, we present a slightly modified statement from~\cite{spa5}, adjusted to our current needs. We provide the proof in Section~\ref{sec:proof-degconc} for completeness. 

\begin{theorem}\label{degconc}
Let $\omega=\omega(n)$ be any function tending to infinity together with $n$. The following holds with probability $1-o(n^{-4})$. For any vertex $v$ with
$$
\deg^-(v,n)=k=k(n) \geq \omega \log n
$$
and for all values of $t$ such that 
\begin{equation}
n \left(\frac{\omega \log n}{k}\right)^{\frac{1}{p A_1}} =: T_v \le t \le n,
\end{equation}
we have
\begin{equation}
\deg^-( v,t) \sim k \left(\frac{t}{n}\right)^{p A_1}.
\end{equation}
\end{theorem}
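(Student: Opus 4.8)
The plan is to track the evolution of $\deg^-(v,t)$ using a martingale/concentration argument. Fix a vertex $v$ and recall that, conditional on the history $G_{t-1}$, the in-degree increases by one at step $t$ with probability exactly $p\,|S(v,t-1)| = p(A_1\deg^-(v,t-1)+A_2)/(t-1)$ as long as the sphere has not saturated the whole space (which it will not, for the ranges of $t$ we care about, since $\deg^-(v,t) \ll t$). Thus
\begin{equation}
\E\big[\deg^-(v,t) \mid G_{t-1}\big] = \deg^-(v,t-1)\left(1 + \frac{pA_1}{t-1}\right) + \frac{pA_2}{t-1}.
\end{equation}
The natural thing is to define the normalized quantity $Z_t = (\deg^-(v,t) + A_2/A_1)\big/\prod_{i=T_v}^{t-1}(1+pA_1/i)$ (or a close variant), check that $\{Z_t\}$ is a martingale, and note that $\prod_{i=T_v}^{t-1}(1+pA_1/i) \sim (t/T_v)^{pA_1}$, so that the deterministic ``trend'' of $\deg^-(v,t)$ starting from time $T_v$ is $\asymp (t/T_v)^{pA_1}$ times its value at $T_v$. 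Running this both forward from $T_v$ to $n$ and backward (or simply pinning the value at $t=n$ to be $k$), one expects $\deg^-(v,t) \sim k(t/n)^{pA_1}$.

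First I would set up the supermartingale/submartingale pair bounding $\deg^-(v,t)$ from above and below — since the $+pA_2/(t-1)$ additive term is lower-order once $\deg^-$ is already of size $\omega\log n$, it only perturbs the multiplicative trend by a $1+o(1)$ factor over the whole range $[T_v,n]$. Then I would apply a concentration inequality for martingales with bounded or controllable increments; the increments of $\deg^-$ are $0/1$, so a Freedman-type inequality or a careful Azuma/Bernstein argument on the logarithm of the normalized process should give that, with probability $1-o(n^{-5})$ say, $\deg^-(v,t)$ stays within a $(1\pm o(1))$ factor of its trend simultaneously for all $t$ in a suitable dyadic net of times; monotonicity of $\deg^-(v,t)$ in $t$ then upgrades this to all $t$. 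The key point making the error term $o(1)$ rather than $O(1)$ is precisely the hypothesis $k \ge \omega\log n$: the relative fluctuation of a sum of $\asymp \deg^-(v,t)$ roughly-independent increments is $\asymp 1/\sqrt{\deg^-(v,t)} \le 1/\sqrt{\omega\log n} = o(1)$, and the $\log n$ coming from the union bound over vertices and over the time-net is absorbed because $\omega \to \infty$.

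Finally I would handle the union bound over all $n$ vertices and over all possible target degrees $k$ (or equivalently over the $O(\log n)$ relevant dyadic scales of $k$ and the $O(\log n)$ dyadic scales of $t$), so that the per-vertex failure probability $o(n^{-5})$ becomes a global $o(n^{-4})$. The main obstacle I anticipate is the two-sided nature of the claim near the \emph{left} endpoint $t=T_v$: there $\deg^-(v,t)$ is only of order $\omega\log n$, which is the smallest scale at which the concentration is still strong enough, so the constants in the Bernstein bound have to be tracked with some care to confirm that the relative error is genuinely $o(1)$ and not merely $O(1)$, and one must be careful that the definition of $T_v$ (via the $1/(pA_1)$-th power) correctly matches the inverse of the trend $(t/n)^{pA_1}$ evaluated at degree $\omega\log n$. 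I expect this to be a somewhat delicate but ultimately routine computation, of the same flavour as the arguments already carried out in~\cite{spa4,spa5}.
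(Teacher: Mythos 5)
Your overall strategy --- compare $\deg^-(v,t)$ to the deterministic trend $(t/T)^{pA_1}$, concentrate the $0/1$ increments via a Chernoff/martingale inequality so the relative error is of order $\sqrt{\log n/\deg^-(v,t)}\le 1/\sqrt{\omega}=o(1)$, and union bound over the $n$ vertices and over $O(\log n)$ time scales --- is exactly the paper's. The paper implements the per-window concentration by coupling the increments with independent Bernoulli variables driven by an upper-envelope degree and applying a Chernoff bound on doubling windows $[T,2T]$ (Lemma~\ref{thm:conc}), which is the same mechanism as your normalized-martingale/Freedman variant.

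Two points in your sketch would not survive as literally written, and they are precisely where the paper's proof does something specific. First, the anchoring: you propose to run the argument ``forward from $T_v$'' or to ``pin the value at $t=n$ to be $k$''. Neither is directly legitimate: $T_v$ is defined through the final degree $k$, so $\deg^-(v,T_v)\approx\omega\log n$ is part of what must be proved, not an available initial condition, and conditioning on $\deg^-(v,n)=k$ destroys the martingale/independence structure your concentration inequality needs. The paper instead anchors at the stopping time $T$ at which the in-degree first exceeds $(\omega/2)\log n$ (its value there is known, since in-degrees grow by unit steps), proves forward concentration on the windows $[2^iT,2^{i+1}T]$ with multiplicative errors $1+O\bigl(\sqrt{d_i^{-1}\log n}\bigr)$ whose product is $1+o(1)$ because $d_i$ grows geometrically, and only at the end evaluates the trend at $t=n$ to deduce $T\sim 2^{-1/(pA_1)}T_v<T_v$, so the interval $[T_v,n]$ is covered and the trend can be rewritten as $k(t/n)^{pA_1}$. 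Second, a dyadic net plus monotonicity of $\deg^-(v,\cdot)$ only yields constant-factor control between net points, not $1\pm o(1)$; you would need a net of ratio $1+o(1)$, or, as the paper does, a union bound over every individual time $t$ inside each window, which is affordable because each single-time failure probability is $o(n^{-7})$. With these repairs your route coincides with the paper's.
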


The expression for $T_v=T_v(n)$ is chosen so that at time $T_v$ vertex $v$ has $(1+o(1)) \omega \log n$ neighbours \aas\ The implication of this theorem is that once a vertex accumulates $\omega \log n$ neighbours, its behaviour can be predicted with high probability until the end of the process (that is, till time $n$) when its degree reaches $k$. In other words the following property holds. For a fixed value of $n$, let $M=M(n)$ and $m=m(n)$ be the maximum and, respectively, the minimum ratio between $\deg^-(v,t)$ and the deterministic function $k(t/n)^{pA_1}$ (taken over the interval $T_v \le t \le n$). Then, \aas\ both $M$ and $m$ tend to one as $n \to \infty$.

\bigskip

This property can be used to show that the contribution to $c^-(v,n)$ coming from edges to new neighbours of $v$ is well concentrated.

\begin{theorem}\label{thm:c_new}
Let $\omega=\omega(n)$ be any function tending to infinity together with $n$. Then, with probability $1-o(n^{-1})$ for any vertex $v$ with 
$$
\deg^-(v,n)=k=k(n) \geq (\omega \log n)^{4 + (4pA_1+2)/(pA_1(1-pA_1))}
$$
we have
$$
c_{new}(v,n) = \Theta(1/k).
$$
\end{theorem}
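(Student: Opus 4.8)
The plan is to rewrite $|E_{new}(N^-(v,n))|$ as a sum over time steps, compute its expectation with the help of Theorem~\ref{degconc}, and then prove concentration by a martingale argument that exploits the fact that most one-step contributions vanish.

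\textbf{Reformulation.} Since in the SPA model every edge points from the younger endpoint to the older one, an edge counted by $E_{new}(N^-(v,n))$ is a directed edge $(u,w)$ in which $u$ and $w$ are new in-neighbours of $v$ and $u$ is younger than $w$; together with $(w,v)$ and $(u,v)$ it forms a directed triangle whose vertices $v,w,u$ are born in this order, with $w,u$ born after $\hat T_v$. Grouping these configurations by the youngest vertex $u=v_s$ gives $|E_{new}(N^-(v,n))|=\sum_{s=1}^{n}Y_s$ with
\[
Y_s=\mathbf 1[\,v_s\to v\,]\cdot\bigl|\{w:\ \hat T_v<s_w\le s-1,\ w\to v,\ v_s\to w\}\bigr|,
\]
where $s_w$ denotes the birth time of $w$. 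Since $\hat T_v$ is a stopping time it is convenient to condition on the history up to time $\hat T_v$, which freezes $\hat T_v$ and the set of old neighbours of $v$; on the event of Theorem~\ref{degconc} one moreover has $\hat T_v\sim T_v$, so the effective range of $s$ is known.

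\textbf{Expectation.} Conditioning on $\mathcal F_{s-1}$, the point $v_s$ is uniform in $S$, and attaching both to $v$ and to a given new in-neighbour $w$ forces $v_s\in S(v,s-1)\cap S(w,s-1)$, so $\E[Y_s\mid\mathcal F_{s-1}]=p^{2}\sum_{w}\bigl|S(v,s-1)\cap S(w,s-1)\bigr|$ over the new in-neighbours $w$ of $v$. To evaluate $\E[|E_{new}|]=\sum_s\E[Y_s]$ I would substitute $|S(v,t)|\sim A_1 k (t/n)^{pA_1}/t$ for $t\ge\hat T_v$ from Theorem~\ref{degconc}, use the standard SPA estimates for $\deg^-(w,t)$ to get $\E[|S(w,s-1)|]=\Theta\bigl(s_w^{-pA_1}s^{pA_1-1}\bigr)$, and bound the intersection volume geometrically by splitting according to whether $S(v,s-1)$ or $S(w,s-1)$ is the larger ball (a uniform point of the larger ball lies deep inside the smaller one with the appropriate probability). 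In the dominant regime a configuration with birth times $s_w<s_u$ contributes $\Theta\bigl(k\,n^{-pA_1}s_w^{-pA_1}s_u^{2pA_1-2}\bigr)$; because $pA_1<1$, the double sum $\sum_{\hat T_v\le s_w<s_u\le n}s_w^{-pA_1}s_u^{2pA_1-2}$ is dominated by large $s_u$ and equals $\Theta(n^{pA_1})$, for a total of $\Theta(k)$. The opposite regime requires $\deg^-(w,s_u-1)$ to exceed its mean by a factor $\Omega(\omega\log n)$, and since the in-degree distribution has exponent $1+1/(pA_1)>2$ (finite variance) it contributes a lower-order term. Hence $\E[|E_{new}(N^-(v,n))|]=\Theta(k)$, and as $\binom{\deg^-(v,n)}{2}=\Theta(k^2)$ this gives $\E[c_{new}(v,n)]=\Theta(1/k)$.

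\textbf{Concentration.} For the matching high-probability statement I would apply a martingale inequality to the Doob martingale $M_t=\E[\,|E_{new}(N^-(v,n))|\mid\mathcal F_t\,]$, $\hat T_v\le t\le n$. A one-step change at time $j$ affects $|E_{new}|$ only through $Y_j$ and through the future triangles created once $v_j$ becomes a new in-neighbour of $v$; on the good event of Theorem~\ref{degconc} (together with the standard bound on the maximum in-degree) each such increment is $O(\mathrm{polylog}\,n)$, and, crucially, is non-negligible only when $v_j\to v$, an event of probability $\Theta\bigl(k\,n^{-pA_1}j^{pA_1-1}\bigr)$. Therefore the predictable quadratic variation $\sum_j\mathrm{Var}(M_j-M_{j-1}\mid\mathcal F_{j-1})$ is only $O(k\cdot\mathrm{polylog}\,n)$ rather than $\Theta(n)$, so a Freedman/Bernstein-type martingale inequality gives $\Prob\bigl(\,|M_n-M_{\hat T_v}|>\eps\,\E|E_{new}|\,\bigr)\le\exp\bigl(-\Omega(k/\mathrm{polylog}\,n)\bigr)$. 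To make the increment bounds hold unconditionally I would replace $Y_s$ and the future contributions by truncated versions that agree with the originals on the good event, which fails with probability only $o(n^{-4})$. Finally, there are at most $n$ vertices with $\deg^-(v,n)\ge(\omega\log n)^{4+(4pA_1+2)/(pA_1(1-pA_1))}$, and a union bound over them turns the per-vertex failure probability into $o(n^{-1})$.

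\textbf{Main obstacle.} The expectation is routine once the geometry of sphere intersections is organised; the real difficulty is the concentration. Plain Azuma--Hoeffding is useless here because the target fluctuation $\eps k$ is far smaller than $\sqrt n$, so one genuinely must exploit the sparsity of the non-zero martingale increments (tied to the rare event $v_j\to v$) together with a Bernstein-type bound, and the precise lower bound on $k$ in the statement is exactly what makes the polylogarithmic factors from the increment sizes, the predictable variation, and the degree bounds negligible, i.e.\ $k/\mathrm{polylog}\,n\gg\log n$.
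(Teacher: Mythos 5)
Your expectation heuristic is consistent with the truth (and with the paper's answer $\Theta(k)$ for $|E_{new}(N^-(v,n))|$), but the concentration step has a genuine gap, and it sits exactly where the real work of the theorem lies. You claim that, on the good event of Theorem~\ref{degconc} together with the maximum-degree bounds, each Doob-martingale increment is $O(\mathrm{polylog}\,n)$, because an increment is non-negligible only when $v_j\to v$. That is not true: when a new in-neighbour $w=v_j$ happens to land very close to $v$ (say, inside the \emph{final} sphere of influence of $v$, of volume $\Theta(A_1k/n)$), its sphere of influence never separates from that of $v$, and the conditional expectation of its future common in-neighbours with $v$ is of order $\sum_{t>j}\deg^-(w,t)/t=\Theta((n/j)^{pA_1})$, which for $j$ near $\hat T_v$ is of order $k/(\omega\log n)$ — nearly the whole target $\Theta(k)$, not polylogarithmic. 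Neither Theorem~\ref{degconc} nor Corollary~\ref{cor:upper} excludes such landings (Corollary~\ref{cor:upper} even allows $\deg^-(w,n)$ of order $k$ for $w$ born near $\hat T_v$), so your truncation ``agreeing with the original on the good event of probability $1-o(n^{-4})$'' does not in fact agree with the original; after an honest truncation you must still show that the contribution of these close, potentially high-degree neighbours is $O(k)$, and that is precisely the hard part.

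This is exactly what the paper's proof is built to handle: it decomposes space around $v$ into dyadic balls $B_\ell$ of volume $b_\ell=A_1kt_\ell^{pA_1-1}n^{-pA_1}$, observes that a neighbour in $B_{\ell-1}\setminus B_\ell$ can contribute at most $\deg^-(w,t_{\ell+1})$ edges (after $t_{\ell+1}$ the two spheres are disjoint), and then controls, for each scale, how many vertices of each in-degree lie in $B_{\ell-1}$ via a separate concentration result (Theorem~\ref{thm:X_t}, a supermartingale argument for the in-ball degree sequence $N_{i,t}$), splitting off the few large balls and the few high-degree vertices by expectation-plus-Markov/martingale bounds. The specific lower bound on $k$ in the statement comes from the hypothesis $b_{\ell-1}t_{\ell+1}\gg i_f^{4+2/(pA_1)}\log^3 n$ needed for Theorem~\ref{thm:X_t}, not from making a Freedman exponent beat $\log n$ as you suggest. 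So the route you propose could perhaps be repaired, but only by importing an analysis of the close-by neighbours equivalent to the paper's ball decomposition; as written, the increment/variance bounds underpinning your Freedman application are unjustified, and the lower bound $c_{new}(v,n)=\Omega(1/k)$ (which the paper gets cheaply from degree-one vertices in $B_{L+1}$ plus a disjoint-edge Chernoff argument) also rests on that same unproved concentration.
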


\bigskip

Unfortunately, if a vertex $v$ lands in a densely populated region of $S$, it might happen that $c_{old}(v,b)$ is much larger than $1/k$. We show the following `negative' result (without trying to aim for the strongest statement) that shows that there is no hope for extending Theorem~\ref{thm:c_new} to $c^-(v,n)$.

\begin{theorem}\label{thm:negative}
Let $C = 5 \log \left( 1/p \right)$ and 
$$
\xi = \xi(n) = 1 / (\omega (\log \log n)^2 (\log \log \log n)) = o(1)
$$ 
for some $\omega = \omega(n)$ tending to infinity as $n \to \infty$. Suppose that $k = k(n)$ is such that $2 \le k \le n^{\xi}.$
Then, a.a.s., there exists a vertex $v$ such that $\deg^-(v,n) \sim k$ and  
\begin{itemize}
\item [(i)] $c^-(v,n) = 1$, provided that $2 \le k \le \sqrt{\log n / C}$,
\item [(ii)] $c^-(v,n) = \Omega(1) \gg 1/k$, provided that  $\sqrt{\log n / C} \le k \le \log n / \log \log n$,
\item [(iii)] $c^-(v,n) \gg (\log \log n)^2 (\log \log \log n) / k \gg 1/k$, provided that \\ $\log n / \log \log n \le k \le n^{\xi}$.
\end{itemize}
\end{theorem}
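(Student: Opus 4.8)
The key idea is that to force $c_{old}(v,n)$ (and hence $c^-(v,n)$) to be large, we should look for a vertex $v$ that appears in a small region of $S$ which already contains a tightly linked cluster of early vertices. If $v$ falls inside a tiny ball whose radius is small enough that, whp, \emph{all} of its (few) in-neighbours are the same early vertices and those early vertices happen to form a clique (or near-clique) among themselves, then $E(N^-(v,n))$ is essentially $\binom{\deg^-(v,n)}{2}$, giving $c^-(v,n)$ close to $1$. For the weaker regimes we relax this: we only need a positive fraction, or a super-$(1/k)$ fraction, of the pairs of neighbours to be adjacent.

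The plan is as follows. First I would fix a target "seed" configuration: a set of $j = j(k)$ vertices born very early (say, in the first $O(1)$ or first polylog steps) that land within distance $r_0$ of each other and are pairwise connected with the correct orientation; the probability of such a configuration is bounded below by a quantity like $(r_0^m)^{\binom{j}{2}} \cdot p^{\binom{j}{2}}$ times the probability the points land in a common ball, which is $\Theta(r_0^{m(j-1)})$ — all of this is a positive constant (depending on $n$ only through $r_0$) that we can track. Because the spheres of influence of these seed vertices have volume at least $A_2/t$ at every later time $t$, and in fact grow once they accrue in-degree, one can show that whp a later vertex $v$ landing inside the small ball around the seed will, by time $n$, have its $\deg^-(v,n)$ in-neighbours drawn almost entirely from (descendants localized near) the seed; controlling which vertices can possibly be in-neighbours of $v$ requires the observation that only vertices $u$ with $d(u,v)$ at most roughly $(\deg^-(u,n)/n)^{1/m}$ can ever send an edge to $v$, so restricting $v$ to a ball of radius $\ll$ (relevant scale) isolates it.

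Second, I would do a \emph{second-moment / many-trials} argument over the choice of $v$: there are $n - o(n)$ vertices born after the seed, each lands uniformly in $S$, and each has a probability $\Theta(r_0^m)$ of landing in the good ball; conditioned on landing there, a constant-probability event (depending on the regime) gives the desired clustering. Since these trials are nearly independent across vertices (the locations are independent; the edge-formation events can be handled by the usual SPA concentration, e.g. via Theorem~\ref{degconc} applied to the seed vertices once their degrees exceed $\omega\log n$), a.a.s.\ at least one such $v$ exists, provided the expected number of successes tends to infinity. This is where the three regimes and the upper bound $k \le n^\xi$ enter: the radius $r_0$ we are allowed to use shrinks as $k$ grows (we need room for $k$ neighbours but they must all be "local"), so $r_0^m$ decreases, and we must check that $n \cdot r_0^m \cdot (\text{success prob}) \to \infty$; the constant $C = 5\log(1/p)$ and the exponent $\xi$ are exactly calibrated so that this holds. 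For part (i), $r_0$ can be taken of order $n^{-1/(2k^2) \cdot (\text{const})}$-ish so that a full clique on $\sim k$ seed vertices survives with probability $n^{-1+o(1)}$, balanced against the $n$ trials; for (ii) and (iii) we only demand a near-clique / a polylog-dense neighbourhood, which is cheaper.

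The main obstacle I anticipate is the coupling step: showing that whp \emph{all} (or the required fraction) of $v$'s in-neighbours are the controlled seed-cluster vertices and not "fresh" vertices that happened to wander into $v$'s small ball later. One has to rule out that a late vertex $w$ born near $v$ becomes an in-neighbour of $v$ while \emph{not} being connected to the rest of $v$'s neighbourhood — i.e., bound $|E_{new}(N^-(v,n))|$ from above and the "bad edges" within $N^-(v,n)$. The right tool is again the degree-concentration machinery: the number of vertices ever landing in a ball of radius $r$ around $v$ up to time $n$ is $\Theta(n r^m)$ with good concentration, and each such vertex is an in-neighbour of $v$ only with probability related to $|S(v,\cdot)|$, so by choosing $r$ on the scale where $\deg^-(v,n)$ "wants" to be (namely $|S(v,n)| \asymp \deg^-(v,n)/n$ forces the relevant radius), the count of potential in-neighbours is $\Theta(k)$ and, with room to spare in the exponent, one forces them to coincide with the seed cluster. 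Making the three radius/probability budgets in (i)--(iii) come out with the stated thresholds is the bookkeeping-heavy part, but conceptually routine once the localization lemma is in place.
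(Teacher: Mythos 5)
Your construction has a directional flaw that breaks its central device. In the SPA model every edge is created at the birth of the younger endpoint and points from the newcomer to the older vertex, so $N^-(v,n)$ consists exclusively of vertices born \emph{after} $v$. A vertex $v$ that ``falls inside a tiny ball'' occupied by an early-born seed clique can only send out-edges to those seed vertices; the seed can never supply $v$'s in-neighbours, and the clique structure among the seeds contributes nothing to $c^-(v,n)$. The paper's construction is the reverse of yours: $v$ is the \emph{first} vertex to land in a ball $B_1$ of volume $A_2/(10n)$, and the dense cluster is formed by the $\beta$ \emph{subsequent} arrivals in $B_1$, each of which lies in the sphere of influence of $v$ and of every earlier arrival (since those spheres have volume at least $A_2/n$), so they become $v$'s in-neighbours and, with probability $p^{\binom{\beta}{2}}$, a directed clique. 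Timing is also essential and is missing from your sketch: placing the seed in the first $O(1)$ or polylog steps would force the degrees of the vertices involved up to roughly $n^{pA_1-o(1)}$, contradicting the requirement $\deg^-(v,n)\sim k\le n^{\xi}$; the paper instead plants the cluster in the late window $[n/(2\alpha),n/\alpha]$ with $\alpha=(k/\beta)^{1/(pA_1)}=n^{o(1)}$, and then needs Phases 2--3 (a Chernoff/coupling argument with relaxed error tolerance, since $\beta$ may be far below $\log n$, followed by Corollary~\ref{cor:tT}) precisely to guarantee that the final degree lands at $\sim k$.

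Two further steps would not go through as you describe them. For regimes (ii)--(iii) the paper does not use a ``near-clique''; it arranges that the early neighbours' degrees stay at least a factor $2+o(1)$ above $v$'s degree for the rest of the process, so $v$'s sphere of influence remains inside theirs and each early neighbour captures a positive fraction of $v$'s $\sim k$ later in-neighbours, yielding $\Omega(\beta k)$ edges and $c^-(v,n)=\Omega(\beta/k)$ --- that containment mechanism is absent from your plan. Finally, your ``many trials over the $n$ later vertices $v$'' does not deliver the a.a.s.\ conclusion: with a seed configuration of probability $n^{-1+o(1)}$ (or $n^{-1/2}$ as in the paper) at a fixed location, the location simply fails with probability close to one and no choice of $v$ rescues it; the trials must range over $n^{1-o(1)}$ disjoint spatial cells (the paper tessellates $S$ and applies a second-moment argument to the nearly independent cells), which is also exactly where the calibration of $C=5\log(1/p)$ and $\xi$ enters.
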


On the other hand, Theorem~\ref{thm:c_new} implies immediately the following corollary.

\begin{cor}\label{cor:main}
Let $\omega=\omega(n)$ be any function tending to infinity together with $n$. The following holds with probability $1-o(n^{-1})$. For any vertex $v$ for which 
$$
\deg^-(v,n)=k=k(n) \geq (\omega \log n)^{4 + (4pA_1+2)/(pA_1(1-pA_1))}
$$
it holds that 
\begin{eqnarray*}
c^-(v,n) &\ge& c_{new}(v,n) = \Omega(1/k) \\
c^-(v,n) &=& c_{old}(v,n) + c_{new}(v,n) \\ &=& O(\omega \log n / k) + O(1/k) = O(\omega \log n / k).
\end{eqnarray*}
\end{cor}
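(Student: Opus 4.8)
The plan is to derive Corollary~\ref{cor:main} directly from Theorem~\ref{thm:c_new} together with the decomposition~\eqref{eq:old_new}, the only additional ingredient being a crude \emph{deterministic} upper bound on $c_{old}(v,n)$. Since all the probabilistic content already sits inside Theorem~\ref{thm:c_new}, which holds with probability $1-o(n^{-1})$, the corollary will inherit the same probability bound, and the statement is automatically simultaneous over all qualifying vertices $v$ because Theorem~\ref{thm:c_new} already is.

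First, fix a vertex $v$ with $\deg^-(v,n)=k=k(n)\geq (\omega\log n)^{4+(4pA_1+2)/(pA_1(1-pA_1))}$. On the good event of Theorem~\ref{thm:c_new} we have $c_{new}(v,n)=\Theta(1/k)$; in particular $c^-(v,n)\geq c_{new}(v,n)=\Omega(1/k)$, which is the first claim, and also $c_{new}(v,n)=O(1/k)=O(\omega\log n/k)$. By~\eqref{eq:old_new} it therefore remains to show that $c_{old}(v,n)=O(\omega\log n/k)$, and this I would establish deterministically.

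For the bound on $c_{old}$, note first that $k\geq \omega\log n$, so $\hat T_v<n$ and $\hat T_v$ is genuinely the first time at which $\deg^-(v,t)$ exceeds $\omega\log n$. In the SPA process each incoming vertex $v_t$ creates at most one edge into $v$, so $\deg^-(v,t)\le\deg^-(v,t-1)+1$ for every $t$; hence $|N^-(v,\hat T_v)|=\deg^-(v,\hat T_v)\le \omega\log n+1$. By definition every edge counted by $|E_{old}(N^-(v,n))|$ has its head in $N^-(v,\hat T_v)$ and its tail in $N^-(v,n)$, and the generated graph is simple, so
\begin{equation*}
|E_{old}(N^-(v,n))|\;\le\;|N^-(v,n)|\cdot|N^-(v,\hat T_v)|\;\le\;k\,(\omega\log n+1).
\end{equation*}
Dividing by $\binom{k}{2}=\Theta(k^2)$ yields $c_{old}(v,n)=O(\omega\log n/k)$. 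Combining this with $c_{new}(v,n)=O(1/k)$ via~\eqref{eq:old_new} gives $c^-(v,n)=O(\omega\log n/k)$, which completes the proof.

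There is no real obstacle here: once Theorem~\ref{thm:c_new} is in hand the argument is entirely routine. The only points that require a moment's care are verifying that $\hat T_v<n$ over the range of $k$ under consideration, so that the notion of old neighbours is not vacuous, and observing that the trivial product bound on $|E_{old}(N^-(v,n))|$ already suffices precisely because there are only $O(\omega\log n)$ old neighbours. The genuine difficulty of this circle of results lies in Theorem~\ref{thm:c_new} (and, for the impossibility of an analogous upper bound $O(1/k)$ on all of $c^-(v,n)$, in Theorem~\ref{thm:negative}), not in this corollary.
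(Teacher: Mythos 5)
Your proof is correct and matches the paper's intended argument: the paper states that the corollary follows immediately from Theorem~\ref{thm:c_new}, the implicit point being exactly your deterministic bound that $v$ has at most $\omega\log n+1$ old neighbours (since $\deg^-(v,t)$ grows by at most one per step), so $|E_{old}(N^-(v,n))|\le k(\omega\log n+1)$ and $c_{old}(v,n)=O(\omega\log n/k)$, with the lower bound and the probability $1-o(n^{-1})$ inherited directly from the theorem. Nothing further is needed.
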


This corollary states that for \textit{all} vertices with large enough degree $k$ we have the desired lower bound $\Omega(1/k)$ for the clustering coefficient $c^{-}(v,n)$. On the other hand, the upper bound, which is $O(\omega \log n / k)$, grows faster than desired due to the presence of $c_{old}(v,n)$. However, despite the `negative' result (stated in Theorem~\ref{thm:negative}), almost all vertices (of large enough degrees) have clustering coefficients of order $1/k$. Below is a precise statement. The conclusions in cases~(i)' and~(ii)' follow immediately from Theorem~\ref{thm:c_new}.

\begin{theorem}\label{thm:average}
Let $\eps, \delta \in (0,1/2)$ be any two constants, and let $k = k(n) \le n^{pA_1 - \eps}$ be any function of $n$. Let $X_k$ be the set of vertices of $G_n$ of in-degree between $(1-\delta)k$ and $(1+\delta)k$. 
Then, a.a.s., the following holds.
\begin{itemize}
\item [(i)] Almost all vertices in $X_k$ have $c_{old}(v,n) = O(1/k)$, provided that $k \gg \log^{C_1} n$, where $C_1 = (1+(2+\eps)pA_1)/(1-pA_1)$.
\item [(i)'] As a result, almost all vertices in $X_k$ have $c^-(v,n) = \Theta(1/k)$, provided that $k \gg \log^{C} n$, where $C = 4 + (4pA_1+2)/(pA_1(1-pA_1))$.
\item [(ii)] The average value of clustering coefficients $c_{old}(v,n)$ over the vertices in $X_k$ is 
$$
C_{old}(X_k) := \frac {1}{|X_k|} \sum_{v \in X_k} c_{old}(v,n) = O(1/k),
$$
provided that $k \gg \log^{C_2} n$, where $C_2 = (1+(2+pA_1+\eps)pA_1)/(1-~pA_1)$.
\item [(ii)'] As a result, the average value of clustering coefficients $c^-(v,n)$ over the vertices in $X_k$ is
$$
C^-(X_k) := \frac {1}{|X_k|} \sum_{v \in X_k} c^-(v,n) = \Theta(1/k),
$$
provided that $k \gg \log^{C} n$, where $C = 4 + (4pA_1+2)/(pA_1(1-pA_1))$.
\end{itemize}
\end{theorem}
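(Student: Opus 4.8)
I would first reduce parts (i)$'$ and (ii)$'$ to (i) and (ii). One checks (a short computation with $0<pA_1<1$ and $\eps<1/2$) that $C>\max(C_1,C_2)$, so the hypotheses of (i) and (ii) are implied; and since $\log^{C}n$ matches (up to a harmless slowly growing factor) the degree threshold of Theorem~\ref{thm:c_new}, that theorem gives, with probability $1-o(n^{-1})$, $c_{new}(v,n)=\Theta(1/k)$ for \emph{every} vertex $v$ with $\deg^-(v,n)=(1+o(1))k$, in particular for every $v\in X_k$. Inserting this into the decomposition $c^-(v,n)=c_{old}(v,n)+c_{new}(v,n)$ of~\eqref{eq:old_new}: by (i), \aas\ almost all $v\in X_k$ satisfy $c^-(v,n)=O(1/k)+\Theta(1/k)=\Theta(1/k)$; and by (ii), $C^-(X_k)=C_{old}(X_k)+\tfrac{1}{|X_k|}\sum_{v\in X_k}c_{new}(v,n)=O(1/k)+\Theta(1/k)=\Theta(1/k)$. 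Thus it remains to prove (i) and (ii), and the whole task is to control $|E_{old}(N^-(v,n))|$.

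The starting point is the identity
$$
|E_{old}(N^-(v,n))| \;=\; \sum_{w\,\in\,N^-(v,\hat T_v)} |N^-(v,n)\cap N^-(w,n)|,
$$
obtained by grouping the contributing edges $(u,w)$ by their old head $w$: for a fixed old neighbour $w$, the admissible tails $u$ are exactly the common in-neighbours of $v$ and $w$. Two structural facts are used throughout. First, since at most one in-edge is added to $v$ per step, $v$ has at most $\lceil\omega\log n\rceil+1=O(\omega\log n)$ old neighbours. Second, Theorem~\ref{degconc} applied to $v$ (whose final in-degree is $\Theta(k)$) gives, with probability $1-o(n^{-4})$, that $\hat T_v=(1+o(1))\,T_v$ with $T_v=n(\omega\log n/k)^{1/(pA_1)}$ and that $\deg^-(v,t)\sim k(t/n)^{pA_1}$ for all $t\ge T_v$; in particular a vertex of final in-degree $\sim k$ is \aas\ born no earlier than about $n^{\eps/(pA_1)}$, which forces the in-degrees of all in-neighbours of $v$ (hence of all its old neighbours) to be polynomially smaller than $n$. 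This last point is exactly where the hypothesis $k\le n^{pA_1-\eps}$ is used.

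For part (i), I would show that for each vertex $v$ with $\deg^-(v,n)=(1+o(1))k$ one has $|E_{old}(N^-(v,n))|=O(k)$ with probability $1-o(1)$, and then sum over $v$ and apply Markov's inequality to deduce that \aas\ all but $o(|X_k|)$ vertices of $X_k$ satisfy $c_{old}(v,n)=|E_{old}(N^-(v,n))|/\binom{\deg^-(v,n)}{2}=O(1/k)$. The per-vertex bound comes from a first-moment estimate: after conditioning on vertex positions and on the concentrated degree trajectories from Theorem~\ref{degconc}, a vertex $u$ born at time $s$ is a common in-neighbour of $v$ and $w$ with probability $p^2|S(v,s-1)\cap S(w,s-1)|$, so (up to the conditioning on the process through time $\hat T_v$) $\E\,|E_{old}(N^-(v,n))|$ is at most $p^2$ times $\sum_{w\in N^-(v,\hat T_v)}\sum_s |S(v,s-1)\cap S(w,s-1)|$. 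The inner sum is small because the two balls can overlap only while both radii exceed $d(v,w)/2$, and — since $w$ landed inside $S(v,t_w-1)$ — that is a short window of time steps around $t_w$ on which, moreover, $|S(w,s-1)|$ is still tiny ($w$ has just been born). Carrying this out, and absorbing the rare possibility that an old neighbour lands near the centre of $v$'s sphere rather than generically inside it, should give $\E\,|E_{old}(N^-(v,n))|=O(\log^{C_1}n)$ on a high-probability event, whence Markov yields $\Prob[\,|E_{old}(N^-(v,n))|>c\,k\,]=O(\log^{C_1}n/k)=o(1)$ precisely when $k\gg\log^{C_1}n$.

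For part (ii) the same computation is run, but now the heavy-tailed contributions are retained — an old neighbour landing very close to $v$, or $v$ itself being born earlier than typical — which is what raises the exponent from $C_1$ to $C_2$ and gives $\sum_v\E\big[|E_{old}(N^-(v,n))|\,\mathbf{1}[v\in X_k]\big]=O\big(\E|X_k|\cdot\log^{C_2}n\big)$. Since $|X_k|=\Theta(\E|X_k|)$ \aas\ (a routine second-moment bound, using the known in-degree distribution of the SPA model), applying Markov's inequality to $\sum_{v\in X_k}|E_{old}(N^-(v,n))|$ with a multiplier $t=t(n)\to\infty$ satisfying $t\log^{C_2}n=o(k)$ (possible because $k\gg\log^{C_2}n$) shows this sum is $o(|X_k|\,k)$ \aas, and dividing by $|X_k|$ and by $\binom{\deg^-(v,n)}{2}=\Theta(k^2)$ gives $C_{old}(X_k)=O(1/k)$. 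The main obstacle in both parts is the inner sum $\sum_s|S(v,s-1)\cap S(w,s-1)|$, which couples the geometry — how close the $O(\omega\log n)$ old neighbours are to $v$ and how their spheres of influence overlap $v$'s over time — with the preferential-attachment dynamics, since the sphere volumes are random and strongly correlated with the very edge events one is counting. Theorem~\ref{degconc} is the device that decouples the two once every relevant vertex has accumulated $\omega\log n$ neighbours; the chaotic initial segment then contributes only the truncation at $\hat T_v$ together with the $O(\omega\log n)$ old neighbours, and the careful accounting of the rare ``atypically close'' old neighbours — which cannot be excluded but are unlikely — is the delicate step, and the reason part (ii) needs a larger $k$ than part (i).
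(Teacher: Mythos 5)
Your reduction of (i)$'$ and (ii)$'$ to (i), (ii) via Theorem~\ref{thm:c_new} and the decomposition~\eqref{eq:old_new} is fine and is exactly what the paper does (and your check that $C>\max(C_1,C_2)$ is correct). The gap is that the actual content of (i) and (ii) --- bounding $|E_{old}(N^-(v,n))|$ --- is never established: the sentence ``carrying this out, and absorbing the rare possibility that an old neighbour lands near the centre of $v$'s sphere \dots should give $\E|E_{old}(N^-(v,n))|=O(\log^{C_1}n)$'' is precisely the delicate step you yourself identify, and it is left as a hope. Moreover, as stated that intermediate claim is very likely false, so the step would fail as written: an old neighbour $w$ born at time $t_w\le \hat T_v$ lands (conditionally) roughly uniformly in $S(v,t_w-1)$, and with probability of order $(t_w/n)^{1-pA_1}$ it lands within distance comparable to the final radius of $S(v,n)$, in which case the two spheres overlap until time $n$ and $w$ can pick up order $\min\{k,\deg^-(w,n)\}$ common in-neighbours with $v$. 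Taking $t_w\approx T_v$ this event has probability about $(\omega\log n/k)^{(1-pA_1)/(pA_1)}$ and contributes order $k$ (up to polylogarithmic factors), so for $pA_1>1/2$ and $k$ polynomial in $n$ a single old neighbour already contributes about $k^{2-1/(pA_1)}$ to the expectation, which swamps $\log^{C_1}n$. Hence the per-vertex Markov bound ``$\Prob[|E_{old}|>ck]=O(\log^{C_1}n/k)$'' does not follow, and the analogous exponent-$C_2$ expectation bound asserted in your part (ii) is equally unsupported; a weaker $o(1)$ failure probability can be salvaged, but only by the dyadic case analysis over the distance from $w$ to $v$ (with the cap $\le\deg^-(v,n)$ and care about conditioning on $v\in X_k$) that your sketch omits --- and that analysis is the theorem.

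For comparison, the paper does not compute expectations of $|E_{old}|$ at all. It conditions on the \aas\ events of Theorem~\ref{degconc}, Corollary~\ref{cor:upper} and the out-degree bound, discards the $o(|X_k|)$ vertices that are born too early or have more than a constant number of points in a ball of volume $1/(T\log^{\eps/2}n)$ around them at time $T$, bounds the contribution of the remaining $O(1)$ close old neighbours trivially by $O(k)$ edges, and shows deterministically that for every far old neighbour $u$ the spheres $S(u,t)$ and $S(v,t)$ are disjoint from time $\hat T=T\log^{(2+\eps)/(1-pA_1)}n$ on, so the relevant common neighbours all arrive before $\hat T$ and number only $\deg^-(v,\hat T)=o(k)$; this geometric separation is where $C_1$ really comes from. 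For (ii), instead of a refined expectation bound, the exceptional vertices are simply charged the uniform bound $c^-(v,n)=O(\omega\log n/k)$ of Corollary~\ref{cor:main} while their number is shown to be $O(|X_k|/(\omega^2\log n))$, which is why $C_2$ exceeds $C_1$. Your first-moment route could in principle be made to work, but the heavy-tail accounting sketched above would have to be carried out explicitly; at present it is missing.
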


\bigskip

Finally, let us discuss the undirected case. The following corollary holds.
\begin{cor}\label{cor:undir}
Let $c(v,n)$ be the clustering coefficient defined for the undirected graph $\hat G_n$ obtained from $G_n$ by considering all edges as undirected. Then
%\begin{itemize}
%\item [(i)] 
Corollary~\ref{cor:main} and
%holds with replacing $c^-(v,n)$ by $c(v,n)$,
%\item [(ii)] 
Theorem~\ref{thm:average} hold with replacing $c^-(v,n)$ by $c(v,n)$.
%\end{itemize}
\end{cor}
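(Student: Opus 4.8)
The plan is to reduce the undirected statement to the already-established directed one by controlling the discrepancy between $\deg(v,n)$ and $\deg^-(v,n)$, and between $|E(N^-(v,n))|$ and $|E(N(v,n))|$, where $N(v,n)$ is the undirected neighbourhood. First I would recall from the structure of the SPA model (Theorem~1.3 of~\cite{spa1}) that the out-degree of every vertex is asymptotically $pA_2/(1-pA_1)=\Theta(1)$; more precisely, one needs that \aas\ no vertex has out-degree exceeding, say, $\log n$ (a standard first-moment / Chernoff bound argument: each out-edge of $v_t$ is present independently with probability $p|S(u,t-1)|\le p$, summed over at most $t$ choices of $u$, and $\sum_u |S(u,t-1)|$ is $O(1)$ in expectation, so $\deg^+(v_t,t)$ is stochastically dominated by a variable with constant mean; a union bound over the $n$ vertices kills the tail). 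Consequently $\deg(v,n)=\deg^-(v,n)+\deg^+(v,n)$, with $\deg^+(v,n)=O(\log n)$ uniformly, so for any vertex with $\deg^-(v,n)=k\gg\log^C n$ we have $\deg(v,n)=(1+o(1))k$, and in particular $\binom{\deg(v,n)}{2}=(1+o(1))\binom{\deg^-(v,n)}{2}$.

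Next I would compare the numerators. Every directed edge counted in $E(N^-(v,n))$ becomes an undirected edge counted in $E(N(v,n))$, and passing to the undirected graph can only add edges in two ways: (a) an edge between two in-neighbours of $v$ that was present in $E_n$ but oriented ``the other way'' — but there is no such thing, since $E(N^-(v,n))$ already collects all edges of $G_n$ with both endpoints in $N^-(v,n)$ regardless of orientation; and (b) edges incident to the (at most $\log n$) out-neighbours of $v$ that now belong to $N(v,n)$. The extra contribution from (b) is at most the number of edges touching those $O(\log n)$ vertices. Using again that out-degrees are $O(\log n)$ and that the in-degree of any vertex at time $n$ is $O(n^{pA_1}\log n)$ w.e.p. (Theorem~\ref{degconc} applied at $t=n$, or the power-law tail bound from~\cite{spa1}), this is a low-order term; in fact the cleanest route is to observe that each out-neighbour $u$ of $v$ contributes at most $\deg(u,n)$ edges, but for the $\Theta(1/k)$ conclusions it suffices that this total is $o(k)$, which holds whenever $k\gg\log^C n$ since the out-neighbours' degrees, being in-degrees of vertices, are themselves controlled. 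Hence $|E(N(v,n))| = |E(N^-(v,n))| + o(k) = |E(N^-(v,n))|(1+o(1))$ once the directed count is already $\Theta(k)$ (which Theorem~\ref{thm:c_new} provides for $c_{new}$), and trivially $|E(N(v,n))| \ge |E(N^-(v,n))|$ always, so the lower bounds transfer for free.

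Putting these together: $c(v,n) = |E(N(v,n))|/\binom{\deg(v,n)}{2} = (1+o(1))\,|E(N^-(v,n))|/\binom{\deg^-(v,n)}{2} = (1+o(1))\,c^-(v,n)$ for every vertex with $\deg^-(v,n)\gg\log^C n$, with all error terms uniform over such vertices by the union bounds above. This immediately upgrades Corollary~\ref{cor:main} (the $\Omega(1/k)$ lower bound and the $O(\omega\log n/k)$ upper bound) to the undirected $c(v,n)$. For Theorem~\ref{thm:average}, one extra point is needed: the set $X_k$ there is defined by \emph{undirected} degree lying in $[(1-\delta)k,(1+\delta)k]$, whereas the directed proof works with in-degree. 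But since $\deg(v,n)=\deg^-(v,n)+O(\log n)$ uniformly and $k\gg\log^{C} n \gg \log n$, the symmetric difference between $X_k$ and the analogous in-degree set is contained in the set of vertices whose in-degree lies in a slightly enlarged window $[(1-\delta-o(1))k,(1+\delta+o(1))k]$; a counting estimate (the number of vertices of in-degree about $j$ is $\Theta(n j^{-1-1/(pA_1)})$ up to the usual \aas\ fluctuations, so thin boundary strips are negligible relative to $|X_k|$) shows this symmetric difference is $o(|X_k|)$, so ``almost all $v\in X_k$'' statements and averages over $X_k$ are unaffected. The main obstacle is the bookkeeping in this last step — making the transfer of the ``almost all'' and ``average'' statements robust to the perturbation of the degree window and to the $o(k)$ correction in the numerator — but it is genuinely routine given the concentration tools already developed for the directed case; no new probabilistic idea is required.
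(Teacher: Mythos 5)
Your overall route is the same as the paper's: bound all out-degrees (the paper simply cites Lemma~\ref{lem:out-degree}, which gives $\deg^+(v,n)\le\omega\log n$ for every vertex with probability $1-o(n^{-3})$, so you need not re-derive this), observe that the denominator changes only by a factor $1+o(1)$ because $k\gg\log^{C}n\gg\omega\log n$, and argue that the extra edges created by passing to the undirected neighbourhood are negligible. Note also that the corollary keeps $X_k$ defined by in-degree and only replaces $c^-(v,n)$ by $c(v,n)$, so your final paragraph about perturbing the degree window is not actually needed.

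The genuine weak point is your bound on the cross edges, i.e.\ edges between an out-neighbour $u$ of $v$ and the rest of $N(v,n)$. You assert that their total is $o(k)$ ``since the out-neighbours' degrees, being in-degrees of vertices, are themselves controlled''. This is not justified: out-neighbours of $v$ are vertices born \emph{before} $v$, so Corollary~\ref{cor:upper} gives no useful bound for them and their in-degrees at time $n$ can be of order $n^{pA_1}$, vastly larger than $k$; moreover, if the sphere of influence of such a $u$ contains that of $v$ for most of the process, then typically a $p$-fraction of $v$'s in-neighbours also link to $u$, so a single out-neighbour can contribute $\Theta(k)$ cross edges. The honest cheap bound is therefore $\deg^+(v,n)\cdot k=O(k\,\omega\log n)$ extra edges, which does transfer the $O(\omega\log n/k)$ upper bound of Corollary~\ref{cor:main} (and, as you say, all lower bounds transfer for free), but it does not by itself yield the $\Theta(1/k)$ statements of Theorem~\ref{thm:average}~(i)' and~(ii)'. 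To recover those one has to treat out-neighbours the way old in-neighbours are treated in the proof of Theorem~\ref{thm:average}: they are all born before time $T$, far ones have spheres of influence that separate from that of $v$ early (bounding common neighbours), and close ones are $O(1)$ in number; alternatively one must show that almost all vertices of $X_k$ have only boundedly many ``engulfing'' out-neighbours. (For what it is worth, the paper's own two-sentence proof only accounts for the $\binom{\omega\log n}{2}$ edges among the out-neighbours themselves and is silent about cross edges, so your write-up is no less explicit than the paper's; but the specific justification you give for the $o(k)$ claim is incorrect, and that step as written would fail.)
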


Indeed, according to~Lemma~\ref{lem:out-degree} (see Section~\ref{sec:c_new}) a.a.s.~the out-degrees of all vertices do not exceed $\omega \log n$. Therefore, even if out-neighbours of a vertex form a complete graph, the contribution from them is at most $\binom{\omega \log n}{2}$, which is much smaller than the required lower bound for $k$.

Finally, let us discuss the connection between the obtained results and the empirical observations discussed in Section~\ref{sec:clustering}. Recall that it was observed that in real-world networks $C(d) \propto d^{-\varphi}$ for some $\varphi > 0$ and it is often the case that $\varphi = 1$. Basically, the results discussed in this section mean that in SPA model we have $\varphi = 1$ and other constants cannot be modeled. Indeed, Theorem~\ref{thm:average} (ii)' (and the corresponding Corollary~\ref{cor:undir}) state slightly weaker results than the asymptotics for $C(d)$, since instead of averaging over the vertices of degree $d$ we average over the set $X_d$ of vertices with degree close to $d$. On the other hand, Theorem~\ref{thm:average} (i)' states a stronger result for individual vertices.

%\lt{Theorem~\ref{thm:average} can easily be extended to the undirected case. Namely, let $\hat G_n$ be an undirected graph obtained from $G_n$ by considering all edges as undirected. In order to analyze $c(k,n)$ we use the following result on the out-degrees of vertices in $G_n$, proved, e.g., in~\cite{spa1,spa2}. A.a.s.\ for every pair $i,t$ such that $1 \le i \le t \le n$ we have
%\begin{eqnarray}
%\deg^+(v_i, t) &=& O \Big( \log^2 n \Big), \label{eq:outdegree}
%\end{eqnarray}
%Using this bound, we obtain the following theorem. 
%\begin{theorem}\label{thm:average_undir}
%Let $\eps, \delta \in (0,1/2)$ be any two constants, and let $k = k(n) \le n^{pA_1 - \eps}$ be any function of $n$. Let $X_k$ be the set of vertices of $\hat G_n$ of degree between $(1-\delta)k$ and $(1+\delta)k$. 
%Then, a.a.s., the following holds:
%\begin{itemize}
%\item [(i)] almost all vertices in $X_k$ have $c(v,n) = \Theta(1/k)$, provided that $k \gg \log^{C_1} n$, where $C_1 = \max\{(1+(1+\eps)pA_1)/(1-pA_1),2\}$. 
%\item [(ii)] the average clustering coefficient of vertices in $X_k$ is $\Theta(1/k)$; that is,
%$$
%\frac {1}{|X_k|} \sum_{v \in X_k} c(v,n) = \Theta(1/k),
%$$
%provided that $k \gg \log^{C_2} n$, where $C_2 = \max\{(1+(1+pA_1+\eps)pA_1)/(1-pA_1),2\}$.
%\end{itemize}
%\end{theorem}
%}

\section{Simulations}\label{sec:simulations}

In this section, we illustrate the theoretical, asymptotic, results presented in the previous section by analyzing the local clustering coefficient for graphs of various orders generated according to the SPA model.
An efficient algorithm used to generate the SPA graphs is described in the proceeding version of this paper~\cite{iskhakov2018clustering}. 

It is proven in Theorem~\ref{thm:average} that $\frac{1}{|X_d|} \sum_{v \in X_d} c^-(v,n) = \Theta (1/d)$ for $d \gg \log^C n$, where $C = 4 + (4pA_1 + 2)/(pA_1(1-pA_1))$.
In order to illustrate this result, for each $p \in \{0.1,0.2,\ldots,0.9\}$, we generated 10 graphs with parameters $A_1 = 1$ and $A_2 = 10(1-p)/p$ ($A_2$ is chosen to fix the expected asymptotic degree equal 10) and computed the average value of $C^-(d,n)$ for $n=10^6$, see Figure~\ref{fig:average_clustering} (left). Similarly, Figure~\ref{fig:average_clustering} (right) presents the same measurements for the undirected average local clustering $C(d,n)$.
Note that in both cases figures agree with our theoretical results: both $C^-(d,n)$ and $C(d,n)$ decrease as $c/d$ with some $c$ for large enough $d$ (we added a function $10/d$ for comparison).
Note that for small $p$ the maximum degree is small, therefore the sizes of the generated graphs are not large enough to observe a straight line in log-log scale. 

\begin{figure}
\begin{center}
\includegraphics[width=0.49\textwidth]{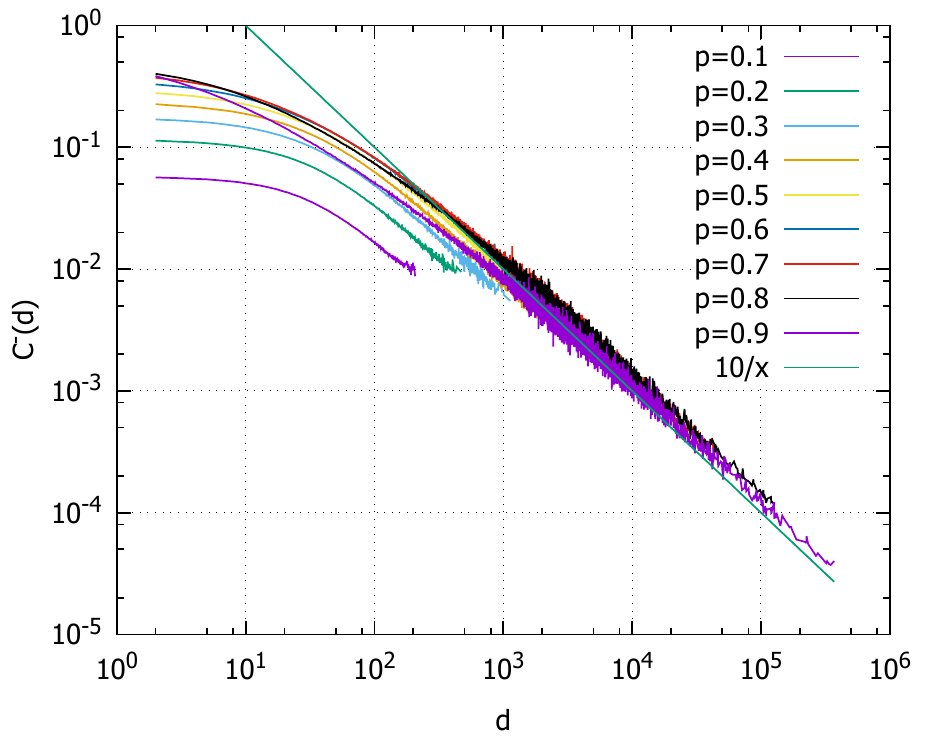}
\includegraphics[width=0.49\textwidth]{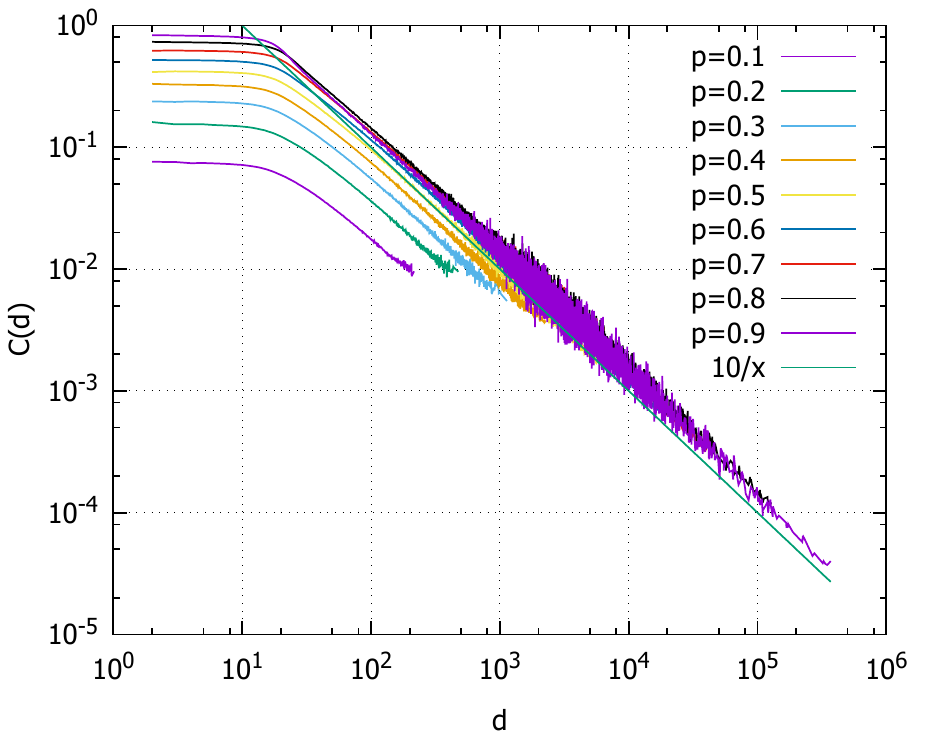}
\vspace{-10pt}
\caption{Average local clustering coefficient for directed (left) and undirected (right) graphs.}
\label{fig:average_clustering}
\end{center}
\vspace{-20pt}
\end{figure}

Note that for all $p\in (0,1)$ we have $C = 4 + \frac{4p+2}{p(1-p)} > 18$, so, our theoretical results are expected to hold for $d \gg \log^C n > 10^{20}$ which is irrelevant as the order of the graph is only $10^{6}$. However, we observe the desired behaviour for much smaller values of $d$; that is, in some sense, our bound is too pessimistic.
 
%\begin{figure}
%\begin{center}
%\includegraphics[width=0.49\textwidth]{one_graph_dir.pdf}
%\includegraphics[width=0.49\textwidth]{one_graph_undir.pdf}
%\caption{Average local clustering coefficient for directed (left) and undirected (right) graphs.}
%\label{fig:clustering}
%\end{center}
%\end{figure}

Also, note that the statement $C^-(d,n) = \Theta(1/d)$ is stronger than the statement $C^-(X_d) = \Theta(1/d)$ of Theorem~\ref{thm:average}, since in the theorem we averaged $c^{-}(v,n)$ over the set $X_d$ of vertices of in-degree between $(1-\delta)d$ and $(1+\delta)d$. In order to illustrate the difference, on Figure~\ref{fig:smooth_clustering} we present the smoothed curves for the directed (left) and undirected (right) local clustering coefficients averaged over $X_d$ for $\delta = 0.1$. Note that this smoothing substantially reduce the noise observed on Figure~\ref{fig:average_clustering}.

\begin{figure}
\begin{center}
\includegraphics[width=0.49\textwidth]{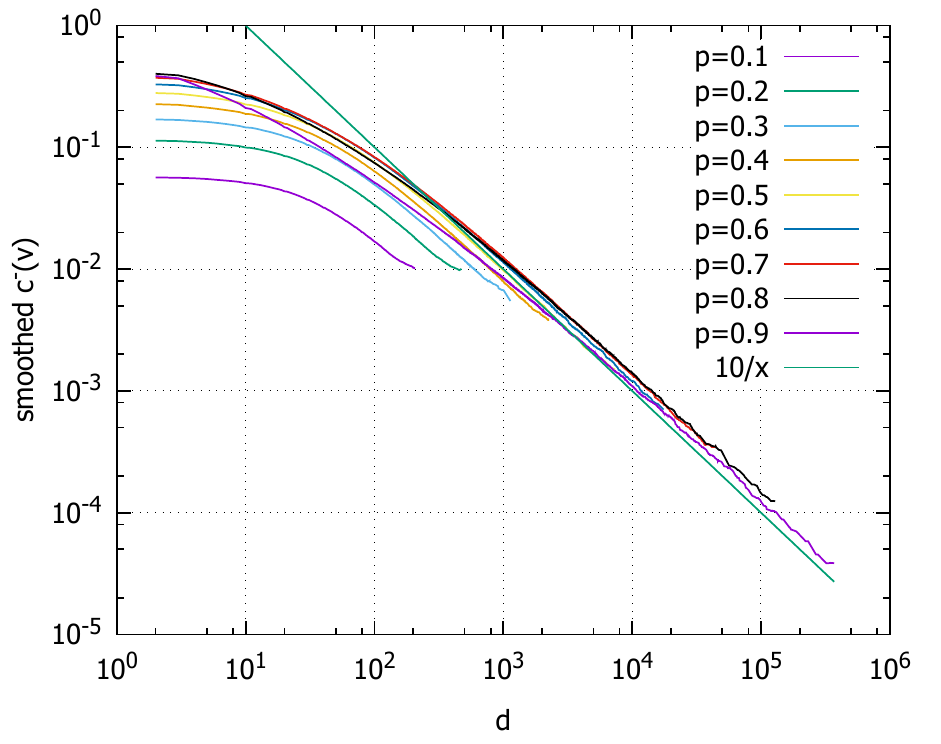}
\includegraphics[width=0.49\textwidth]{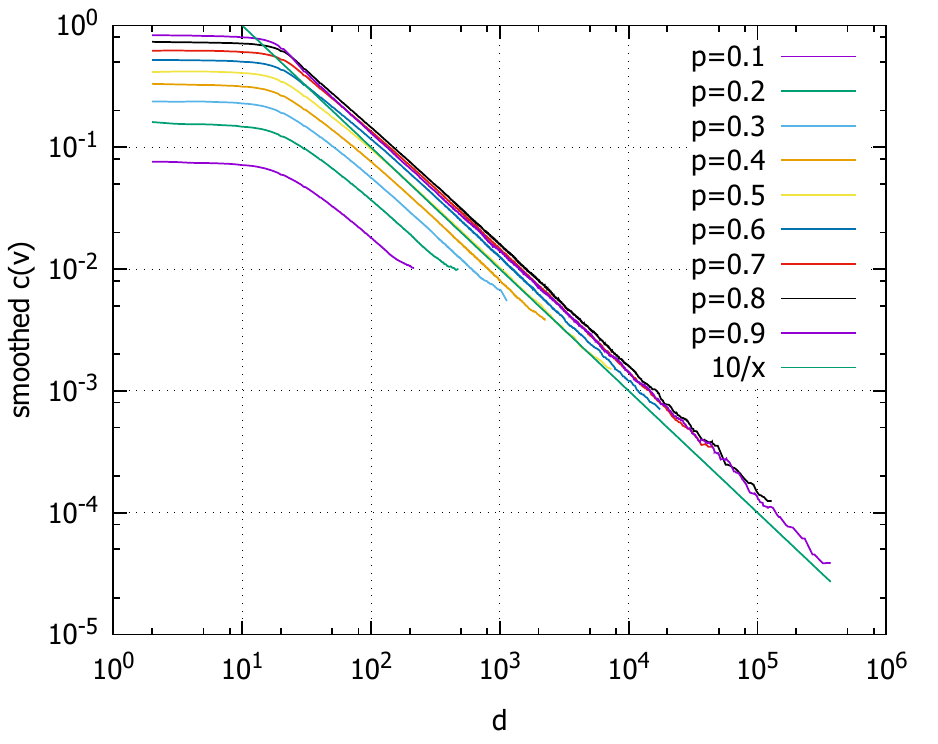}
\vspace{-10pt}
\caption{Local clustering coefficient for directed (left) and undirected (right) graphs averaged over $X_d$.}
\label{fig:smooth_clustering}
\end{center}
\vspace{-20pt}
\end{figure}

Next, let us illustrate the fact that the number of edges between ``new'' neighbours of a vertex is more predictable than the number of edges going from some neighbours to ``old'' ones. We extensively used this difference in Section~\ref{sec:results}, where we analyzed new and old edges separately. 
In our experiments, we split $c^-(v,n)$ into ``old'' and ``new'' parts as in Equation~\eqref{eq:old_new}, but now we take $\hat{T}_v$ be the smallest integer $t$ such that $\deg^-(v,t)$ exceeds $\deg^-(v,n)/2$. As a result, we compute the average local clustering coefficients $C_{old}^-(d)$ and $C_{new}^-(d)$. Figure~\ref{fig:old_new} shows that $C_{new}^-(d)$ can almost perfectly be fitted by $c/d$ with some $c$, while most of the noise comes from  $C_{old}^-(d)$. 

\begin{figure}
\begin{center}
\includegraphics[width=0.49\textwidth]{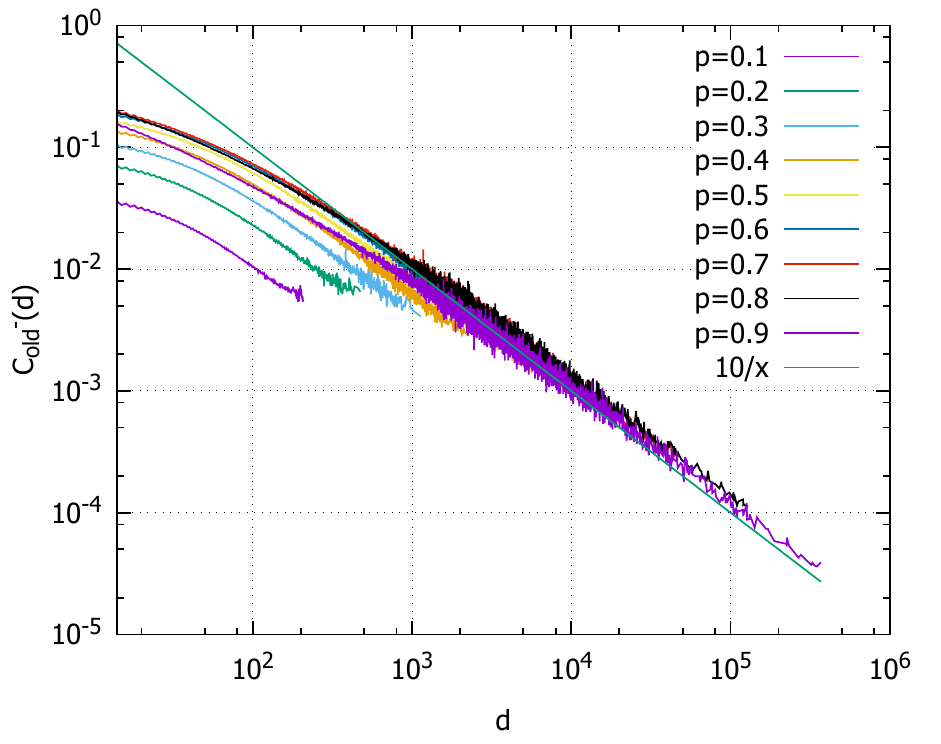}
\includegraphics[width=0.49\textwidth]{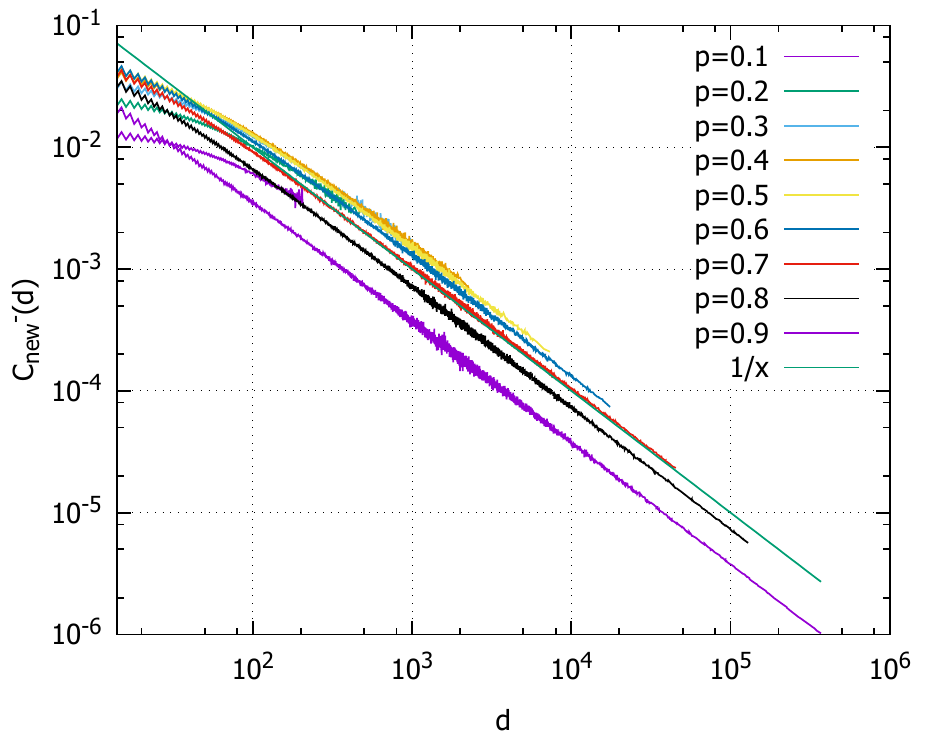}
\caption{Comparison of ``new'' and ``old'' parts of the average local clustering coefficient.}
\label{fig:old_new}
\end{center}
\end{figure}

Finally, Figure~\ref{fig:individual} shows the distribution of the individual local clustering coefficients for one graph generated with $p=0.7$. Theorem~\ref{thm:negative} states that a.a.s. there exist a vertex $v$ of degree $d$ with $c^-(v,n) \gg 1/d$. Also, according to this theorem, the situation is much worse for smaller values of $d$. 
Indeed, one can see on Figure~\ref{fig:individual} that for small $d$ the scatter of points is much larger. On the other hand, in Theorem~\ref{thm:average} we present bounds for $c^-(v,n)$ for almost all vertices, provided that $d$ is large enough. One can see it on the figure too and, similarly to the previously discussed figures, we observe the expected behaviour even for relatively small $n$ despite the bound $\log^C n$ that is bigger than $n$ in our case.

\begin{figure}
\begin{center}
\includegraphics[width=0.6\textwidth]{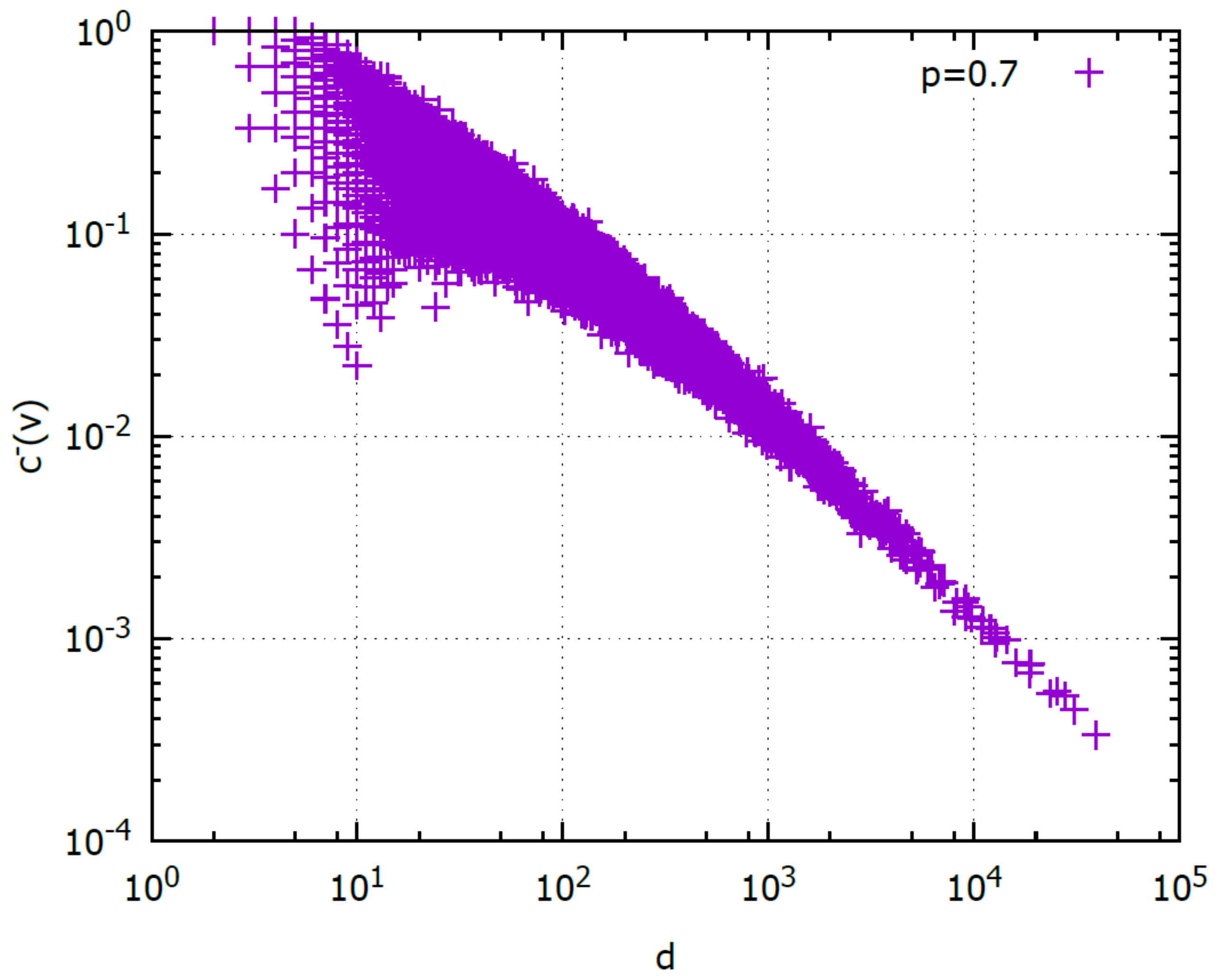}
\caption{The distribution of individual local clustering coefficients.}
\label{fig:individual}
\end{center}
\end{figure}

\section{Proofs}\label{sec:proofs}

\subsection{Proof of Theorem~\ref{degconc}} \label{sec:proof-degconc}

We will use the following version of the Chernoff bound that can be found, for example, in~\cite[p.\ 27, Corollary~2.3]{JLR}. 

\begin{lemma}\label{lem:Chernoff} 
Let $X$ be a random variable that can be expressed as a sum of independent random indicator variables, $X=\sum_{i=1}^{n} X_{i}$, where $X_{i}\in\mathrm{Ber}(p_{i})$ with (possibly) different $p_{i}=\mathbb{P} (X_{i} = 1)= \mathbb{E}X_{i}$. If $\varepsilon\le3/2$, then
\begin{align}\label{eq:Ch}
\mathbb{P} (|X - \mathbb{E }X| \ge\varepsilon\mathbb{E }X)  &  \le2 \exp\left(  - \frac{\varepsilon^{2} \mathbb{E }X}{3} \right)  .
\end{align}
\end{lemma}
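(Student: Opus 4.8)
The statement is the classical Chernoff bound for sums of independent indicators, and I would prove it by the standard exponential-moment (Bernstein) argument: bound the upper and lower tails separately via Markov's inequality applied to $e^{+\lambda X}$ and $e^{-\lambda X}$, optimise over $\lambda>0$ in each case, and then combine the two estimates with a union bound. Throughout write $\mu=\mathbb{E}X=\sum_{i=1}^{n}p_{i}$; we may assume $\mu>0$, since otherwise $X\equiv 0$ and the inequality is immediate. The single computation underlying everything is that, by independence of the $X_i$ and $1+x\le e^{x}$,
\begin{equation*}
\mathbb{E}\,e^{\lambda X}=\prod_{i=1}^{n}\mathbb{E}\,e^{\lambda X_{i}}=\prod_{i=1}^{n}\bigl(1+p_{i}(e^{\lambda}-1)\bigr)\le \exp\bigl(\mu(e^{\lambda}-1)\bigr)
\end{equation*}
for every real $\lambda$.

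For the \emph{upper tail}, fix $\lambda>0$; Markov's inequality applied to $e^{\lambda X}$ gives
\begin{equation*}
\mathbb{P}\bigl(X\ge(1+\varepsilon)\mu\bigr)\le e^{-\lambda(1+\varepsilon)\mu}\,\mathbb{E}\,e^{\lambda X}\le\exp\bigl(\mu(e^{\lambda}-1-\lambda(1+\varepsilon))\bigr).
\end{equation*}
Setting $\lambda=\ln(1+\varepsilon)$ (the minimiser of the exponent) turns the right-hand side into $\exp(-\mu\varphi(\varepsilon))$ with $\varphi(\varepsilon)=(1+\varepsilon)\ln(1+\varepsilon)-\varepsilon$. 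It then suffices to invoke the elementary one-variable inequality $\varphi(\varepsilon)\ge \tfrac{3\varepsilon^{2}}{6+2\varepsilon}$, valid for all $\varepsilon\ge0$ (both sides, together with their first derivatives, vanish at $\varepsilon=0$, and the remaining comparison is a routine calculus check), and to observe that $\varepsilon\le 3/2$ is precisely the condition under which $\tfrac{3\varepsilon^{2}}{6+2\varepsilon}\ge \tfrac{\varepsilon^{2}}{3}$. Hence $\mathbb{P}(X\ge(1+\varepsilon)\mu)\le\exp(-\varepsilon^{2}\mu/3)$.

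For the \emph{lower tail}, if $\varepsilon\ge 1$ then $\{X\le(1-\varepsilon)\mu\}\subseteq\{X=0\}$, which has probability $\prod_{i}(1-p_{i})\le e^{-\mu}\le e^{-\varepsilon^{2}\mu/3}$ (using $\varepsilon\le 3/2<\sqrt3$), so assume $0<\varepsilon<1$. For $\lambda>0$, applying Markov's inequality to $e^{-\lambda X}$,
\begin{equation*}
\mathbb{P}\bigl(X\le(1-\varepsilon)\mu\bigr)\le e^{\lambda(1-\varepsilon)\mu}\,\mathbb{E}\,e^{-\lambda X}\le\exp\bigl(\mu(e^{-\lambda}-1+\lambda(1-\varepsilon))\bigr),
\end{equation*}
and choosing $\lambda=-\ln(1-\varepsilon)>0$ gives $\exp(-\mu\psi(\varepsilon))$ with $\psi(\varepsilon)=(1-\varepsilon)\ln(1-\varepsilon)+\varepsilon$. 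The inequality $\psi(\varepsilon)\ge\varepsilon^{2}/2\ge\varepsilon^{2}/3$ on $(0,1)$ follows in the same manner: $\psi(\varepsilon)-\varepsilon^{2}/2$ and its derivative vanish at $0$, while its second derivative equals $\varepsilon/(1-\varepsilon)\ge0$. Thus $\mathbb{P}(X\le(1-\varepsilon)\mu)\le\exp(-\varepsilon^{2}\mu/3)$ in every case. Adding the two tail bounds gives
\begin{equation*}
\mathbb{P}\bigl(|X-\mathbb{E}X|\ge\varepsilon\,\mathbb{E}X\bigr)\le \exp(-\varepsilon^{2}\mu/3)+\exp(-\varepsilon^{2}\mu/3)=2\exp\Bigl(-\frac{\varepsilon^{2}\mathbb{E}X}{3}\Bigr),
\end{equation*}
which is the claimed bound. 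There is no genuine obstacle here: the argument is entirely standard, and the only place that needs real attention is the verification of the two scalar inequalities for $\varphi$ and $\psi$, and in particular pinning down that the hypothesis $\varepsilon\le 3/2$ is used only in the single step $\tfrac{3\varepsilon^{2}}{6+2\varepsilon}\ge\tfrac{\varepsilon^{2}}{3}$.
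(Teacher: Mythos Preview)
Your argument is correct and is exactly the standard exponential-moment proof of the Chernoff bound. Note, however, that the paper does not actually prove this lemma at all: it simply quotes the statement and cites \cite[p.~27, Corollary~2.3]{JLR} as a reference, so there is no ``paper's own proof'' to compare against. Your write-up is essentially the argument one finds in that reference.
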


\bigskip

Let us start with the following key lemma. 

\begin{lemma}\label{thm:conc}
Let $\omega=\omega(n)$ be any function tending to infinity together with $n$.  For a given vertex $v$, suppose that $\deg^-(v,T)=d \ge \omega \log n$. Then, with probability $1-o(n^{-6})$, for every value of $t$, $T \le t  \le 2T$, 
$$
\left| \deg^-(v,t) - d \cdot \left( \frac tT \right)^{p A_1} \right| \le \frac {5}{p A_1} \cdot \frac {t}{T} \sqrt{d \log n}.
$$
\end{lemma}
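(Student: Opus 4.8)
The plan is to track the in-degree of $v$ as a time-inhomogeneous process and apply a martingale/Chernoff argument on a dyadic-type grid of times inside $[T, 2T]$. Write $D_t = \deg^-(v,t)$ and recall that, given the whole history $\mathcal F_{t-1}$, the probability that the vertex $v_t$ born at time $t$ attaches to $v$ is exactly $p\,|S(v,t-1)| = p(A_1 D_{t-1} + A_2)/(t-1)$, independently of everything else (assuming the sphere of influence has not yet saturated, which is the case throughout since $D_{t-1} = O(d)$ and $d \le t$). Thus $D_t - D_{t-1}$ is a Bernoulli increment with this conditional mean, and $\E[D_t \mid \mathcal F_{t-1}] = D_{t-1}(1 + pA_1/(t-1)) + pA_2/(t-1)$. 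The deterministic comparison function $f(t) = d\,(t/T)^{pA_1}$ is (up to lower-order $A_2$ terms and the difference between $\prod(1+pA_1/(s-1))$ and $(t/T)^{pA_1}$) the solution of the corresponding ``mean-field'' recursion, so the natural object is the normalized process $M_t = D_t / \prod_{s=T+1}^{t}(1 + pA_1/(s-1))$, which is very nearly a martingale (exactly a martingale if $A_2 = 0$; the $A_2$ term contributes only an $O(\log(t/T)) = O(1)$ additive drift, absorbed into the error term since $\sqrt{d\log n} \gg 1$).

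First I would make this precise: show $\prod_{s=T+1}^{t}(1+pA_1/(s-1)) = (1+o(1))(t/T)^{pA_1}$ with an error that is $O(1/T)$ relatively, hence negligible against the claimed additive error $\tfrac{5}{pA_1}\tfrac{t}{T}\sqrt{d\log n}$ once one notes $f(t) \le 2^{pA_1} d$ on the interval. Then I would fix a single target time $t \in [T, 2T]$ and bound $\Prob(|D_t - f(t)| > \tfrac{5}{pA_1}\tfrac{t}{T}\sqrt{d\log n})$. Here I would either (a) apply Lemma~\ref{lem:Chernoff}-type concentration to the conditionally-independent increments after passing to the martingale, using a Freedman/Azuma bound with predictable quadratic variation $\sum_{s} \mathrm{Var}(D_s - D_{s-1}\mid \mathcal F_{s-1}) \le \sum_s p(A_1 D_{s-1}+A_2)/(s-1) = O(d)$ (since on the good event $D_{s-1} = O(d)$ and $\sum_{s=T}^{2T} 1/s = O(1)$); or (b) observe that, conditioned on the good event that $D_s = O(d)$ throughout, $D_t$ is stochastically sandwiched between sums of independent Bernoullis and apply the stated Chernoff bound directly with $\varepsilon = \Theta(\sqrt{(\log n)/d})$, which is $o(1) \le 3/2$ since $d \ge \omega\log n$, giving deviation probability $\le 2\exp(-\Theta(\log n)) = o(n^{-7})$ after choosing the constant $5$ generously. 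Either way the single-time bound is $o(n^{-7})$.

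Then I would upgrade from one time to all $t \in [T,2T]$ simultaneously. Since $D_t$ is monotone nondecreasing in $t$ and $f(t)$ is monotone increasing and continuous with bounded derivative on $[T,2T]$ (so $f$ changes by at most $O(d/T)$ between consecutive integers, which is $O(\sqrt{d\log n}/T) \ll \tfrac{1}{pA_1}\tfrac tT\sqrt{d\log n}$ absorbed into the error), it suffices to control $D_t$ on a sub-grid of $O(T)$ points — in fact, by monotonicity one can even use a coarse geometric grid and interpolate — and then take a union bound over at most $T \le n$ target times, turning $o(n^{-7})$ into $o(n^{-6})$. The monotonicity of both $D_t$ and $f(t)$ is what keeps the error from inflating in the interpolation step: if $D_{t_i}$ is within the error band at grid points $t_i < t < t_{i+1}$, then $D_t \in [D_{t_i}, D_{t_{i+1}}]$ lies within a band of essentially the same width.

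The main obstacle I expect is handling the ``bootstrapping'' circularity: the increment variance bound and the Chernoff/Freedman step both need the a priori bound $D_s = O(d)$ (equivalently $D_s \le f(s)(1+o(1))$) for all $s \le t$, but that is part of what we are proving. The clean way around it is a stopping-time argument: let $\tau$ be the first time in $[T,2T]$ that $D_t$ exits the band $f(t)(1 \pm \tfrac{5}{pA_1 d^{1/2}}\sqrt{\log n}\cdot\tfrac{t}{T})$ (or rather the first time the deviation exceeds the claimed bound), run the martingale estimate for the stopped process $D_{t\wedge\tau}$ — for which the needed upper bound $D_{s\wedge\tau} = O(d)$ holds by definition of $\tau$ — and conclude that $\Prob(\tau \le 2T)$ is $o(n^{-6})$, which is exactly the statement. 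A secondary technical point to check is that the sphere of influence never saturates (the $\min\{\cdot,1\}$ in its definition is inactive), i.e.\ $(A_1 D_t + A_2)/t < 1$ throughout; this holds because $D_t = O(d) = O(t)$ with a small constant, for $n$ (hence $T$) large — though for the very smallest admissible $d = \omega\log n$ and $T$ correspondingly small one should note $d \le \deg^-(v,T) \le T$ trivially, so $A_1 D_T + A_2 \le A_1 T + A_2 < T$ once $T$ is large since $A_1 < 1/p \le$ is not quite enough — one uses instead that $D_T = d$ and the bound propagates, keeping $A_1 D_t/t \le A_1 \cdot 2^{pA_1} d / T < 1$ as $d/T \to 0$. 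These are routine once the stopping-time skeleton is in place.
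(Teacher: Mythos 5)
Your route (b) with the stopping-time device is essentially the paper's own argument: the paper couples $\deg^-(v,t)$ with a sum $X'_t$ of independent Bernoulli variables whose success probabilities are the attachment probabilities evaluated at the upper envelope $d(t/T)^{pA_1}+\frac{5}{pA_1}\frac{t}{T}\sqrt{d\log n}$ (the coupling switches to these independent increments exactly when the band is first exited, which is your stopping time $\tau$), then applies the stated Chernoff bound with $\eps=\Theta(\sqrt{d^{-1}\log n})$ and a union bound over the at most $T\le n$ times to pass from $o(n^{-7})$ to $o(n^{-6})$. The martingale-normalization alternative (a) and the remarks on saturation of the sphere of influence are extra, but the core proposal matches the paper's proof.
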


Of course, we will be applying the lemma to the graph generated by the SPA model. However, the statement is much more general; that is, one can apply it to any (deterministic) graph on $T$ vertices as long as the in-degree of $v$ in this graph is at least $\omega \log n$ and the next $T$ vertices are added according to the rules of the SPA model. After that, the desired bounds for the in-degree of $v$ hold with probability $1-o(n^{-6})$.

Before we move to the proof of the lemma, let us recall a standard, but very useful proof technique in probability theory that allows one to compare two random variables. Consider two biased coins, the first with probability $p$ of turning up heads and the second with probability $q > p$ of turning up heads. For any fixed $k$, the probability that the first coin produces at least $k$ heads should be less than the probability that the second coin produces at least $k$ heads. However, proving it is rather difficult with a standard counting argument. Coupling easily circumvents this problem. Let $X_1, X_2, \ldots, X_n$ be indicator random variables for heads in a sequence of $n$ flips of the first coin. For the second coin, define a new sequence $Y_1, Y_2, \ldots, Y_n$ such that if $X_i = 1$, then $Y_i = 1$; if $X_i = 0$, then $Y_i = 1$ with probability $(q-p)/(1-p)$. Clearly, the sequence of $Y_i$ has exactly the probability distribution of tosses made with the second coin. However, because of the coupling we trivially get that $X := \sum X_i \le Y:= \sum Y_i$ and so $\Prob(X \ge k) \le \Prob(Y \ge k)$, as expected. We will say that $X$ is (stochastically) bounded from above by $Y$.

\begin{proof}
Our goal is to estimate $\deg^-(v,t) - d \cdot \left( t/T \right)^{p A_1}$. We will show that the upper bound holds; the lower bound can be obtained by using an analogous, symmetric, argument.

%We use the following stopping time 
%$$
%T_0=\min \left\{ t \ge T : \left(\deg^-(v,t) >  d \cdot \left( \frac tT \right)^{p A_1} + \frac {5}{p A_1} \cdot \frac {t}{T} \sqrt{d \log n}\right) ~\vee~ \left( t=2T+1\right) \right\}.
%$$ 
%Note that if $T_0=2T+1$, then the in-degree of $v$ remained bounded as required during the entire time interval $T \le t \le 2T$. Hence, in order to prove the bound, we need to show that with probability $1-o(n^{-6})$ we have $T_0=2T+1$.

Let $X_{T}, X_{T+1}, \ldots, X_{2T-1}$ be a sequence of independent Bernoulli random variables (with various parameters):
$$
\Prob(X_t = 1) = p \ \frac {A_1 \left( d \left( \frac tT \right)^{p A_1} + \frac {5}{p A_1} \cdot \frac {t}{T} \sqrt{d \log n} \right) + A_2}{t}.
$$
We will use the following standard coupling. Let $Z_T = \deg^-(v,T) = d$. For each $T < t \le 2T$ we define $Z_t = \deg^-(v,t)$ if
$$
\deg^-(v,t') \le d \cdot \left( \frac {t'}{T} \right)^{p A_1} + \frac {5}{p A_1} \cdot \frac {t'}{T} \sqrt{d \log n}
$$
for every $T \le t' < t$; otherwise, $Z_t = Z_{t-1} + X_{t-1}$. It follows that $Z_t-d$ can be (stochastically) bounded from above by the sum $X'_t = \sum_{i=T}^{t-1} X_i$ of independent indicator random variables. Note that if $T_0$ is the smallest $t$ for which $\deg^-(v,t) > d \cdot \left( \frac {t}{T} \right)^{p A_1} + \frac {5}{p A_1} \cdot \frac {t}{T} \sqrt{d \log n}$, then the following stochastic bound holds: $\deg^-(v,t)=Z_t \le d + X'_t$ for all $t \le T_0$. Hence, the lemma will follow once we show that with the desired probability 
\begin{equation}\label{eq:desired_X'_t}
d + X'_t \le d \cdot \left( \frac {t}{T} \right)^{p A_1} + \frac {5}{p A_1} \cdot \frac {t}{T} \sqrt{d \log n}
\end{equation}
for all $T \le t \le 2T$.

%Suppose that $T_0 \le 2T$. Note that for $t \ge T$ up to and including time-step $T_0-1$, the random variable $\deg^-(v,t)$ is (deterministically) bounded from above. Hence, the number of new neighbours accumulated during this phase of the process, $\deg^-(v,T_0)-d$, can be (stochastically) bounded from above by the sum $X = \sum_{t=T}^{T_0-1} X_t$ of independent indicator random variables $X_t$, where 
%$$
%\Prob(X_t = 1) = p \ \frac {A_1 \left( d \left( \frac tT \right)^{p A_1} + \frac {5}{p A_1} \cdot \frac {t}{T} \sqrt{d \log n} \right) + A_2}{t}.
%$$
Clearly, since  $p A_1 < 1$,
\begin{eqnarray*}
\E X'_t &=& \sum_{i=T}^{t-1} \E X_i \\
&=& p A_1 d T^{-p A_1} \left( \sum_{i=T}^{t-1} i^{p A_1-1} \right) + \frac {t-T}{T} 5 \sqrt{d \log n} + O(1) \\
&=& d \left( \frac {t}{T} \right)^{p A_1} - d\left( \frac {T}{T} \right)^{p A_1} + \frac {t-T}{T}  5 \sqrt{d \log n} + O(1)\\
&=& d \left( \frac {t}{T} \right)^{p A_1} - d + \frac {t-T}{T}  5 \sqrt{d \log n} + O(1).
\end{eqnarray*}
If~(\ref{eq:desired_X'_t}) fails, then
\begin{eqnarray*}
X'_t &\ge& \left( d \cdot \left( \frac{t}{T} \right)^{p A_1} + \frac {5}{p A_1} \cdot \frac {t}{T} \sqrt{d \log n} \right) - d \\
&=& \E X'_t + \frac {5}{p A_1} \cdot \frac {t}{T} \sqrt{d \log n} - \frac {t-T}{T}  5 \sqrt{d \log n} + O(1) \\
&=&  \E X'_t + 5 \sqrt{d \log n} + 5 \left( \frac {1}{p A_1} - 1 \right) \frac {t}{T} \sqrt{d \log n} + O(1)  \\
&\ge& \E X'_t + 5 \sqrt{d \log n}, 
\end{eqnarray*}
using again that it is assumed that $p A_1 < 1$ and so $1/(pA_1)-1 > 0$. It follows from the Chernoff bound~(\ref{eq:Ch}) that
$$
\Prob (|X'_t - \E X'_t| \ge 5 \sqrt{d \log n}) \le 2 \exp \left(  - \ (5\eps/3) \sqrt{d \log n} \right),
$$
where $\eps = 5 \sqrt{d \log n} / \E X'_t$. The maximum value of $\E X'_t$ corresponds to $t=2T$ and so 
\begin{eqnarray*} 
\E X &\leq& d \left( \frac {2T}{T} \right)^{p A_1} - d + \frac {2T-T}{T}  5 \sqrt{d \log n} + O(1) \\
&\sim& d (2^{p A_1} - 1) ~~\le~~ d. 
\end{eqnarray*}
So $\eps \ge 5 \sqrt{d^{-1} \log n}$. Therefore, the probability that~(\ref{eq:desired_X'_t}) fails for a given $T \le t \le 2T$ is at most $2 \exp (- (25/3) \log n) = o(n^{-7})$. Hence,~(\ref{eq:desired_X'_t}) fails for some $T \le t \le 2T$ with probability $o(n^{-6})$ and the proof is finished.
\end{proof}

\bigskip

Now, with Lemma~\ref{thm:conc} in hand we can get Theorem~\ref{degconc}. 

\begin{proof}[Proof of Theorem~\ref{degconc}]
Let $\omega = \omega(n)$ be a function going to infinity with $n$. Let $v$ be a vertex with final degree $k\geq \omega\log n$. Let $T$ be the first time  that the in-degree of $v$ exceeds $(\omega/2) \log n$. Finally, let $d = \deg^-(v,T)$. We obtain from Lemma~\ref{thm:conc} that, with probability $1-o(n^{-6})$, 
\begin{multline*}
d \left( \frac tT \right)^{p A_1} \left(1 - \frac {10}{p A_1} \sqrt{d^{-1} \log n} \right) \le \deg^-(v,t) \\ \le d \left( \frac tT \right)^{p A_1} \left(1 + \frac {10}{p A_1} \sqrt{d^{-1} \log n} \right)
\end{multline*}
for $T \le t \le 2T$. It follows that the degree tends to grow but the sphere of influence tends to shrink between $T$ and $2T$, and thus that the conditions of Lemma \ref{thm:conc} again hold at time $2T$. We can now keep applying the same lemma for times $2T$, $4T$, $8T$, $16T, \dots$, using the final value as the initial one for the next period, to get the statement for all values of $t$ from $T$ up to and including time $n$. Precisely, for $1\leq i<i_{\max}=\lfloor \log_2 n\rfloor+1$, let $d_i=\deg^-(v,2^i T)$. Then by Lemma~\ref{thm:conc}, we have for $i>1$ that $d_i\leq d_{i-1}2^{p A_1} (1+\eps_i)$, where $\eps_i=\frac {10}{p A_1} \sqrt{d_{i-1}^{-1} \log n} $. Since we apply the lemma $O(\log n)$ times (for a given vertex $v$), the following statement holds with probability $1-o(n^{-5})$ from time $T$ on: for any $2^{i-1}T\leq t<2^{i} T$, we have that 
\[
\deg^-(v,t)\leq d \left( \frac tT \right)^{p A_1}\prod_{j=0}^i (1+\eps_i).
\]
It remains to make sure that the accumulated multiplicative error term is still only $(1+o(1))$.
For that, let us note that 
\begin{eqnarray*}
\prod_{j=0}^i (1+\eps_i) &=& \prod_{j=1}^{i} \left( 1 + \frac{10}{p A_1} \sqrt{d^{-1} 2^{-p A_1j} \log n} \right) \\
&\sim& \exp \left( \frac{10}{p A_1} \sqrt{d^{-1} \log n} \sum_{j=1}^{i} 2^{-p A_1j/2} \right) \\
&=& \exp \left( O(\sqrt{d^{-1} \log n}) \right) ~~\sim~~ 1,
\end{eqnarray*}
since $d$ grows faster than $\log n$. A symmetric argument can be used to show a lower bound for the error term and so the result holds. 

It follows that we have the desired behaviour from time $T$. Precisely, for times $T\leq t\leq n$, we have that 
\begin{equation*}
\deg^-(v,t) \sim d \left( \frac tT \right)^{p A_1},
\end{equation*}
where $d=\deg^-(v,T) \sim (\omega/2) \log n$.  Setting $t=n$ and $\deg^-(v,n)=k$, we obtain that 
\[
T \sim \left(\frac{d}{k}\right)^{1/p A_1}n \sim \left(\frac{\omega\log n}{2k}\right)^{1/p A_1}n \sim \left(\frac12\right)^{1/p A_1} T_v. 
\]
Therefore, for large enough $n$, we have that $T<T_v$. As a result, we obtain that, for $T_v \leq t\leq n$,
\begin{equation*}
\deg^-(v,t) \sim k \left( \frac{t}{n} \right)^{p A_1}.
\end{equation*}
Finally, since the statement holds for any vertex $v$ with probability $1-o(n^{-5})$, with probability $1-o(n^{-4})$  the statement holds for all vertices. The proof of the theorem is finished.
\end{proof}

Let us note that Theorem~\ref{degconc} immediately implies the following two corollaries. 
\begin{cor}\label{cor:tT}
Let $\omega =\omega (n)$ be any function tending to infinity together with $n$. The following holds with probability $1-o(n^{-4})$. For every vertex $v$, and for every time $T$ so that $\deg^-(v,T)\geq \omega\log n$, for all times $t$, $T\leq t\leq n$,
\[
\deg^-(v,t) \sim \deg^-(v,T) \left( \frac tT \right)^{p A_1}.
\]
\end{cor}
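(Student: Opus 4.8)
The plan is to read off Corollary~\ref{cor:tT} from the argument used to prove Theorem~\ref{degconc}, not from the theorem's final formula, which is tied to the final degree $k=\deg^-(v,n)$ and the single time $T_v$. The proof of Theorem~\ref{degconc} in fact establishes a slightly more flexible statement: with probability $1-o(n^{-4})$, for every vertex $v$ with $\deg^-(v,n)\ge\omega\log n$ one has
\begin{equation*}
\deg^-(v,t)\sim d_v^{\ast}\left(\frac{t}{T_v^{\ast}}\right)^{pA_1}\qquad\text{for all }T_v^{\ast}\le t\le n,
\end{equation*}
where $T_v^{\ast}$ is the first time $\deg^-(v,\cdot)$ exceeds $(\omega/2)\log n$ (the quantity denoted $T$ in that proof) and $d_v^{\ast}=\deg^-(v,T_v^{\ast})\sim(\omega/2)\log n$. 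So the first step is simply to record this intermediate conclusion; there is nothing new to prove probabilistically, and the $1-o(n^{-4})$ bound is exactly the one claimed. (Vertices $v$ with $\deg^-(v,n)<\omega\log n$ admit no time $T$ satisfying the hypothesis, so the corollary is vacuous for them.)

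The second step is a short deterministic deduction on that event. Fix a vertex $v$ and a time $T$ with $\deg^-(v,T)\ge\omega\log n$. Since in-degrees are non-decreasing in $t$, we get $\deg^-(v,n)\ge\deg^-(v,T)\ge\omega\log n$, so the intermediate conclusion applies to $v$; and, because $\omega\log n>(\omega/2)\log n$, the same monotonicity forces $T\ge T_v^{\ast}$. Hence $T$ and every $t\in[T,n]$ lie in the window $[T_v^{\ast},n]$ on which the displayed asymptotics holds, so I may write $\deg^-(v,t)=(1\pm\varepsilon)\,d_v^{\ast}(t/T_v^{\ast})^{pA_1}$ and $\deg^-(v,T)=(1\pm\varepsilon)\,d_v^{\ast}(T/T_v^{\ast})^{pA_1}$ with a common $\varepsilon=\varepsilon(n)=o(1)$; dividing, the factors $d_v^{\ast}$ and $(T_v^{\ast})^{pA_1}$ cancel and I obtain $\deg^-(v,t)=(1+o(1))\deg^-(v,T)(t/T)^{pA_1}$, which is the claim.

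The only delicate point --- and the one I would flag as the main, if minor, obstacle --- is that the $o(1)$ hidden in ``$\sim$'' must be a single function of $n$, valid simultaneously for all vertices $v$ and all admissible times $T$, since the corollary quantifies over all of them; in particular one must resist re-running Lemma~\ref{thm:conc} afresh from each candidate starting time $T$, which would cost an extra factor $n$ in the union bound (yielding only $1-o(n^{-3})$). This is exactly what the structure of the proof of Theorem~\ref{degconc} buys: the accumulated multiplicative error there is $\exp\!\big(O(\sqrt{(d_v^{\ast})^{-1}\log n})\big)=\exp\!\big(O(\omega^{-1/2})\big)=1+O(\omega^{-1/2})$, and this uses only $d_v^{\ast}\sim(\omega/2)\log n$, so it is literally the same asymptotic estimate for every vertex and every $t\in[T_v^{\ast},n]$. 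Thus a single $\varepsilon(n)=O(\omega^{-1/2})=o(1)$ serves throughout, no further union bound is needed, and the probability stays $1-o(n^{-4})$.
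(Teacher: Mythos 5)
Your proof is correct and is essentially the route the paper intends: the paper derives Corollary~\ref{cor:tT} directly from Theorem~\ref{degconc} (it offers no separate argument beyond ``immediately implies''), and your write-up is a careful unpacking of that deduction using the intermediate statement in the proof of Theorem~\ref{degconc} anchored at the first time the in-degree exceeds $(\omega/2)\log n$, followed by the division of the two asymptotics. Your two added observations --- that monotonicity forces $T\ge T_v^{\ast}$ so no separate treatment of times before $T_v$ is needed, and that the multiplicative error $\exp\left(O(\omega^{-1/2})\right)$ is a single $o(1)$ uniform over all vertices and times so the $1-o(n^{-4})$ bound survives without any extra union bound over $T$ --- are exactly the points that make the ``immediate'' implication rigorous.
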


\begin{cor}\label{cor:upper}
Let $\omega =\omega (n)$ be any function tending to infinity together with $n$. The following holds with probability $1-o(n^{-4})$. For any vertex $v_i$ born at time $i \ge 1$, and $i \le t \le n$ we have that
\begin{equation}\label{upperbounddegree}
\deg^-(v_i,t)\leq \omega \log n \left(\frac{t}{i}\right)^{p A_1}.
\end{equation}
\end{cor}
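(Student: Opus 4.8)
The plan is to bootstrap directly from Corollary~\ref{cor:tT} rather than to rerun the Chernoff estimates of Lemma~\ref{thm:conc}. The underlying dichotomy is simple: a vertex whose in-degree has not yet reached $\omega\log n$ satisfies the claimed bound for free, because $(t/i)^{pA_1}\ge 1$ whenever $t\ge i$; and a vertex whose in-degree is already of order $\omega\log n$ is, from that moment on, controlled by Corollary~\ref{cor:tT}, whose exponent $pA_1$ is exactly the one appearing in~\eqref{upperbounddegree}.

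Concretely, I would first record two elementary facts about the SPA process: $\deg^-(v,t)$ is non-decreasing in $t$ and increases by at most one at each time step (at step $t$ the only new edges are $(v_t,u)$, each older $u$ receiving at most one), and $\deg^-(v_i,i)=0$ since step $i$ creates only out-edges of $v_i$. Then, observing that $\omega/2$ also tends to infinity, I would apply Corollary~\ref{cor:tT} with $\omega/2$ in place of $\omega$ and condition on the resulting event, which has probability $1-o(n^{-4})$; from here the argument is deterministic. For a fixed vertex $v_i$, let $T_i$ be the first time its in-degree reaches $\lceil(\omega/2)\log n\rceil$, setting $T_i=n+1$ if this never happens by time $n$. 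The two elementary facts give $T_i>i$ and $\deg^-(v_i,T_i)\le\lceil(\omega/2)\log n\rceil+1$. For $t<T_i$ (in particular for all $i\le t\le n$ when $T_i=n+1$) we have $\deg^-(v_i,t)<(\omega/2)\log n\le\omega\log n\,(t/i)^{pA_1}$; for $T_i\le t\le n$, Corollary~\ref{cor:tT} applied at time $T_i$ yields
\[
\deg^-(v_i,t)\sim\deg^-(v_i,T_i)\left(\frac{t}{T_i}\right)^{pA_1}\le(1+o(1))\left((\omega/2)\log n+1\right)\left(\frac{t}{i}\right)^{pA_1},
\]
where I used $T_i\ge i$, so that $t/T_i\le t/i$. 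Since $(1+o(1))\big((\omega/2)\log n+1\big)\le\omega\log n$ for $n$ large, the bound follows; it holds uniformly over all vertices $v_i$ and all $i\le t\le n$, the only probabilistic input being the event of Corollary~\ref{cor:tT}, which fails with probability $o(n^{-4})$.

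I do not anticipate a genuine obstacle here; the content is entirely bookkeeping. The two points requiring care are ensuring $T_i\ge i$ (so that replacing $(t/T_i)^{pA_1}$ by $(t/i)^{pA_1}$ is a legitimate weakening) and absorbing the $(1+o(1))$ factor produced by Corollary~\ref{cor:tT} together with the $+1$ coming from the integrality of degrees — which is precisely why one invokes the corollary with $\omega/2$ rather than with $\omega$, leaving a factor-$2$ cushion to soak up these errors.
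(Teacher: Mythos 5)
Your argument is correct and follows the paper's intended route: the paper treats Corollary~\ref{cor:upper} as an immediate consequence of Theorem~\ref{degconc} (equivalently Corollary~\ref{cor:tT}), and your write-up simply supplies the routine bookkeeping — monotonicity of $\deg^-$ below the threshold, the stopping time at in-degree $\approx(\omega/2)\log n$, the bound $t/T_i\le t/i$, and the factor-$2$ cushion to absorb the $(1+o(1))$ and integrality errors. Nothing further is needed.
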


\subsection{Proof of Theorem~\ref{thm:c_new}} \label{sec:c_new}

Let $B$ be a ball of volume $b = b(n)$ and $t = t(n) \in \nat$ be any function of $n$ such that $bt \to \infty$ as $n \to \infty$. It will be crucial for the argument to understand the behaviour of the random variables $N_{i,t} = N_{i,t}(b)$ counting the number of vertices in $B$ that are of in-degree $i$ at time $t$; that is,
$$
N_{i,t} = | \{ w \in B : \deg^-(w,t) = i \} |.
$$
The arguments presented below are similar to the ones in~\cite{spa1} showing that the degree distribution of $G_n$ follows a power-law. 

The equations relating the random variables $N_{i,t}$ are described as follows. As $G_0$ is the null graph, $N_{i,0}=0$ for $i \ge 0.$ For all $t \in \nat \cup \{0\}$, we derive that
\begin{eqnarray}
{\mathbb{E}}(N_{0,t+1}-N_{0,t}~|~G_{t}) &=& b-N_{0,t}p\frac{A_2} {t}, \label{unscaled} \\
{\mathbb{E}}(N_{i,t+1}-N_{i,t}~|~G_{t}) &=&N_{i-1,t}p\frac{A_1(i-1)+A_2}{t}-N_{i,t}p\frac{A_1i+A_2}{t}. \label{unscaled1}
\end{eqnarray}

Recurrence relations for the expected values of $N_{i,t}$ can be derived by taking the expectation of the above equations. To solve these relations, we use the following lemma on real sequences, which is Lemma~3.1 from~\cite{FCLL}.

\begin{lemma}\label{lem}
If $(\alpha_{t}),$ $(\beta_{t})$ and $(\gamma_{t})$ are real sequences satisfying the relation
\[
\alpha_{t+1}=\left( 1-\frac{\beta_{t}}{t}\right) \alpha_{t}+\gamma_{t},
\]
and $\lim_{t\rightarrow \infty }\beta_{t}=\beta>0$ and $\lim_{t\rightarrow \infty }\gamma_{t}=\gamma,$ then $\lim_{t\rightarrow \infty}\frac{\alpha_{t}}{t}$ exists and equals $\frac{\gamma}{1+\beta}$.
\end{lemma}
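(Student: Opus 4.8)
Write $L := \gamma/(1+\beta)$ and $a_t := \alpha_t/t$; the plan is to prove $a_t \to L$ directly, which delivers both the existence of the limit and its value in one stroke. First I would recast the recursion in terms of $a_t$: dividing the identity $\alpha_{t+1} = (1-\beta_t/t)\alpha_t + \gamma_t$ by $t+1$ and noting $\frac{t-\beta_t}{t+1} = 1 - \frac{1+\beta_t}{t+1}$ gives
\[
a_{t+1} = \left(1 - \frac{1+\beta_t}{t+1}\right) a_t + \frac{\gamma_t}{t+1}.
\]
Setting $b_t := a_t - L$ and using $\gamma = (1+\beta)L$, a one-line rearrangement turns this into
\[
b_{t+1} = \left(1 - \frac{1+\beta_t}{t+1}\right) b_t + \frac{\delta_t}{t+1}, \qquad \delta_t := (\gamma_t - \gamma) - (\beta_t - \beta)L,
\]
where $\delta_t \to 0$ by hypothesis. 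So the whole statement reduces to showing that a sequence obeying this last recursion tends to $0$.

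To finish, I would run an elementary estimate. Since $\beta_t \to \beta > 0$, there is $T_0$ with $\beta_t \ge 0$ for $t \ge T_0$, and for $t$ large one also has $1+\beta_t \le t+1$, so $0 \le 1 - \frac{1+\beta_t}{t+1} \le \frac{t}{t+1}$. Given $\eps > 0$, pick $T \ge T_0$ large enough that, additionally, $|\delta_t| \le \eps$ for all $t \ge T$. Then for $t \ge T$ we get $(t+1)\,|b_{t+1}| \le t\,|b_t| + \eps$, and telescoping from $T$ to any $m > T$ yields $m\,|b_m| \le T\,|b_T| + (m-T)\eps \le T\,|b_T| + m\eps$, hence $|b_m| \le T|b_T|/m + \eps$. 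Therefore $\limsup_{m\to\infty} |b_m| \le \eps$; letting $\eps \to 0$ gives $b_m \to 0$, i.e., $\alpha_t/t \to \gamma/(1+\beta)$, as claimed.

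I do not anticipate a genuine obstacle here — the argument is pure bookkeeping — but the one point worth flagging is exactly where the hypothesis $\beta > 0$ (as opposed to merely $\beta_t \to \beta$) is used: it ensures the homogeneous multiplier $1 - \frac{1+\beta_t}{t+1}$ is eventually a genuine contraction with factor at most $t/(t+1)$, so the telescoped product $\prod_{s}\bigl(1-\tfrac{1+\beta_s}{s+1}\bigr)$ decays (roughly like $t^{-(1+\beta)}$) and the accumulated effect of the vanishing perturbations $\delta_t/(t+1)$ stays under control; if $\beta = 0$ this breaks down and the conclusion can fail. An essentially equivalent alternative would be to solve the recursion explicitly as $\alpha_t = \alpha_{t_0}\prod_{s=t_0}^{t-1}(1-\beta_s/s) + \sum_{j=t_0}^{t-1}\gamma_j\prod_{s=j+1}^{t-1}(1-\beta_s/s)$, estimate $\prod_{s=j+1}^{t-1}(1-\beta_s/s) = (1+o(1))(j/t)^{\beta}$, and evaluate the resulting sum by an Abel/Cesàro-type argument — but the telescoping version above is cleaner and sidesteps those asymptotics.
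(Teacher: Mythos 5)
Your argument is correct: the reduction to $b_{t+1}=\bigl(1-\tfrac{1+\beta_t}{t+1}\bigr)b_t+\tfrac{\delta_t}{t+1}$ with $\delta_t\to0$ is right, and the telescoping of $(t+1)|b_{t+1}|\le t|b_t|+\eps$ cleanly yields $\limsup|b_m|\le\eps$, hence $\alpha_t/t\to\gamma/(1+\beta)$. Note, however, that the paper does not prove this statement at all: it imports it verbatim as Lemma~3.1 of~\cite{FCLL}, so there is no in-paper proof to compare against. Relative to the standard treatment in that reference (and to the alternative you sketch yourself), which solves the recursion explicitly and estimates the products $\prod_s(1-\beta_s/s)\approx (j/t)^{\beta}$, your normalization-plus-telescoping route is more elementary: it avoids all product asymptotics at the price of being specific to the limit $\gamma/(1+\beta)$ rather than describing the full solution. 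One small quibble: your closing remark overstates the role of $\beta>0$. Your own estimate never uses a contraction factor better than $t/(t+1)$; all it needs is $\beta_t\ge 0$ eventually (so that $0\le 1-\tfrac{1+\beta_t}{t+1}\le \tfrac{t}{t+1}$), which the hypothesis $\beta>0$ guarantees but which would equally hold in boundary cases with $\beta=0$ and $\beta_t\ge0$; so the claim that the conclusion ``can fail'' at $\beta=0$ is not established by your argument (nor needed, since the lemma assumes $\beta>0$).
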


Applying this lemma with $\alpha_t=\E(N_{0,t}) / b$, $\beta_t=pA_2,$ and $\gamma_t=1$ gives that $\E(N_{0,t}) \sim c_0 b t$ with 
$$
c_0 = \frac {1}{1+pA_2}. 
$$ 
For $i \ge 1$, the lemma can be inductively applied with $\alpha_t=\E(N_{i,t}) / b$, $\beta_t=p(A_1i+A_2)$, and $\gamma_t=\E(N_{i-1,t}) p (A_1(i-1)+A_2) / (bt)$ to show that $\E(N_{i,t}) \sim c_ibt$, where
$$
c_{i}=c_{i-1} p \frac{A_1(i-1)+A_2}{1+p(A_1i+A_2)}.
$$
It is straightforward to verify that 
$$
c_i={\frac{p^i}{1+pA_2+ipA_1} \prod_{j=0}^{i-1}\frac{jA_1+A_2}{1+pA_2+jpA_1}}. %\label{ci}
$$
The above formula implies that $c_i=(1+o(1))ci^{-(1+1/(pA_1))}$ (as $i \to \infty$) for some constant $c$, so the expected proportion $N_{i,t}/(bt)$ asymptotically follows a power-law with exponent $1+1/(pA_1)$. 

\bigskip

We prove concentration for $N_{i,t}$ when $i\leq i_f$ (for some function $i_f = i_f(n)$) by using a relaxation of Azuma-Hoeffding martingale techniques. The random variables $N_{i,t}$ do not a priori satisfy the $c$-Lipschitz condition: indeed, a new vertex may fall into many overlapping regions of influence and so it can potentially change degrees of many vertices. Nevertheless, we will prove that deviations from the $c$-Lipschitz condition occur with very small probability. The following lemma gives a deterministic bound for $|N_{i,t+1}-N_{i,t}|$ which holds with high probability. Indeed, it is obvious that $|N_{i,t+1}-N_{i,t}| \le \max \{ \deg^+(v_{t+1}, t+1), 1\}$. Note that a weaker bound of $\log^2 n$ was proved in~\cite{spa1}; with Corollary~\ref{cor:upper} in hand, we can get slightly better bound but the argument remains the same. We present the proof for completeness.

\begin{lemma}\label{lem:out-degree}
Let $\omega =\omega (n)$ be any function tending to infinity together with $n$. The following holds with probability $1-o(n^{-3})$. For every vertex $v_i$, 
\[
\deg^+(v_i,i) = \deg^+(v_i,n) \le \omega \log n.
\]
\end{lemma}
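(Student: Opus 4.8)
The plan is to bound the out-degree of a freshly born vertex $v_i$ by controlling, for each vertex $u$ already present at time $i$, the probability that $v_i$ lands in $S(u,i-1)$ and that a link is created. The first observation is that $\deg^+(v_i, i) = \deg^+(v_i, n)$ trivially, since out-edges are only created at the moment of birth; so it suffices to bound $\deg^+(v_i,i)$. Conditionally on $G_{i-1}$ and on the existing degrees, $\deg^+(v_i,i)$ is stochastically dominated by a sum of independent indicator variables, one for each $u \in V_{i-1}$, where the indicator for $u$ has success probability $p\,|S(u,i-1)| = p \min\{(A_1 \deg^-(u,i-1)+A_2)/(i-1),\,1\}$. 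Hence $\E(\deg^+(v_i,i) \mid G_{i-1}) \le \frac{p}{i-1}\sum_{u \in V_{i-1}} (A_1 \deg^-(u,i-1) + A_2) = \frac{p}{i-1}\big(A_1 |E_{i-1}| + A_2 (i-1)\big)$, and since $|E_{i-1}| \le \deg^+$-sum $\le$ (a crude bound of) $O(i)$ edges in expectation — or more carefully, since the total in-degree equals the total number of edges, which is at most the number of time steps times the maximum out-degree — one gets that the conditional expectation is $O(1)$ on a suitable high-probability event. A clean way to make this rigorous without circularity is to first establish, via the weaker $\log^2 n$ bound from~\cite{spa1} (or directly), that \aas\ $|E_{i-1}| = O(i \log^2 n)$, which already gives conditional expectation $O(\log^2 n)$; this will be refined below using Corollary~\ref{cor:upper}.

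The main step is then a Chernoff bound. Fix $i$. On the event of Corollary~\ref{cor:upper} (which holds with probability $1-o(n^{-4})$), every vertex $v_j$ with $j \le i-1$ satisfies $\deg^-(v_j, i-1) \le \omega \log n\, ((i-1)/j)^{pA_1}$. Therefore
\begin{align*}
\sum_{j=1}^{i-1} \big(A_1 \deg^-(v_j,i-1) + A_2\big)
&\le A_1 \omega \log n \,(i-1)^{pA_1} \sum_{j=1}^{i-1} j^{-pA_1} + A_2(i-1) \\
&= O\big(\omega \log n \cdot (i-1)\big),
\end{align*}
using $pA_1 < 1$ so that $\sum_{j=1}^{i-1} j^{-pA_1} = O((i-1)^{1-pA_1})$. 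Consequently, on this event, $\E(\deg^+(v_i,i) \mid G_{i-1}) \le p\,O(\omega \log n) = O(\omega \log n)$. Now apply Lemma~\ref{lem:Chernoff} to the dominating sum of independent indicators $X = \deg^+(v_i,i)$ with $\mu := \E X = O(\omega \log n)$: taking the deviation threshold so that $X$ exceeds, say, $\tfrac{1}{2}\omega \log n$ when $\mu$ happens to be small, one uses instead the standard upper-tail bound $\Prob(X \ge \lambda) \le 2\exp(-\lambda/6)$ valid once $\lambda \ge 2\mu$ — or, more uniformly, the bound $\Prob(X \ge a) \le (e\mu/a)^a$. Choosing $a = \omega\log n$ (adjusting the implicit constant inside $\omega$, or replacing $\omega$ by $\omega^2$, which is still a function tending to infinity) gives $\Prob(\deg^+(v_i,i) \ge \omega \log n) \le \exp(-\Omega(\omega \log n)) = o(n^{-4})$ for each fixed $i$. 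A union bound over the $n$ choices of $i$, together with the $o(n^{-4})$ failure probability of Corollary~\ref{cor:upper}, yields the claimed $1-o(n^{-3})$.

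The main obstacle is the mild circularity: bounding $\deg^+(v_i,i)$ requires a bound on the in-degrees of older vertices (to control $\sum_u |S(u,i-1)|$), while Corollary~\ref{cor:upper} — the clean source of such in-degree bounds — is itself derived from Theorem~\ref{degconc}, whose proof does not use Lemma~\ref{lem:out-degree}, so there is in fact no genuine circle; one must simply be careful to invoke Corollary~\ref{cor:upper} as a black box and condition on its defining event throughout. A secondary technical point is the degenerate regime where $\mu = \E(\deg^+(v_i,i)\mid G_{i-1})$ is a constant (small $i$, or small degrees around), so that the naive relative-error Chernoff bound of Lemma~\ref{lem:Chernoff} with $\varepsilon$ large is not directly in the stated form; this is handled by switching to the absolute upper-tail estimate $\Prob(X \ge a) \le (e\mu/a)^a$, which is standard and costs nothing. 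Everything else is routine: the domination by independent indicators is immediate from the model's definition, and the harmonic-type sum $\sum j^{-pA_1}$ is elementary since $pA_1 < 1$.
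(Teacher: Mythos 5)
The core step of your argument is flawed. Conditionally on $G_{i-1}$, the only fresh randomness is the \emph{single} uniform position of $v_i$ together with the coin flips, so the indicators $\mathbf{1}[v_i\in S(u,i-1)]$, $u\in V_{i-1}$, are all driven by the same random point and are positively correlated whenever spheres of influence overlap. Your claim that $\deg^+(v_i,i)$ is stochastically dominated by a sum of \emph{independent} Bernoulli variables with parameters $p\,|S(u,i-1)|$ is false in general: if $m$ old vertices had (nearly) the same sphere $B$ of volume $b$, the true out-degree would exceed $mp/2$ with probability about $b$, while the independent surrogate $\bin(m,pb)$ exceeds $mp/2$ with probability exponentially small in $m$. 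So the upper tail of the true variable can be polynomially, not exponentially, small, and Lemma~\ref{lem:Chernoff} cannot be applied to it; this is exactly the ``new vertex may fall into many overlapping regions of influence'' difficulty that the paper flags immediately before the lemma (and the clustering of old vertices exploited in Theorem~\ref{thm:negative} shows the concern is not vacuous). To rule out such bad configurations of $G_{i-1}$ you would need to bound how many spheres can simultaneously cover a point, which is essentially the statement you are trying to prove.

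The paper's proof avoids this by flipping the conditioning: by the translation invariance of the torus one may fix $v_i$ at the centre of $S$, and then use the randomness of the \emph{old} vertices' positions, which are genuinely independent. For $j<i$ let $X_j$ indicate that $v_j$ lies in the ball around $v_i$ of volume $\alpha_j=j^{-pA_1}i^{pA_1-1}\omega^{2/3}\log n$; on the event of Corollary~\ref{cor:upper} (applied with $\omega^{1/3}$ in place of $\omega$) one has $(A_1\deg^-(v_j,i)+A_2)/i=o(\alpha_j)$, so $X_j=0$ forces $v_i\notin S(v_j,i-1)$ and hence $\deg^+(v_i,i)\le\sum_j X_j$, a sum of truly independent indicators with mean $O(\omega^{2/3}\log n)=o(\omega\log n)$; Chernoff and a union bound over $i$ finish the proof. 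Note also a secondary point in your write-up: your expectation bound is $O(\omega\log n)$ with implied constant roughly $pA_1/(1-pA_1)$, which may exceed $1$, and ``replacing $\omega$ by $\omega^2$'' weakens the statement rather than fixing this (the bound must be proved for the given $\omega$); the correct adjustment is the paper's, namely invoking Corollary~\ref{cor:upper} with a smaller auxiliary function so that the mean is $o(\omega\log n)$ before applying the tail bound.
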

\begin{proof}
Let us focus on any $1 \le i \le n$. Since $v_i$ is chosen uniformly at random from the unit hypercube (note that the history of the process does not affect this distribution) with the torus metric, without loss of generality, we may assume that $v_i$ lies in the centre of the hypercube. For $1 \le j<i$, let $X_j$ denote the indicator random variable of the event that $v_j$ lies in the ball around $v_i$ (or vice versa) with volume
$$
\alpha_j = j^{-pA_1} i^{pA_1-1} \omega^{2/3} \log n.
$$

By Corollary~\ref{cor:upper} (applied with $\omega^{1/3}$ instead of $\omega$), we may assume that 
$$
\deg^-(v_j,i) \le (i/j)^{pA_1} \omega^{1/3} \log n,
$$ 
for all $j \in [i-1]$. Note that $(A_1 \deg^-(v_j,i) + A_2)/i = o(\alpha_j)$. Hence, for all $j \in [i-1]$, $X_j = 0$ implies that $v_i$ is not in the influence region of $v_j$ and so there is no directed edge from $v_i$ to $v_j$. Therefore, we have that 
$$
\deg^+(v_i,i) \le \sum_{j=1}^{i-1} X_j.
$$ 
Since
\begin{multline*}
\E \left( \sum_{j=1}^{i-1} X_j \right) = \sum_{j=1}^{i-1} \alpha_j = i^{pA_1-1} \omega^{2/3} \log n \sum_{j=1}^{i-1} j^{-pA_1} \\ = O( \omega^{2/3} \log n) = o (\omega \log n),
\end{multline*}
the assertion follows easily from the Chernoff bound. 
\end{proof}

Now, we are ready to prove concentration for the random variables $N_{i,t}$. In order to explain the technique, we investigate $N_{0,t}$, the number of vertices of in-degree zero. The argument easily generalizes to other values of $i$ and we explain it afterwards. We will use the supermartingale method of Pittel et al.~\cite{PSW}, as described in~\cite{NW-DE}.

\begin{lemma}\label{l:corollary}
Let $G_{0},G_{1},\dots, G_{n}$ be a random graph process and $X_{t}$ a random variable determined by $G_{0},G_{1},\dots, G_{t}$, $0\leq t\leq n$. Suppose that for some real constants $\beta_t$ and constants $\gamma_{t}$,
$$
\E(X_{t}-X_{t-1} ~~|~~ G_{0},G_{1},\dots, G_{t-1})<\beta_t $$
and
$$
|X_{t}-X_{t-1}-\beta_t|\leq \gamma_{t}
$$
for $1\leq t\leq n$. Then for all $\alpha >0$,
$$
\mathbb{P}\left(\mbox{For some }s\ \mbox{with }0\leq s\leq n:X_{s}-X_{0}\geq \sum_{t=1}^s \beta_t +\alpha \right) \leq \exp \Big(-\frac{\alpha^{2}}{2\sum \gamma_{t}^{2}}\Big)\;.
$$
\end{lemma}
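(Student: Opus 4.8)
The plan is to run the standard exponential--supermartingale (Doob--Hoeffding) argument. First I would pass to the centred process
\[
Y_s := X_s - X_0 - \sum_{t=1}^{s} \beta_t \qquad (0 \le s \le n),
\]
so that $Y_0 = 0$, and work with the filtration $\mathcal{F}_s := \sigma(G_0, G_1, \dots, G_s)$. By the first hypothesis, $\E(Y_s - Y_{s-1} \mid \mathcal{F}_{s-1}) = \E(X_s - X_{s-1} \mid \mathcal{F}_{s-1}) - \beta_s < 0$, so $(Y_s)_{0 \le s \le n}$ is a supermartingale, and by the second hypothesis its increments $U_s := Y_s - Y_{s-1} = X_s - X_{s-1} - \beta_s$ satisfy $|U_s| \le \gamma_s$. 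Note that the event whose probability must be bounded is exactly $\{\, \exists\, s,\ 0 \le s \le n :\ Y_s \ge \alpha \,\}$ (the index $s=0$ never contributes, as $\alpha > 0$).

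Second, I would establish the conditional moment generating function estimate: for every $h > 0$ and every $1 \le s \le n$,
\[
\E\bigl(e^{h U_s} \,\big|\, \mathcal{F}_{s-1}\bigr) \ \le\ \exp\!\bigl(h^2 \gamma_s^2 / 2\bigr).
\]
This is the conditional form of Hoeffding's lemma: since $U_s$ takes values in an interval of length at most $2\gamma_s$ and has nonpositive conditional mean $\mu_s := \E(U_s \mid \mathcal{F}_{s-1})$, the usual convexity argument gives $\E(e^{h(U_s - \mu_s)} \mid \mathcal{F}_{s-1}) \le \exp(h^2 (2\gamma_s)^2 / 8) = \exp(h^2 \gamma_s^2/2)$, and multiplying by $e^{h \mu_s} \le 1$ (here $h > 0$ and $\mu_s \le 0$ are used) gives the claim. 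This is the one place where some care is needed --- a \emph{one-sided} drift condition combined with a conditional, rather than unconditional, expectation --- and it is in this sense the ``hard part'' of the lemma, although the estimate is entirely classical.

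Third, for a parameter $h > 0$ to be optimised later, I would introduce the process
\[
M_s := \exp\!\Bigl( h Y_s - \tfrac{h^2}{2} \sum_{t=1}^{s} \gamma_t^2 \Bigr), \qquad M_0 = 1 .
\]
Conditioning on $\mathcal{F}_{s-1}$ and using the estimate of the previous step,
\[
\E(M_s \mid \mathcal{F}_{s-1}) \ =\ M_{s-1}\, e^{-h^2 \gamma_s^2 / 2}\, \E\bigl(e^{h U_s} \mid \mathcal{F}_{s-1}\bigr) \ \le\ M_{s-1} ,
\]
so $(M_s)_{0 \le s \le n}$ is a nonnegative supermartingale. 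On the event $\{Y_s \ge \alpha\}$ one has $M_s \ge \exp\bigl(h\alpha - \tfrac{h^2}{2} \sum_{t=1}^{n} \gamma_t^2\bigr)$, so Doob's maximal inequality for nonnegative supermartingales (equivalently, optional stopping at the first time $Y_s \ge \alpha$, together with $\E M_0 = 1$) yields
\[
\Prob\bigl(\exists\, s \le n :\ Y_s \ge \alpha\bigr) \ \le\ \exp\!\Bigl( -h\alpha + \tfrac{h^2}{2} \sum_{t=1}^{n} \gamma_t^2 \Bigr) .
\]
Choosing $h = \alpha \big/ \sum_{t=1}^{n} \gamma_t^2$ makes the exponent equal to $-\alpha^2 \big/ \bigl(2 \sum_{t=1}^{n} \gamma_t^2\bigr)$, which is precisely the claimed bound. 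Everything after the reduction to $(Y_s)$ is routine, so no genuine obstacle arises beyond the conditional Hoeffding step; this is why the result can be, and usually is, simply quoted (here from \cite{PSW, NW-DE}).
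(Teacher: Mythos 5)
Your proof is correct: the reduction to the centred process, the conditional Hoeffding bound $\E(e^{hU_s}\mid\mathcal{F}_{s-1})\le e^{h^2\gamma_s^2/2}$ (using the one-sided drift via $e^{h\mu_s}\le 1$), the exponential supermartingale, Doob's maximal inequality, and the optimisation $h=\alpha/\sum_t\gamma_t^2$ are all sound, and you correctly note that $s=0$ is vacuous. The paper itself offers no proof of this lemma --- it is quoted verbatim from \cite{PSW,NW-DE} --- and your argument is precisely the standard supermartingale/Hoeffding proof given in those sources, so there is nothing to compare beyond noting that you supplied what the paper only cites.
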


Now, we are ready to prove the concentration for $N_{0,t}$.

\begin{theorem}\label{thm:X_1}
Let $B$ be a ball of volume $b = b(n)$ and $t = t(n) \in \nat$ be any function of $n$ such that $bt \to \infty$ as $n \to \infty$. Let $\omega =\omega (n)$ be any function tending to infinity together with $n$. The following holds with probability $1-o(n^{-3})$.
\begin{multline*}
N_{0,t} = N_{0,t}(B) = \frac {bt}{1+A_2p} + O((bt)^{1/2} (\omega \log n)^{3/2}) \\ = c_0 bt + O((bt)^{1/2} (\omega \log n)^{3/2}).
\end{multline*}
In particular, if $bt \gg \log^3 n$, then $N_{0,t} \sim c_0 bt$.
\end{theorem}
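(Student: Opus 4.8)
The plan is to apply the supermartingale inequality of Lemma~\ref{l:corollary} twice: once to the process $N_{0,t}$, to get an upper bound, and once to $-N_{0,t}$, to get a matching lower bound. The drift terms $\beta_t$ will be read off from the recurrence~\eqref{unscaled}, and the increment bounds $\gamma_t$ will come from Lemma~\ref{lem:out-degree}. The ``in particular'' clause is then immediate: when $bt\gg\log^3 n$ one may take $\omega\to\infty$ slowly enough that $(\omega\log n)^3=o(bt)$, so the error term is $o(bt)$.

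First I would record the deterministic one-step facts. Going from $G_{t'}$ to $G_{t'+1}$, the quantity $N_{0,t'}$ increases by one if $v_{t'+1}\in B$, and decreases by the number of in-degree-zero vertices of $B$ that receive an edge from $v_{t'+1}$. Every in-degree-zero vertex has a sphere of influence of volume $A_2/t'$, so all those spheres lie inside the ball $B^{+}$ obtained by thickening $B$ by the corresponding radius; a short computation gives $|B^{+}|=O(b)$ for $t'$ in the relevant range, and in any case $\sum_{t'\le t}|B^{+}|=O(bt)$. Let $\mathcal{G}$ be the event that (i) every out-degree is at most $\omega\log n$ (Lemma~\ref{lem:out-degree}) and (ii) the number of indices $t'\le t$ with $v_{t'}\in B^{+}$ is $O(bt)$ (Chernoff, as its mean is $O(bt)$; when $bt=O(\log^3 n)$ the whole statement is trivial, since $N_{0,t}\le|B\cap V_t|$ is already within the error bound). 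Then $\Prob(\mathcal{G})=1-o(n^{-3})$, and on $\mathcal{G}$ each one-step change of $N_{0,t}$ is at most $\omega\log n$ in absolute value and is nonzero for only $O(bt)$ of the steps $t'\le t$.

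Next comes the drift. From~\eqref{unscaled}, $\E(N_{0,t'+1}-N_{0,t'}\mid G_{t'})=b-N_{0,t'}\,pA_2/t'$, which still involves the random variable $N_{0,t'}$; to feed Lemma~\ref{l:corollary} one needs a priori control of $N_{0,t'}$ itself — an upper bound to bound the drift from below, and vice versa — so the two estimates must be bootstrapped. I would package this as a single stopping time: let $\tau$ be the first $s\le t$ at which $|N_{0,s}-c_0 b s|$ exceeds $K\,(bt)^{1/2}(\omega\log n)^{3/2}$ (for a constant $K$ to be fixed), or at which $\mathcal{G}$ fails. For $s<\tau$ we have $N_{0,s}=c_0 b s+O((bt)^{1/2}(\omega\log n)^{3/2})$, hence $\E(N_{0,s+1}-N_{0,s}\mid G_s)=c_0 b+O\!\left(b^{1/2}(\omega\log n)^{3/2}s^{-1/2}\right)$. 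Applying Lemma~\ref{l:corollary} to the stopped processes $N_{0,s\wedge\tau}$ and $-N_{0,s\wedge\tau}$, with appropriate $\beta_s$ of the form $\pm c_0 b+O(b^{1/2}(\omega\log n)^{3/2}s^{-1/2})$ and with $\gamma_s=2\omega\log n$ on the $O(bt)$ active steps and $\gamma_s=O(1)$ otherwise, one gets $\sum_{s\le t}\beta_s=\pm c_0 bt+O((bt)^{1/2}(\omega\log n)^{3/2})$ and $\sum_{s\le t}\gamma_s^2=O\!\left(bt(\omega\log n)^2\right)$. Taking $\alpha=\Theta((bt)^{1/2}(\omega\log n)^{3/2})$, the failure probability is $\exp\!\left(-\Theta\!\left(\alpha^2/\sum_s\gamma_s^2\right)\right)=\exp(-\Theta(\omega\log n))=o(n^{-3})$, which is below the threshold defining $\tau$; hence $\Prob(\tau\le t)=o(n^{-3})$, and on its complement the claimed estimate holds for $N_{0,t}$ (and indeed for $N_{0,s}$, all $s\le t$).

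I expect the main obstacle to be precisely the clause ``on $\mathcal{G}$ each one-step change is small and nonzero for only $O(bt)$ steps, so $\sum_s\gamma_s^2=O(bt(\omega\log n)^2)$''. The map $G_{t'}\mapsto N_{0,t'}$ is not Lipschitz — one new vertex can change many degrees — which is what blocks a direct Azuma–Hoeffding bound; and getting $(bt)^{1/2}$ rather than the easy $t^{1/2}$ in the error (essential when $b=o(1)$) forces the localization of the increments to the thickened ball $B^{+}$ together with the concentration of the number of active steps, all of which must be built into $\tau$ so that Lemma~\ref{l:corollary}, which requires deterministic $\gamma_s$, genuinely applies. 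The bootstrap coupling the two one-sided bounds is the standard, slightly fussy, companion complication.
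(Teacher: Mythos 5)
There is a genuine gap, and it is in the drift bookkeeping, not in the localization issue you flag at the end. Your plan applies Lemma~\ref{l:corollary} directly to $N_{0,s}$ and handles the self-referential drift $b-N_{0,s}pA_2/s$ by a stopping time with the \emph{uniform} threshold $K(bt)^{1/2}(\omega\log n)^{3/2}$. Writing $D_s=N_{0,s}-c_0bs$, the conditional drift of the deviation is exactly $-pA_2D_s/s$, so for $s<\tau$ all you can say is $\E(N_{0,s+1}-N_{0,s}\mid G_s)=c_0b+O\bigl((bt)^{1/2}(\omega\log n)^{3/2}/s\bigr)$; your claimed per-step error $O\bigl(b^{1/2}(\omega\log n)^{3/2}s^{-1/2}\bigr)$ is smaller by the factor $(t/s)^{1/2}$ and does not follow from the definition of $\tau$. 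Summing the correct bound over $s\le t$ gives $\sum_s\beta_s=c_0bt+O\bigl((bt)^{1/2}(\omega\log n)^{3/2}\log t\bigr)$, i.e.\ an extra $\log t$ factor in the error. That loss cannot be absorbed into $\omega$, because the theorem must hold for \emph{every} slowly growing $\omega$, and it would also degrade the ``in particular'' clause (you would need $bt\gg\log^5 n$ rather than $\log^3 n$), which is exactly the quantity fed into Theorem~\ref{thm:X_t} and then into the degree threshold of Theorem~\ref{thm:c_new}. Repairing this with a time-varying threshold $K(bs)^{1/2}(\omega\log n)^{3/2}$ would make your per-step drift bound correct, but then $\Pr(\tau\le t)=o(n^{-3})$ no longer follows from a single application of Lemma~\ref{l:corollary}, whose conclusion is uniform with one fixed $\alpha$; you would need an additional (e.g.\ dyadic-in-time) argument that your sketch does not supply.

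The paper sidesteps this entirely with an integrating factor: it works with $H(s,N_{0,s})=s^{pA_2}N_{0,s}-\frac{b\,s^{1+pA_2}}{1+pA_2}$, chosen so that $\E(\mathbf{w}_{s+1}-\mathbf{w}_s\mid G_s)\cdot\mathrm{grad}\,H(\mathbf{w}_s)=0$, i.e.\ the factor $s^{pA_2}$ exactly cancels the restoring term $-pA_2D_s/s$. Only second-order Taylor errors $O(s^{pA_2-1}\omega^2\log^2 n)$ remain as $\beta_s$, no bootstrap or stopping time is needed, and after applying Lemma~\ref{l:corollary} one divides by $t^{pA_2}$ to recover the stated $O((bt)^{1/2}(\omega\log n)^{3/2})$. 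Your other worry — that $\gamma_s$ must be deterministic while the ``active'' steps (those with $v_s$ near the thickened ball, probability $O(b+s^{-1})$) are random — is real but is present in the paper's own treatment as well, where it is handled by the same localization plus a ``self-correcting'' observation; so that is not where your proposal diverges fatally. The missing ingredient is the change of variables (or some equivalent device) that kills the first-order drift before the martingale inequality is invoked.
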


\begin{proof}
We first need to transform $N_{0,s}$ ($1 \le s \le t$) into something close to a martingale. It provides some insight if we define real function $f(x)$ to model the behaviour of the scaled random variable $N_{0,xt}/t$. If we presume that the changes in the function correspond to the expected changes of the random
variable (see~(\ref{unscaled})), we obtain the following differential equation
$$
f'(x) = b - f(x) \frac {pA_2}{x}
$$
with the initial condition $f(0)=0$. The general solution of this equation can be put in the form
$$
f(x) x^{p A_2} - \frac {b x^{1+pA_2}}{1+pA_2} = C.
$$
Consider the following real-valued function
\begin{equation}
H(x,y)=x^{pA_2}y-\frac{b x^{1+pA_2}}{1+pA_2}  \label{e:H}
\end{equation}
(note that we expect $H(s,N_{0,s})$ to be close to zero). Let $\mathbf{w}_{s}=(s,N_{0,s})$, and consider the sequence of random variables $(H(\mathbf{w}_{s}) : 1\leq s\leq t).$ The second-order partial derivatives of $H$ evaluated at $\mathbf{w}_s$ are all $O(s^{pA_2-1})$. Moreover, it follows from Lemma~\ref{lem:out-degree} that we may assume that 
\begin{equation}
|N_{0,s+1} - N_{0,s}| \le \omega \log n, \label{eq:max_change}
\end{equation}
and clearly $|N_{0,s+1} - N_{0,s}| \le s$ as there are at most $s$ isolated vertices at time $s$.
Therefore, we have
\begin{eqnarray}
H(\mathbf{w}_{s+1})-H(\mathbf{w}_{s}) &=& (\mathbf{w}_{s+1}-\mathbf{w}_{s})\cdot \mathrm{grad}\mbox{ } H(\mathbf{w}_{s}) \nonumber \\
&& + O \Big( \min \{ s^{pA_2-1} \omega^2 \log^2 n, s^{pA_2+1} \} \Big), 
\label{e:h_diff}
\end{eqnarray}
where \textquotedblleft $\cdot $\textquotedblright\ denotes the inner product and $\mathrm{grad}\mbox{ } H(\mathbf{w}_{s})=(H_x(\mathbf{w}_s),H_y(\mathbf{w}_s))$.

Observe that from our choice of $H$, we have that
$$
\mathbb{E}(\mathbf{w}_{s+1}-\mathbf{w}_{s}~|~G_{s})\cdot \mbox{\ \textrm{grad} }H(\mathbf{w}_{s})=0.
$$
Hence, taking the expectation of~(\ref{e:h_diff}) conditional on $G_{s}$, we obtain that
$$
{\mathbb{E}}(H(\mathbf{w}_{s+1})-H(\mathbf{w}_{s})~|~G_{s})  = O(s^{pA_2-1} \omega^2 \log^2 n).
$$
From~(\ref{e:h_diff}) and~(\ref{eq:max_change}), noting that
$$
\mathrm{grad}\mbox{ }H(\mathbf{w}_{s})=\left(pA_2 s^{pA_2-1} N_{0,s}-bs^{pA_2}, s^{pA_2}\right),
$$
we have that
\begin{eqnarray*}
|H(\mathbf{w}_{s+1})-H(\mathbf{w}_{s}) -\beta_{s+1} | &=& O \Big(s^{pA_2} \omega \log n  + \min \{ s^{pA_2-1} \omega^2 \log^2 n, s^{pA_2+1} \}  \Big) \\
&=&  O (s^{pA_2} \omega \log n).
\end{eqnarray*}
(Indeed, if $s \le \omega \log n$, then $s^{pA_2+1} \le s^{pA_2} \omega \log n$; otherwise, $s^{pA_2-1} \omega^2 \log^2 n \le s^{pA_2} \log n$.)

Our goal is to apply Lemma~\ref{l:corollary} to the sequence $(H(\mathbf{w}_{s}):1\leq s\leq t)$ with 
\begin{eqnarray*}
\beta_{s} &=& C s^{pA_2-1} \omega^2 \log^2 n,\\
\gamma_{s} &=& C s^{pA_2} \omega \log n,
\end{eqnarray*}
where $C$ is a sufficiently large constant, to get an upper bound for $H(\mathbf{w}_{s})$. A symmetric argument applied to $(-H(\mathbf{w}_{s}):1\leq s\leq t)$ will give us the desired lower bound so let us concentrate on the upper bound. The bounds for $\beta_{s+1}$ and $\gamma_{s+1}$ we derived above are universal; however, typically vertex $v_s$  lies far away from the ball $B$ so that $N_{0,s}$ is not affected. This certainly happens if the distance from the ball $B$ to $v_s$ is more than the radius of the ball of volume $A_2/s$, and so this situation occurs with probability $1-O(b + s^{-1})$. Moreover, if this happens and $H(\mathbf{w}_s) \ge 0$, then $H(\mathbf{w}_s)$ decreases (it can be viewed as some kind of ``self-correcting'' behaviour); hence, since we aim for an upper bound, we may assume that $H(\mathbf{w}_s)$ does not change. It follows that 
\begin{eqnarray}
\sum_{s=1}^t \beta_s &=& O \left( \sum_{s=1}^t s^{pA_2-1} \omega^2 \log^2 n \cdot (b + s^{-1}) \right) \label{eq:beta} \\
&=& O \left( b\ \omega^2 \log^2 n \sum_{s=1}^t s^{pA_2-1}  \right) + O \left( \omega^2 \log^2 n \sum_{s=1}^t s^{pA_2-2}  \right) \nonumber \\
&=& O \left( b\ t^{pA_2} \omega^2 \log^2 n \right) + O \left( t^{pA_2-1} \omega^2 \log^2 n \right) = O \left( b\ t^{pA_2} \omega^2 \log^2 n \right), \nonumber
\end{eqnarray}
since it is assumed that $b t \to \infty$. Similarly, we get that
\begin{equation}
\sum_{s=1}^t \gamma_s^2 = O \left( \sum_{s=1}^t (s^{pA_2} \omega \log n)^2 \cdot (b + s^{-1}) \right) = O \left( b\ t^{1+2pA_2} \omega^2 \log^2 n \right). \label{eq:gamma}
\end{equation}
Finally, we are ready to apply Lemma~\ref{l:corollary} with $\alpha = b^{1/2} t^{1/2+pA_2} (\omega \log n)^{3/2}$ to obtain that with probability $1-o(n^{-3})$, 
$$
|H(\mathbf{w}_{t})-H(\mathbf{w}_{0})| = O(\alpha) = O(b^{1/2} t^{1/2+pA_2} (\omega \log n)^{3/2}).
$$
As $H(\mathbf{w}_{0})=0$, it follows from the definition (\ref{e:H}) of the function $H$, that with the desired probability
\begin{equation*}
N_{0,t}=\frac{bt}{1+pA_2}+O((bt)^{1/2} (\omega \log  n)^{3/2}),
\end{equation*}
which finishes the proof of the theorem.
\end{proof}

We may repeat (recursively) the argument as in the proof of Theorem~\ref{thm:X_1} for $N_{i,t}$ with $i\geq 1$. Since the expected change for $N_{i,t}$ is slightly different now (see~(\ref{unscaled1})), we obtain our result by considering the following function:
$$
H(x,y)=x^{p(A_1 i +A_2)}y-c_{i-1} \frac{p (A_1 (i-1)+A_2)}{1+p(A_1 i + A_2)} x^{1+p(A_1 i + A_2)}.
$$
Moreover, in bounding $\sum \beta_s$ and $\sum \gamma_s^2$ (see~(\ref{eq:beta}) and~(\ref{eq:gamma})) we need $b$ to be of order at least $(A_1 i + A_2)/t$; say, $bt \gg i$.  
Other than these minor adjustments, the argument is similar as in the case $i=0$, and we get the following result. Note that the conclusion (the last claim) follows as 
$$
c_i bt = \Theta( i^{-1-1/(pA_1)} bt) = \Theta( i (bt)^{1/2} i^{-2-1/(pA_1)} (bt)^{1/2}) \gg i (bt)^{1/2} (\log n)^{3/2},
$$
provided $bt \gg i^{4+2/(pA_1)} \log^3 n$.

\begin{theorem}\label{thm:X_t}
Let $B$ be a ball of volume $b = b(n)$, $t = t(n) \in \nat$, and $i_f = i_f(n) \in \nat$ be any functions of $n$ such that $bt \gg i_f$. Let $\omega =\omega (n)$ be any function tending to infinity together with $n$. The following holds with probability $1-o(n^{-2})$. For any $0 \le i \le i_f$,
$$
N_{i,t} = N_{i,t}(B) = c_i bt + O(i (bt)^{1/2} (\omega \log n)^{3/2}).
$$
In particular, if $bt \gg i^{4+2/(pA_1)} \log^3 n$, then $N_{i,t} \sim c_i bt$.
\end{theorem}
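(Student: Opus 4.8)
The plan is to prove Theorem~\ref{thm:X_t} by induction on the in-degree $i$, running at each level the same supermartingale argument that establishes Theorem~\ref{thm:X_1}. The base case $i=0$ is Theorem~\ref{thm:X_1} itself. For the inductive step fix $i\ge 1$ and assume the conclusion at level $i-1$; since Lemma~\ref{l:corollary} is a maximal inequality, the proof of Theorem~\ref{thm:X_1} in fact yields its estimate for \emph{all} $s\le t$ simultaneously, so we may take the inductive hypothesis in the strong form $N_{i-1,s}=c_{i-1}bs+O\!\big((i-1)(bs)^{1/2}(\omega\log n)^{3/2}\big)$ uniformly over $1\le s\le t$ — which is precisely what level $i$ needs as input. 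Mimicking the derivation of~\eqref{e:H}, we model $N_{i,xt}/t$ by the solution of the linear ODE read off from~\eqref{unscaled1} after replacing $N_{i-1,\cdot}$ by its concentrated value $c_{i-1}b\,(\cdot)$; this singles out the invariant $H$ displayed just above the theorem, and we track the sequence $H(\mathbf w_s)$ with $\mathbf w_s=(s,N_{i,s})$, which we expect to stay near $0$.

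The heart of the step then repeats the four moves of the $i=0$ proof with $pA_2$ replaced throughout by $p(A_1i+A_2)$. First, Lemma~\ref{lem:out-degree} gives the deterministic one-step bound $|N_{i,s+1}-N_{i,s}|\le\omega\log n$ (and $\le s$), valid with probability $1-o(n^{-3})$. Second, a two-term Taylor expansion of $H(\mathbf w_{s+1})-H(\mathbf w_s)$ as in~\eqref{e:h_diff} — using that the second-order partials of $H$ at $\mathbf w_s$ are $O(s^{p(A_1i+A_2)-1})$ — together with the identity $\E(\mathbf w_{s+1}-\mathbf w_s\mid G_s)\cdot\mathrm{grad}\,H(\mathbf w_s)=p(A_1(i-1)+A_2)\,s^{p(A_1i+A_2)-1}\big(N_{i-1,s}-c_{i-1}bs\big)$ (the analogue of the vanishing-drift identity, now with a residual governed by the level-$(i-1)$ error) yields $\beta_{s+1}=O(s^{p(A_1i+A_2)-1}\omega^2\log^2 n)+O\!\big(i(i-1)\,s^{p(A_1i+A_2)-1}(bs)^{1/2}(\omega\log n)^{3/2}\big)$ and $\gamma_{s+1}=O(s^{p(A_1i+A_2)}\omega\log n)$, the factor $p(A_1(i-1)+A_2)=O(i)$ being responsible for the new power of $i$ in $\beta_{s+1}$. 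Third, the same ``self-correcting'' observation as in the $i=0$ case lets us discard the steps in which $v_s$ lands far from $B$ (an event of probability $1-O(b+s^{-1})$), so that $\sum_s\beta_s$ and $\sum_s\gamma_s^2$ are estimated exactly as in~\eqref{eq:beta} and~\eqref{eq:gamma}. Fourth, applying Lemma~\ref{l:corollary}, and its mirror image to $-H$, with $\alpha=b^{1/2}t^{1/2+p(A_1i+A_2)}(\omega\log n)^{3/2}$ (the same choice as for $i=0$, which makes the exponential bound $o(n^{-3})$), and dividing the resulting estimate $|H(\mathbf w_t)|=O\!\big(\sum_s\beta_s+\alpha\big)$ by $H_y(\mathbf w_t)=t^{p(A_1i+A_2)}$, we obtain $N_{i,t}=c_ibt+O\!\big(i(bt)^{1/2}(\omega\log n)^{3/2}\big)$ at level $i$, with failure probability $o(n^{-3})$. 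A union bound over $0\le i\le i_f$ — note that $bt\gg i_f$ together with $b\le 1$ and $t\le n$ forces $i_f$ to be $o(n)$ — then gives the stated $1-o(n^{-2})$, while the ``in particular'' clause is the computation already displayed in the text: $c_ibt=\Theta\!\big(i^{-1-1/(pA_1)}bt\big)\gg i(bt)^{1/2}(\log n)^{3/2}$ as soon as $bt\gg i^{4+2/(pA_1)}\log^3 n$.

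The one genuinely new point — and the place I expect the argument to need real care — is verifying that the error stays \emph{linear} in $i$ rather than compounding over the levels. The extra drift contributed at level $i$, namely $\sum_s O(b+s^{-1})\,p(A_1(i-1)+A_2)\,s^{p(A_1i+A_2)-1}\,\kappa(i-1)(bs)^{1/2}(\omega\log n)^{3/2}$ with $\kappa$ the inductive constant, after summation (using $\sum_{s\le t}s^{p(A_1i+A_2)-1/2}\sim t^{p(A_1i+A_2)+1/2}/(p(A_1i+A_2)+\tfrac12)$) is at most $\rho_i\,\kappa(i-1)(bt)^{1/2}(\omega\log n)^{3/2}\,t^{p(A_1i+A_2)}$ up to lower-order terms, where $\rho_i=\dfrac{p(A_1(i-1)+A_2)}{p(A_1i+A_2)+\tfrac12}<1$: the linear-in-$i$ amplification from the coefficient $p(A_1(i-1)+A_2)$ is \emph{exactly} cancelled by the $1/(p(A_1i+A_2)+\tfrac12)$ produced by the contraction rate of the dynamics, and moreover $1-\rho_i=\frac{pA_1+1/2}{p(A_1i+A_2)+1/2}=\Theta(1/i)$, so $(1-\rho_i)(i-1)$ stays above an absolute positive constant for every $i\ge 2$. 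Hence, after dividing by $t^{p(A_1i+A_2)}$, the error at level $i$ is bounded by a \emph{contracted} copy $\rho_i\kappa(i-1)(bt)^{1/2}(\omega\log n)^{3/2}$ of the level-$(i-1)$ error plus an $O(1)$ multiple of the base fluctuation $(bt)^{1/2}(\omega\log n)^{3/2}$ (plus lower-order terms that are dominated in every relevant regime, including the trivial one $bt\le\omega\log n$, where the claimed error already exceeds $N_{i,t}\le t$); choosing $\kappa$ once and for all large enough — and checking the first couple of values of $i$ by hand, where $(1-\rho_i)(i-1)$ is not yet bounded below — closes the induction with error $O\!\big(i(bt)^{1/2}(\omega\log n)^{3/2}\big)$.
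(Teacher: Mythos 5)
Your overall strategy coincides with the paper's: the paper proves this theorem by recursively rerunning the supermartingale argument of Theorem~\ref{thm:X_1} with the modified function $H$, and your induction on $i$ — including the explicit $\rho_i$-cancellation showing the error stays linear in $i$ — fills in bookkeeping that the paper leaves implicit. Two points in your write-up, however, are genuinely off, and both sit at places you lean on.

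First, the ``strong form'' of the inductive hypothesis is not justified as stated. Lemma~\ref{l:corollary} does control $H(\mathbf{w}_s)$ for all $s\le t$ simultaneously, but with the single choice $\alpha=b^{1/2}t^{1/2+p_{i-1}}(\omega\log n)^{3/2}$, where $p_{i-1}:=p(A_1(i-1)+A_2)$; translating back to $N_{i-1,s}$ requires dividing by $s^{p_{i-1}}$, so at an intermediate time $s<t$ a single application only gives an error of order $(bs)^{1/2}(\omega\log n)^{3/2}(t/s)^{1/2+p_{i-1}}$, not $(bs)^{1/2}(\omega\log n)^{3/2}$. This is not cosmetic: feeding the inflated bound into your drift sum replaces $\rho_i<1$ by a factor of order $p(A_1(i-1)+A_2)/(pA_1)=\Theta(i)$, so the recursion becomes $\mathrm{err}_i\lesssim i\cdot\mathrm{err}_{i-1}$ and the error compounds factorially rather than linearly — exactly the collapse of the cancellation you identify as the crux. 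The repair is routine but must be stated: invoke the level-$(i-1)$ estimate at dyadic times $2^j\le t$ (each instance failing with probability $o(n^{-3})$, and there are only $O(\log n)$ of them), which yields the scale-correct uniform-in-$s$ input your argument actually needs.

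Second, you never use the hypothesis $bt\gg i_f$ where it is really needed. For $i\ge 1$, step $s+1$ can change $N_{i,s}$ only if $v_{s+1}$ lands in the union of the spheres of influence of vertices in $B$ of in-degree $i-1$ or $i$; these spheres have volume $\Theta((A_1i+A_2)/s)$, so the relevant-step probability is $O(b+i/s)$, not $O(b+s^{-1})$. Consequently $\sum_s\beta_s$ acquires an extra term $O(i\,t^{p_i-1}\omega^2\log^2 n)$ and $\sum_s\gamma_s^2$ an extra $O(i\,t^{2p_i}\omega^2\log^2 n)$, where $p_i:=p(A_1i+A_2)$, and it is precisely the assumption $bt\gg i$ that lets the $b$-terms dominate so that the analogues of~\eqref{eq:beta} and~\eqref{eq:gamma} survive; this is the one adjustment the paper singles out (``we need $b$ to be of order at least $(A_1i+A_2)/t$''). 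Your use of $bt\gg i_f$ only to get $i_f=o(n)$ for the union bound misses this. (Minor further remarks: the inherited drift $p(A_1(i-1)+A_2)s^{p_i-1}(N_{i-1,s}-c_{i-1}bs)$ is an exact conditional-expectation identity and should not carry your $O(b+s^{-1})$ factor — your subsequent summation in fact drops it, so the arithmetic is unaffected — and the second-order partials of $H$ carry extra factors of $i$, harmless here but not the literal $O(s^{p_i-1})$ you quote.)
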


Finally, we can move to the proof of Theorem~\ref{thm:c_new}. 

\begin{proof}[Proof of Theorem~\ref{thm:c_new}]
Let us fix any vertex $v$ for which 
$$
\deg^-(v,n)=k=k(n) \geq (\omega \log n)^{4 + (4pA_1+2)/(pA_1(1-pA_1))}.
$$
Based on Theorem~\ref{degconc}, as we aim for the statement that holds with probability $1-o(n^{-1})$, we may assume that for all values of $t$ such that 
$$
n \left(\frac{\omega \log n}{k}\right)^{\frac{1}{p A_1}} =: T_v \le t \le n,
$$
we have
$$
\deg^-(v,t) \sim k \left(\frac{t}{n}\right)^{p A_1}.
$$
For any $\ell \in \nat \cup \{0\}$, let 
$$
t_\ell = 2^\ell T_v, \qquad b_\ell = A_1 k  t_\ell^{pA_1-1} n^{-pA_1},
$$
$B_\ell$ be the ball around $v$ of volume $b_\ell$, and $L$ be the smallest integer $\ell$ such that $t_\ell \ge n$. In fact, we will assume that $t_L = n$, as we may adjust $\omega$ (that is, multiply by a constant in $(1/2,1)$), if needed. Let $t_v := n (\omega \log n)^{-1/(pA_1)}$; since $k \ge (\omega \log n)^2$, we have $T_v \le t_v \le n$. Let $L'$ be the smallest integer $\ell$ such that $t_\ell \ge t_v$. 

Times $t_0 = T_v$, $t_{L'} = \Theta(t_v)$, and $t_L = n$ are important stages of the process; vertex $v$ has, respectively, degree $(1+o(1)) \omega \log n$, $\Theta(k / (\omega \log n))$, and $k$. Note that at time $t_\ell$ (for any $0 \le \ell \le L$) the sphere of influence of $v$ has volume $(1+o(1)) b_\ell$. Moreover, based on Corollary~\ref{cor:upper} (applied with, say, $\sqrt{\omega}$ instead of $\omega$) we may assume, since we aim for the statement that holds with probability $1-o(n^{-1})$, that any vertex $v_i$ born after time $T_v$ satisfies (for any $T_v \le t \le n$)
\begin{equation}
\deg(v_i,t) \le \sqrt{\omega} \log n \left( \frac ti \right)^{pA_1} = o\left( \omega \log n \left( \frac ti \right)^{pA_1} \right) = o(\deg(v,t)); \label{eq:bound_for_degree}
\end{equation}
as a result, the sphere of influence of $w$ has negligible volume comparing to the one of $v$. 

\bigskip

We will independently prove an upper bound and a lower bound of $c_{new}(v,n)$. In order to do it, we need to estimate $|E_{new}(N^-(v,n))|$, the number of directed edges from $u$ to $w$, where both $u$ and $w$ are neighbours of $v$ born after time $T_v$.

\medskip

\noindent \emph{Proof of $c_{new}(v,n) = O(1/k)$}: Suppose that a neighbour $w$ of $v$ lies in $B_{\ell-1} \setminus B_\ell$ for some $\ell$. An easy but an important observation is that at any time $t \ge t_{\ell+1}$, the sphere of influence of $v$ is completely disjoint from the one of $w$. Hence, the number of edges to $w$ that contribute to $c_{new}(v,n)$ can be upper bounded by $\deg^-(w, t_{\ell+1})$. It follows that
\begin{eqnarray*}
|E_{new}(N^-(v,n))| &\le& \sum_{\ell=1}^{L-2} \sum_{w \in B_{\ell-1} \setminus B_\ell} \deg^-(w, t_{\ell+1}) + \sum_{w \in B_{L-2}} \deg^-(w, t_{L}) \\
&\le& \sum_{\ell=1}^{L-1} \sum_{w \in B_{\ell-1}} \deg^-(w, t_{\ell+1}).
\end{eqnarray*}
Let $i_f = (\omega \log n)^{1/(1-pA_1)}$. We will independently deal with the largest balls, namely $B_{\ell}$ for $\ell < L'$; for the remaining ones, we will deal with vertices of degree more than $i_f$ before analyzing the contribution from low degree ones. In other words, we are going to show that each of the following three random variables is of order at most $k$:
\begin{eqnarray*}
\alpha &=& \sum_{\ell=1}^{L'-1} \sum_{w \in B_{\ell-1}} \deg^-(w, t_{\ell+1}), \\ 
\beta &=& \sum_{\ell=L'}^{L-1} \sum_{\substack{w \in B_{\ell-1}\\\deg^-(w, t_{\ell+1}) \le i_f}} \deg^-(w, t_{\ell+1}), \\ 
\gamma &=& \sum_{\ell=L'}^{L-1} \sum_{\substack{w \in B_{\ell-1}\\\deg^-(w, t_{\ell+1}) > i_f}} \deg^-(w, t_{\ell+1}).
\end{eqnarray*}
The conclusion will follow immediately as $|E_{new}(N^-(v,n))| \le \alpha + \beta + \gamma$.

In order to bound $\alpha$, we only need to use~(\ref{eq:bound_for_degree}). Let $E$ be the event that all the vertices satisfy~(\ref{eq:bound_for_degree}) which holds with probability $1-o(n^{-1})$. It follows that 
$$
\E ( \deg^-(v_i, t_{\ell+1}) ) \le \sqrt{\omega} \log n \left( \frac ti \right)^{pA_1} \cdot \Prob (E) + n \cdot \Prob(E^C) \sim  \sqrt{\omega} \log n \left( \frac ti \right)^{pA_1}
$$ 
and so
\begin{align*}
\E (\alpha) &= \sum_{\ell=1}^{L'-1} b_{\ell-1} \sum_{i = 1}^{t_{\ell+1}}  \E (\deg^-(v_i, t_{\ell+1}))  \\
&\le (1+o(1)) \sum_{\ell=1}^{L'-1} b_{\ell-1} \sum_{i = 1}^{t_{\ell+1}} \sqrt{\omega} \log n \left( \frac {t_{\ell+1}}{i} \right)^{pA_1}  \\ 
&= \sum_{\ell=1}^{L'-1} \sqrt{\omega} \log n \ b_{\ell-1} \ t_{\ell+1}^{pA_1} \sum_{i = 1}^{t_{\ell+1}} i^{-pA_1} \\
&= \sum_{\ell=1}^{L'-1} \Theta \left( \sqrt{\omega} \log n \ b_{\ell-1} \ t_{\ell+1} \right)  \\ 
&= \sum_{\ell=1}^{L'-1} \Theta \left( \sqrt{\omega} \log n \ k \ \left( \frac {t_{\ell+1}}{n} \right)^{pA_1} \right) \\
&= \Theta \left( \sqrt{\omega} \log n \ k \ \left( \frac {t_{L'}}{n} \right)^{pA_1} \right) \sum_{\ell=1}^{L'-1} 2^{-\ell pA_1} \\
&= \Theta \left( \sqrt{\omega} \log n \ k \ \left( \frac {t_{L'}}{n} \right)^{pA_1} \right) = o(k).
\end{align*}
%\pt{Let us recall that we assume some deterministic bound for $\deg^-(v_i, t_{\ell+1})$---see~(\ref{eq:bound_for_degree})---so the only source of randomness in this coupling comes from the distribution of points.}
The fact that, with the desired probability, $\alpha = O(k)$ follows from a standard martingale argument (for example, one could use Lemma~\ref{l:corollary}). 

Similarly, we can deal with $\gamma$. It follows from~(\ref{eq:bound_for_degree}) that no vertex born after time
\begin{multline*}
\left( \frac {\sqrt{\omega} \log n}{i_f} \right)^{1/(pA_1)} t_{\ell+1} \le (\sqrt{\omega} \log n)^{(1-1/(1-pA_1))/(pA_1)} \ t_{\ell+1} \\ = (\sqrt{\omega} \log n)^{-1/(1-pA_1)} \ t_{\ell+1}
\end{multline*}
can satisfy $\deg^-(w, t_{\ell+1}) > i_f$. Hence, 
\begin{multline*}
\E (\gamma) = \sum_{\ell=L'}^{L-1} b_{\ell-1} \sum_{i = 1}^{(\sqrt{\omega} \log n)^{\frac{-1}{1-pA_1}} t_{\ell+1}}  \E ( \deg^-(v_i, t_{\ell+1}) ) ~~\le~~ \sum_{\ell=L'}^{L-1} \Theta \left( b_{\ell-1} \ t_{\ell+1} \right)  \\ 
= \sum_{\ell=L'}^{L-1} \Theta \left( k \ \left( \frac {t_{\ell+1}}{n} \right)^{pA_1} \right) = \Theta \left( k \right) \sum_{\ell=0}^{L'-1} 2^{-\ell pA_1} = O(k).
\end{multline*}

Finally, we need to deal with $\beta$. This time, we need to use Theorem~\ref{thm:X_t} to count (independently) the number of vertices in $B_{\ell-1}$ of a certain degree. We may apply this theorem as for any $L' \le \ell \le L-1$, we have
\begin{eqnarray*}
b_{\ell-1} t_{\ell+1} &\ge& b_{L'-1} t_{L'+1} = \Theta \left( k \left( \frac {t_v}{n} \right)^{pA_1} \right) = \Theta \left( \frac {k}{\omega \log n} \right) \\
&=& \Omega \left( (\omega \log n)^{3 + (4pA_1+2)/(pA_1(1-pA_1))} \right) \\
&=& \Omega \left( (\omega \log n)^3 \ i_f^{(4pA_1+2)/(pA_1)} \right) \\
&\gg& i_f^{4+2/(pA_1)} \ \log^3 n,
\end{eqnarray*}
since $k \ge (\omega \log n)^{4 + (4pA_1+2)/(pA_1(1-pA_1))}$. 
(In fact, this is the main bottleneck that forces us to assume that $k$ is large enough.)
We get the following:
\begin{eqnarray*}
\beta &=& \sum_{\ell=L'}^{L-1} \sum_{i=1}^{i_f} i N_{i,t_{\ell+1}}(B_{\ell-1}) ~~=~~ (1+o(1)) \sum_{\ell=L'}^{L-1} \sum_{i=1}^{i_f} i c_i b_{\ell-1} t_{\ell+1} \\
&=& \Theta \left( \sum_{\ell=L'}^{L-1} b_{\ell-1} t_{\ell+1} \sum_{i=1}^{i_f} i^{-1/(pA_1)} \right) ~~=~~ \Theta \left( \sum_{\ell=L'}^{L-1} b_{\ell-1} t_{\ell+1} \right) = O(k),
\end{eqnarray*}
as argued before.

\medskip

\noindent \emph{Proof of $c_{new}(v,n) = \Omega(1/k)$}: The lower bound is straightforward. Clearly, $B_{L+1}$ is contained in the sphere of influence of vertex $v$ not only at time $n$ but, in fact, at any point of the process. It follows from Theorem~\ref{thm:X_t} that the number of vertices of in-degree 1 that lie in $B_{L+1}$ is $\Theta(b_{L+1} n) = \Theta(k)$. Moreover, their in-neighbours are also contained in the sphere of influence of $v$ and, with the desired probability, say, half of them are born after time $T_v$. In order to avoid complications with events not being independent, we can select a family of $\Theta(k)$ directed edges such that no endpoint belongs to more than one edge. Now, each of these selected edges have both endpoints in the in-neighbourhood of $v$ with probability $p^2$, independently on the other edges. Hence, the expected number of edges in $|E_{new}(N^-(v,n))|$ is $\Omega(1/k)$ and the conclusion follows easily from the Chernoff bound. 
\end{proof}

\subsection{Proof of Theorem~\ref{thm:negative}} \label{sec:negative}

We move immediately to the proof. 

\begin{proof}[Proof of Theorem~\ref{thm:negative}]
Let $1 \le \alpha=\alpha(n) = n^{o(1)}$ and $2 \le \beta=\beta(n) =O(\log n)$ be any functions of $n$. We will tune these functions at the end of the proof for a specific value of $k$, depending on the case (i), (ii), or (iii) we deal with. Pick any point $s$ in $S$ and consider two balls, $B_1$ and $B_2$, centered at $s$; the first one of volume $C_1$ and the second one of volume $C_2$, where
$$
C_1 = \frac {A_2}{10 n}, \quad \text{and} \quad C_2 = \frac {2(A_1+A_2) \beta}{ n / (2 \alpha) }. 
$$
Let $v$ be the first vertex that lands in $B_1$. We independently consider three phases. 

\bigskip

\noindent \textbf{Phase 1}: Up to time $T_1 = n/\alpha$ when $\deg^-(v,T_1) = \Theta(\beta)$. 

\smallskip

\noindent Consider the time interval between $n/(2\alpha)$ and $n/\alpha$. 
We are interested in the following event $D$: during the time interval under consideration, $\beta$ vertices land in $B_1$ but no vertex lands in $B_2 \setminus B_1$. Clearly, 
$$
\Prob (D) = \binom{n/(2\alpha)}{\beta} C_1^{\beta} (1-C_2)^{n/(2 \alpha) - \beta} \ge \left( \frac {nC_1}{3 \alpha\beta} \right)^{\beta} \exp \left( - \frac {C_2 n}{\alpha} \right).
$$
Straightforward but important observations are that every vertex in $B_1$ is inside a ball around any other vertex in $B_1$ (balls have volumes at least $A_2/(n / \alpha) \ge A_2/n$,  deterministically); moreover, conditioning on $D$, during the whole time interval all balls around $\beta$ vertices in $B_1$ are contained in $B_2$ (balls have volumes at most $(A_1 \beta + A_2)/(n/(2\alpha))$). 

We condition on event $D$ and consider two scenarios that will be applied for two different ranges of $k$.

\textbf{Event $F_1$}: vertices in $B_1$ form a (directed) complete graph on $\beta$ vertices; in particular, $\deg^-(v, n/\alpha) = \beta-1$ and $c^-(v,n/\alpha) = 1$. It follows that
$$
\Prob (F_1 | D) = p^{\binom{\beta}{2}},
$$
and so
\begin{eqnarray*}
\Prob (D \wedge F_1) &\ge& \left( \frac {nC_1}{3 \alpha\beta} \right)^{\beta} \exp \left( - \frac {C_2 n}{\alpha} \right) p^{\binom{\beta}{2}} \\
&=& \exp \left( - \beta \log \left( 30 \alpha \beta / A_2 \right) - 4 (A_1 + A_2) \beta - \binom{\beta}{2} \log \left( 1/p \right) \right) \\
&\ge& \exp \left( - \beta \log \alpha - 2 \beta \log \log n - \beta^{2} \log \left( 1/p \right) \right) \\
&\ge& n^{-1/5-o(1)-1/5} \ge n^{-1/2},
\end{eqnarray*}
provided that
\begin{equation}\label{eq:conditions_for_beta-F1}
\max \left\{ \beta \log \alpha, \beta^2 \log \left( 1/p \right) \right\} \le \frac 15 \log n.
\end{equation}

\textbf{Event $F_2$}: the first $\beta p/8-1$ vertices that landed in $B_1$ right after $v$ connected to $v$ but the remaining $\beta(1-p/8)$ vertices did not do this; moreover, each of $\beta p/8-1$ neighbours of $v$ got connected to at least $\beta p /4$ other vertices. In particular, $\deg^-(v,n/\alpha) = \beta p/8-1$ and all neighbours $w$ of $v$ satisfy $\deg^-(w,n/\alpha) \ge \beta p / 4$. It follows that
\begin{eqnarray*}
\Prob (F_2 | D) &=& p^{\beta p/8-1} (1-p)^{\beta (1-p/8)} \prod_{i=1}^{\beta p/8} \Prob \Big( \bin (\beta - i, p) \ge \beta p / 4 \Big) \\
&\ge& [p(1-p)]^{\beta} \ \Prob \Big( \bin (\beta (1-p/8), p) \ge \beta p / 4 \Big)^{\beta p/8} \\
&\ge& \left[ \frac {p(1-p)}{2} \right]^{\beta},
\end{eqnarray*}
since $\E ( \bin (\beta (1-p/8), p) ) = \beta(1-p/8)p \ge \beta p / 2$. This time we get
\begin{multline*}
\Prob (D \wedge F_2) \ge \left( \frac {nC_1}{3 \alpha\beta} \right)^{\beta} \exp \left( - \frac {C_2 n}{\alpha} \right) \left[ \frac {p(1-p)}{2} \right]^{\beta} \\
= \exp \left( - \beta \log \left( 30 \alpha \beta / A_2 \right) - 4 (A_1 + A_2) \beta - \beta \log \left( \frac {2}{p(1-p)} \right) \right) \\
\ge \exp \left( - \beta \log \alpha - \beta \log \beta - O(\beta) \right) \\
\ge n^{-1/5-1/5-o(1)} \ge n^{-1/2},
\end{multline*}
provided that
\begin{equation}\label{eq:conditions_for_beta-F2}
\max \left\{ \beta \log \alpha, \beta \log \beta \right\} \le \frac 15 \log n.
\end{equation}

\bigskip

\noindent \textbf{Phase 2}: Between time $T_1 = n/\alpha$ and time $T_2$ when $\deg^-(v,T_2) \ge \omega \log n$ for some $\omega = \omega(n) \le \log \log n$ tending to infinity as $n \to \infty$. 

\smallskip

\noindent We assume that events $D$ and $F_2$ hold. Let $W$ be the set of the first $\beta p/8-1$ neighbours of $v$ considered in the previous phase. Using the same argument as in Lemma~\ref{thm:conc}, we are going to show that with probability at least $1/2$ for any $t$ in the time interval under consideration and any vertex $w \in W \cup \{v\}$,
$$
\deg^-(w,t) \sim \deg^-(w,n/\alpha) \left( \frac {t}{n/\alpha} \right)^{pA_1}.
$$
Let $\eps = 1 / (\omega \log \log n)$ and suppose that 
$$
\deg^-(v,T)=d \ge \beta p / 8 - 1. % \ge \omega^3 (\log \log n)^2 (\log \log \log n)
$$
Then, with `failing' probability $\exp(-\Omega(\eps^2 d))$, for some value of $t$, $T \le t  \le 2T$, 
$$
\left| \deg^-(v,t) - d \cdot \left( \frac tT \right)^{p A_1} \right| > \frac {5}{p A_1} \cdot \frac {t}{T} \ \eps.
$$
We will apply this bound for $T = 2^i n / \alpha$ for $0 \le i = O(\log \log n)$. Hence, the probability that we fail for some vertex (at some time $t$ between $T_1$ and $T_2$) is at most
\begin{multline*}
\frac {\beta p}{8} \ O(\log \log n) \ \exp(-\Omega(\eps^2 d)) \\ = O(\beta \log \log n) \exp \left(-\Omega \left( \frac {\beta}{(\omega \log \log n)^2} \right) \right) \le \frac 12,
\end{multline*}
provided that 
\begin{equation}\label{eq:conditions_for_beta-Phase2}
\beta \ge \omega^3 \ (\log \log n)^2 (\log \log \log n).
\end{equation}
The claim holds as the cumulative error term is
$$
(1 + O(\eps))^{O(\log \log n)} = 1 + O(\eps \log \log n) \sim 1.
$$

\bigskip

\noindent \textbf{Phase 3}: Between time $T_2$ and time $n$.

\smallskip

\noindent We assume that events $D$ and $F_2$ hold, and Phase 2 finished successfully (that is, concentration holds for all vertices in $W$). It follows immediately from Corollary~\ref{cor:tT} that with probability $1-o(n^{-1} \beta)$ for any $t$ in the time interval between $T_2$ and $n$, and any vertex $w \in W \cup \{v\}$,
$$
\deg^-(w,t) \sim \deg^-(w,n/\alpha) \left( \frac {t}{n/\alpha} \right)^{pA_1}.
$$

\bigskip

The conclusion is that with probability at least $n^{-1/2}/3$, for a given point $s$ in $S$, there exists vertex $v$ in $B_1$ that has $\Theta(\beta)$ in-neighbours in $B_1$. Moreover, between time $n/\alpha$ and $n$, the degree of these neighbours of $v$ are larger by a factor of at least $2+o(1)$ than the degree of $v$. It follows that in this time interval, the ball around $v$ is contained in all the balls of early neighbours of $v$. Conditioning on this event and assuming that, say, $\alpha \ge 2$, with probability at least $1-\beta \exp(-\Omega(\omega \log n)) \ge 1-n^{-1}$, each early neighbour has a positive fraction of neighbours of $v$ as its neighbours at time $n$. (Note that this time events are not independent but the failing probability is small enough for the union bound to be applied.) It follows that with probability at least $n^{-1/2}/4$, we have $c^-(v,n) = \Omega (\beta / k)$.

Finally, tessellate $S$ into $n^{1-o(1)}$ squares of volumes, say,
$$
C_3 = \frac {\omega^2 \log n}{ n / (2 \alpha) } = n^{-1+o(1)}, 
$$
as it is assumed that $\alpha = n^{o(1)}$, and take various $s$ to be the centers of the corresponding squares. Note that conditioning of all the phases to end up with success, balls of all vertices under consideration are contained in the square. Moreover, in order to decide if a given square is successful does not require to expose vertices outside of this square. Hence, the events associated with different squares are almost independent. Formally, one would need to use (in a straightforward way) the second moment method to show this claim. It follows that a.a.s.\ there is at least one square that is successful.

\bigskip

Now, we are ready to tune $\alpha$ and $\beta$ for a specific function $k$. 
For case (i), we take $\alpha = 1$ (that is, no phase 2 and 3) and $\beta = k$. It is straightforward to see that conditions~(\ref{eq:conditions_for_beta-F1}) are satisfied. 
For case (ii), we take 
$$
\beta = \frac {k}{5} \le \frac {\log n}{5 \log \log n} \quad \text{and} \quad \alpha = \left( \frac {k}{\beta} \right)^{1/(pA_1)} = 5^{1/(pA_1)} \ge 5.
$$
(This time, there is no phase 3.) Again, it is straightforward to see that conditions~(\ref{eq:conditions_for_beta-F2}) and~(\ref{eq:conditions_for_beta-Phase2}) are satisfied. 
For case (iii), we take 
\begin{eqnarray*}
\beta &=& \frac {pA_1}{5} \omega (\log \log n)^2 (\log \log \log n) \quad \text{and} \quad \\ \alpha &=& \left( \frac {k}{\beta} \right)^{1/(pA_1)} \le k^{1/(pA_1)} \le n^{\xi/(pA_1)}.
\end{eqnarray*}
(Clearly, $\alpha \gg 1$.) As usual, it is straightforward to see that conditions~(\ref{eq:conditions_for_beta-F2}) and~(\ref{eq:conditions_for_beta-Phase2}) are satisfied, and the proof is finished. 
\end{proof}

\subsection{Proof of Theorem~\ref{thm:average}} \label{sec:average}

We move immediately to the proof. 
\begin{proof}[Proof of Theorem~\ref{thm:average}]
Let $\omega = \omega(n) = \log^{o(1)} n$ be any function tending to infinity as $n \to \infty$ (arbitrarily slowly). First, note that a.a.s.
$$
|X_k| = \sum_{i=(1-\delta)k}^{(1+\delta)k}  \Theta( i^{-1-1/(pA_1)} n ) =  \Theta_{\delta}(n k^{-1/(pA_1)}),
$$
as the degree distribution of $G_n$ follows power law with exponent $1+1/(pA_1)$~\cite{spa1}. Let 
$$
T = T(n) := n \left( \frac {2 \omega \log n}{k} \right)^{1/(pA_1)}.
$$
Note that $T \ge n^{\eps/(pA_1)}$, as $k \le n^{pA_1-\eps}$. It follows from Theorem~\ref{degconc} that a.a.s., for each $v \in X_k$, 
$$
(1+o(1)) (1-\delta) (2 \omega \log n) \le \deg^-(v,T) \le (1+o(1)) (1+\delta) (2 \omega \log n).
$$
In particular, for $n$ large enough,
\begin{equation}\label{eq:deg}
\deg^-(v,T) > \omega \log n 
\end{equation}
(as $\delta < 1/2$) and so all old neighbours of $v$ are born before time $T$. 

\bigskip

We start from part (i). As we aim for the statement that holds for almost all vertices in $X_k$, we may concentrate on any vertex $v \in X_k$ that is born after time $nk^{-1/(pA_1)}/\omega = o(nk^{-1/(pA_1)})$ and simply ignore the remaining ones (as the number of them is negligible comparing to $|X_k|$). Since each in-neighbour $v_u$ of $v$ is also born after time $nk^{-1/(pA_1)}/\omega$, we can use Corollary~\ref{cor:upper} to be able to assume that for any $u \le t \le n$, 
$$
\deg(v_u,t) \le \omega \log n \left( \frac {t}{u} \right)^{pA_1} \le \omega \log n \left( \frac {t}{nk^{-1/(pA_1)}/\omega} \right)^{pA_1}.
$$
As a result, for any $T \le t \le n$, 
\begin{multline*}
\frac {|S(v_u,t)|}{|S(v,t)|} \le (1+o(1)) \frac { \omega \log n \left( \frac {t}{nk^{-1/(pA_1)}/\omega} \right)^{pA_1} }{\deg^-(v,T) (t/T)^{pA_1}} \\ \le (1+o(1)) \left( \frac {T}{nk^{-1/(pA_1)}/\omega} \right)^{pA_1} 
\sim 2 \omega^{pA_1 + 1} \log n \le \omega^{2} \log n.
\end{multline*}
(Here we used Theorem~\ref{degconc} and~\eqref{eq:deg}.) Moreover, we may ignore all vertices that have too many vertices that are too close to them at time $T$. Formally, we ignore all vertices $v$ that have at least $C = \lceil 8/(\eps p A_1) \rceil > e$ vertices in the ball of volume $B=1/(T \log^{\eps/2} n)$ around $v$ at time $T$. Indeed, suppose that $T$ points are placed independently and uniformly at random in $S$ (without generating the graph). The probability that a given point $v$ has too many points around is at most
$$
\binom{T}{C} B^C \le \left( \frac {eTB}{C} \right)^C \le (TB)^C = \log^{-\eps C/2} n = \log^{-4/(pA_1)} n. 
$$
Since the expected number of such points is at most 
\begin{eqnarray*}
T \log^{-4/(pA_1)} n &=& n k^{-1/(pA_1)} (2 \omega \log n)^{1/(pA_1)} \log^{-4/(pA_1)} n \\ 
&\le& n k^{-1/(pA_1)} \log^{-2/(pA_1)} n,
\end{eqnarray*}
it follows from Markov's inequality that a.a.s.\ there are at most 
$$n k^{-1/(pA_1)} \log^{-1/(pA_1)} n = o(|X_k|)$$ 
of them, as claimed. (In fact, $n k^{-1/(pA_1)} \log^{-1/(pA_1)} n = O(|X_k| / (\omega^2 \log n))$, which will be needed for part (ii).)

Our goal is to show that $c_{old}(v,n) = O(1/k)$. Since there are at most $C = O(1)$ close in-neighbours of $v$, their contribution to $c_{old}(v,n)$ is only $O(1/k)$ and so we need to concentrate on far in-neighbours of $v$. Let 
$$
\hat{T} := T \log^{(2+\eps)/(1-pA_1)} n,
$$
and note that
$$
\hat{T} = n \left( \frac {2 \omega \log n}{k} \right)^{1/(pA_1)} \log^{(2+\eps)/(1-pA_1)} n = o(n),
$$
assuming that $k \ge \omega^2 \log^{1+(2+\eps)pA_1/(1-pA_1)} n$, which we may by taking $\omega$ small enough. 
Let $u$ be any (far) in-neighbour of $v$ that is outside of the ball of volume $B$ around $v$ at time $T$. Note that 
\begin{eqnarray*}
|S(u,\hat{T})| &\le& (\omega^2 \log n) |S(v,\hat{T})| \\
&\sim& (\omega^2 \log n) (A_1 \deg^-(v,\hat{T}))/\hat{T} \\
&\le& (\omega^2 \log n) (4 A_1 \omega \log n) (\hat{T}/T)^{pA_1}/\hat{T} \\
&\le& (\omega^4 \log^2 n) \hat{T}^{pA_1-1} T^{-pA_1} \\
&=& (\omega^4 \log^2 n) (\log^{-(2+\eps)} n) / T \\
&=& 1/(T \log^{-\eps + o(1)} n) = o(B)
\end{eqnarray*}
and so also $|S(v,\hat{T})| = o(B)$, which implies that at time $\hat{T}$ spheres of influence of $u$ and $v$ are disjoint and will continue to shrink. As a result, the number of common neighbours of $v$ and $u$ is at most
$$
\deg^-(v,\hat{T}) = k (\hat{T}/n)^{pA_1} = o(k),
$$
and so the number of common neighbours of $v$ and its far neighbours is negligible. Part (i) holds.

\bigskip

The proof of part (ii) is almost the same so we only point out small adjustments that need to be implemented. It follows from Corollary~\ref{cor:main} that we may assume that $c^-(v,n) = O(\omega \log n / k)$ for any vertex $v \in X_k$. Hence, it is enough to show that all but at most $O(|X_k| / (\omega^2 \log n)) = O(n k^{-1/(pA_1)} / (\omega^2 \log n))$ vertices in $X_k$ have $c^-(v,n) = O(1/k)$. This time we can only ignore vertices born before time $nk^{-1/(pA_1)}/(\omega^2 \log n)$ which gives slightly weaker bound for the ratio of the volumes of influence of a neighbour of $v$ and $v$ itself:
$$
\frac {|S(u,t)|}{|S(v,t)|} \le \omega^3 \log^{1+pA_1} n.
$$
As a result, we need to define $\bar{T}$ as a counterpart of $\hat{T}$ as follows:
$$
\bar{T} := T \log^{(2+pA_1+\eps)/(1-pA_1)} n,
$$
and note that $\bar{T} = o(n)$, assuming the stronger lower bound for $k$. The rest of the proof is not affected.
\end{proof}

\section{Conclusion and future directions}\label{sec:conclusion}

In this paper, we analyzed the clustering properties of the SPA model. Namely, we proved that the average local clustering coefficient $C(d)$ asymptotically behaves as $d^{-1}$. Moreover, we were able to prove that not only the average but the individual local clustering coefficient of a vertex of degree $d$ behaves as $1/d$ if $d$ is large enough.

The behaviour $C(d) \propto d^{-1}$ is often observed in real-world networks. However, in some cases $d^{-\varphi}$ with $\varphi \neq 1$ is claimed. In random graph models of complex networks, it is usually the case that $C(d) \propto d^{-1}$. For example, such decay is proved in~\cite{krot2017local} for a variety of networks based on preferential attachment.  

As a future work, it would be interesting to obtain a precise constant $C$ in the expression $C(d) \sim C d^{-1}$. We believe that the value $C$ is defined by the parameters $p$ and $A_1$ of the model, similarly to what was previously observed for the general class of preferential attachment models~\cite{krot2017local}. 
This would provide an insight into percolation properties of the network. 
Namely, it was shown in~\cite{serrano2006clustering2} that  percolation properties of a network are defined by the type
(weak or strong) of its connectivity and the type of connectivity is defined by the constant $C$ (whether it is greater or smaller than one). 
Another interesting direction for future research is to analyze the nature of usually obtained value $\varphi = 1$ and try to modify the SPA model in order to make it more flexible and allowing to generate graphs with $\varphi \neq 1$. Similar results with $\varphi \neq 1$ were recently obtained for affiliation networks~\cite{bloznelis2017correlation}.

\section*{Funding}

This work was supported by Russian Foundation for Basic Research [18-31-00207];
%; Russian President grant [MK-527.2017.1]; 
and Natural Sciences and Engineering Research Council of Canada.

\bibliographystyle{plain}
\bibliography{SPA}

\end{document}